\theoremstyle{plain}
\newtheorem{thm}{Theorem}[section]
\newtheorem{cor}[thm]{Corollary}
\newtheorem{pro}[thm]{Proposition}
\newtheorem{lem}[thm]{Lemma}
\newtheorem{proposition-principale}[thm]{Proposition principale}
\newtheorem{thm-principal}{Main Theorem}
\theoremstyle{definition}
\newtheorem{que}{Question}[section]
\newtheorem{eg}[thm]{Example}
\newtheorem{rem}[thm]{Remark}
\newenvironment{defi-G}
{\noindent{\bf Definition.}\it}{\\}
\newenvironment{thm-M}
{\noindent{\bf Main Theorem.}\it }{}
\newenvironment{thm-A}
{\noindent{\bf Theorem A.--}\it }{}
\newenvironment{thm-AA}
{\noindent{\bf Theorem A'.}\it}{\\ }
\newenvironment{thm-B}
{\noindent{\bf Theorem B.--}\it}{\\ }
\newenvironment{thm-C}
{\noindent{\bf Theorem C.--}\it}{\\ }
\newenvironment{thm-D}
{\noindent{\bf Theorem D.--}\it}{\\ }
\newenvironment{thm-E}
{\noindent{\bf Theorem E.--}\it}{\\ }
\newenvironment{thm-BB}
{\noindent{\bf Theorem B'.}\it}
\def\C{\mathbf{C}}
\def\R{\mathbf{R}}
\def\Q{\mathbf{Q}}
\def\Z{\mathbf{Z}}
\def\N{\mathbf{N}}
\def\bfK{\mathbf{K}}
\def\bfk{\mathbf{k}}
\def\VR{{{\mathfrak{o}}_{\mathbf{K}}}}
\def\bfKbar{{\overline{\mathbf{K}}}}
\def\mK{{\mathfrak{m}}_\bfK}
\def\bfL{\mathbf{L}}
\def\VRL{{{\mathfrak{o}}_{\mathbf{L}}}}
\def\bfF{\mathbf{F}}
\DeclarePairedDelimiter\av{\vert}{\vert}
\DeclarePairedDelimiter\set{{\{}}{{\}}}
\def\vx{{{\mathbf{x}}}}
\def\vy{{{\mathbf{y}}}}
\def\vz{{{\mathbf{z}}}}
\def\vt{{{\mathbf{t}}}}
\def\vs{{{\mathbf{s}}}}
\def\vb{{{\mathbf{b}}}}
\def\P{\mathbb{P}}
\def\A{\mathbb{A}}
\def\Hyp{\mathbb{H}}
\def\U{{\mathcal{U}}}
\def\Aut{{\sf{Aut}}}
\def\Bir{{\sf{Bir}}}
\def\Diff{{\sf{Diff}}}
\def\Bij{{\sf{Bij}}}
\def\Per{{\mathrm{Per}}}
\def\Ker{{\mathrm{Ker}}}
\def\Lie{{\sf{Lie}}}
\def\red{{\mathrm{red}}}
\def\modulo{{\mathrm{mod}}\; } 
\def\vect{{\mathrm{vect}}}
\def\Isom{{\sf{Isom}}}
\def\NS{{\mathrm{NS}}}
\def\Supp{{\rm{Supp}}}
\def\PGL{{\sf{PGL}}\,}
\def\GL{{\sf{GL}}\,}
\def\SL{{\sf{SL}}\,}
\def\End{{\sf{End}}\,}
\def\Tr{{\sf{Tr}}}
\def\det{{\sf{det}}}
\newcommand{\Id}{{\rm id}}
\newcommand{\Idop}{{\rm Id}}
\def\Ind{{\text{Ind}}}
\def\Exc{{\text{Exc}}}
\def\Gm{{\mathbb{G}}_m}
\def\Ga{{\mathbb{G}}_a}
\def\dist{{\sf{dist}}}
\newcommand{\seunguk}[1]{{\color{green!80!black}*}\marginpar{\tiny  \color{green!80!black} SJ: #1}}
\newcommand{\serge}[1]{{\color{red}*}\marginpar{\tiny  \color{red} SC: #1}}
\numberwithin{equation}{section}       
\begin{document}

\setlength{\baselineskip}{0.56cm}        
%
%
\title[Orbits of automorphism groups of affine surfaces over $p$-adic fields]
{Orbits of automorphism groups of affine surfaces over $p$-adic fields}
\date{2023/2024}
\author{Serge Cantat, Seung uk Jang}
\address{CNRS, IRMAR - UMR 6625 \\ 
Universit{\'e} de Rennes 
\\ France}
\email{serge.cantat@univ-rennes.fr}
 \email{seung-uk.jang@univ-rennes1.fr}
%
%

%
%

%
%

\begin{abstract}
We study orbit closures and stationary measures for groups of automorphisms of $p$-adic affine surfaces. 
 \end{abstract}

\maketitle

\setcounter{tocdepth}{1}

\tableofcontents

\vfill

{\small{The research activities of the authors  are partially funded by the European Research Council (ERC GOAT 101053021).  }}

\pagebreak

\section{Introduction} 

\subsection{Affine varieties over local fields} If $V\subset \A^N$ is an affine variety defined over a ring $R$, we denote by $\Aut(V_R)$ the group of automorphisms of $V$ defined over $R$. Specifically, $f\colon V\to V$ is an element of $\Aut(V_R)$ if it is an automorphism and both $f$ and $f^{-1}$ are defined by formulas with coefficients in $R$. 
If $R$ is the valuation ring of a local field (see  Section~\ref{par:local_fields}), we shall endow $\A^N(R)= R^N$ with the product topology, and then $V(R)$ with the induced topology, which is independent of the embedding of $V$ into $\A^N$. 
The group $\Aut(V_R)$ determines a subgroup of ${\mathsf{Homeo}}(V(R))$, the group of homeomorphisms of~$V(R)$.

\subsection{Stationary measures} Let $T$ be a topological space. Let $\mu$ be a probability measure on ${\mathsf{Homeo}}(T)$. A stationary measure, with respect to $\mu$, is a probability measure $\nu$ on $T$ such that  $\int \nu(f^{-1}(B))\; d\mu(f)=\nu(B)$ for every Borel subset $B\subset T$. Let $\Gamma$ be a subgroup of ${\mathsf{Homeo}}(T)$. By definition, a stationary measure for $\Gamma$ is a stationary measure with respect to a probability measure $\mu$ on ${\mathsf{Homeo}}(T)$ whose support generates $\Gamma$.  Given a compact $\Gamma$-invariant set $K\subset T$, and a probability measure $\mu$ on $\Gamma$, there is always at least one $\mu$-stationary measure $\nu$ whose support is contained in $K$. 

\subsection{Orbit closures} An automorphism of an affine surface $X$ is said to be loxodromic when its (first) dynamical degree is greater than $1$: we refer to Section~\ref{par:automorphisms} for a description of the different types of automorphisms.
Our goal is to prove the following theorem. 
\smallskip

\begin{thm-A} 
Let $X$ be a normal affine surface defined over $\Z_p$, the ring of $p$-adic integers. 
Let $\Gamma$ be a subgroup of $\Aut(X_{\Z_p})$ containing 
a loxodromic element and a non loxodromic element of infinite order. Then, 
\begin{enumerate}[\rm(1)] 
\item $X(\Z_p)$ contains at most finitely many finite $\Gamma$-orbits;
\item if a  $\Gamma$-orbit is infinite, its closure is a clopen subset of $X(\Z_p)$;
\item the decomposition of the compact set $X(\Z_p)$ into closures of $\Gamma$-orbits is a partition into (at most) countably many clopen subsets and (at most) finitely many finite orbits;
\item this partition is finite if and only if  every finite orbit of $\Gamma$ in $X(\Z_p)$ is an isolated  subset of $X(\Z_p)$. 
\end{enumerate}
Moreover, if $\mu$ is a probability measure, whose support 
generates $\Gamma$, then each orbit closure $O$ is the support of a $\Gamma$-invariant probability measure, and this invariant measure is the unique stationary measure supported in $O$. 
\end{thm-A}

\smallskip

Parts of Theorem~A hold when $\Z_p$ is replaced by the valuation ring of an arbitrary $p$-adic local field, see in particular Theorem~D in Section~\ref{par:proof_of_thmA_part1}.
The important example of Markov surfaces is  studied in Section~\ref{par:markov_surfaces} (see Theorem~E in Section~\ref{par:markov_general_parameter}). 

Theorem~A is motivated   by the results of William Goldman concerning the ergodic theory of the mapping class group action on character varieties~\cite{Goldman:Annals},   those of Jean Bourgain, Alex Gamburd, and Peter Sarnak concerning Markov surfaces over finite fields~\cite{Bourgain-Gamburd-Sarnak:Markov}, and the classification of stationary measures recently obtained by the first author with Christophe Dupont and Florestan Martin-Baillon for Markov surfaces~\cite{cantat-dupont-martinbaillon}. 
 
\subsection*{Acknowledgements} We would like to thank Jason Bell, Christophe Dupont, Sébastien
Gouëzel, Vincent Guirardel, Florestan Martin-Baillon, and Aftab Patel for interesting discussions. We also thank 
Alireza Salehi Golsefidy for sharing his ongoing work with Nattalie Tamam on the dynamics of the mapping class group action on character varieties.

\section{From automorphisms to flows} 
 
In this section, we recall an important theorem proven by Jason Bell; the version that we shall use is due to Bjorn Poonen (see~\cite{Bell, Bell:corrigendum, Poonen2014}). Then, following Sections 2 and 9 of~\cite{CantatXie2018}, we explain how this theorem can be applied to study the dynamics of groups of automorphisms (the reader should also consult~\cite[\S{3.1}]{Abboud:IMRN}, \cite[Prop. 22--23]{Amerik:survey-padic}, and \cite[Thm. 3.3]{bell-ghioca-tucker}). In Subsections~\ref{par:lie_dimension} and~\ref{par:general_finiteness_result}, we prove a general finiteness result on orbit closures (see Theorem~B). In the rest of the paper, we will see how this general result may be used to reach our Theorem~A.

\subsection{Local fields of characteristic $0$}\label{par:local_fields}
A non-archimedian local field $\bfK$ of characteristic zero, which will be idiomatically called a \emph{$p$-adic local field}, is the same thing as a
finite extension of $\Q_p$ for some prime $p$, with the topology induced by the (unique) extension of the $p$-adic absolute 
value $\av{\cdot}_p$. We denote the absolute value of $\bfK$ by $\av{\cdot}$; the normalization is $\av{p}=1/p$. The valuation ring is $\VR = \set{x\in \bfK\; ; \; \av{x}\leq 1}$; its group of units is $\set{x\in \bfK\; ;\; \av{x}=1}$.
The valuation group is the image $\av{\bfK^\times}\subset \R_+^\times$ of the absolute value, and it is cyclic, equal to $\av{\pi}^\Z$ for some element $\pi\in \bfK^\times$ with $\av{\pi}<1$. Such a $\pi$ is called a uniformizer, and it is unique up to multiplication by a unit of $\VR$. The ring $\VR$ contains a unique maximal ideal, namely $\mK=\set{x\in \VR\; ; \; \av{x}<1 }=\pi\VR$. The residue field is $\bfk=\VR/\mK$, and it is a finite extension of $\bfF_p$.

We consider $\VR$ as the closed unit disk of $\bfK$ and we refer to  $\VR^m$ as the (closed) polydisk of dimension $m$. 

\subsection{Analytic diffeomorphisms of the polydisk} \label{par:analytic_diffeomorphism_polydisk}

Set $\U=\VR^m$. 
An analytic function $\varphi$ on the polydisk $\U$ is an analytic function in the sense of Tate, that 
is, if we denote by $\vx=(\vx_1, \ldots, \vx_m)$ the coordinates, and by $\vx^I=\vx_1^{i_1}\cdots \vx_m^{i_m}$ the monomial associated with a multi-index $I=(i_1, \ldots, i_m)$, then $\varphi$ is defined by a power series 
\begin{equation}
\varphi(\vx)=\sum_I a_I \vx^I
\end{equation}
with coefficients $a_I\in \bfK$ such that $\av{a_I}$ goes to $0$ as $I$ goes to infinity (i.e. the length $\av{I}=i_1+\cdots +i_m$ goes to $+\infty$).  The set of analytic functions with coefficients in a subring $R\subset \bfK$ will be denoted by $R\langle \vx\rangle$. We will be particularly interested in the cases $R=\VR$ and $R=\Z_p\subset \VR$. Similarly, an analytic map $\U\to \bfK^n$ is a map defined by $n$ analytic functions $(\varphi_1,\ldots, \varphi_n)$. This notion is usually called Tate analyticity (\footnote{Note that it requires a global power series expansion on the polydisk $\U$. As such, it is not a direct analogue of the classical definition of complex analytic functions, which requires only  a local analytic expansion around each point.}). 

An analytic endomorphism of $\U$ is just an analytic map $g\colon \U\to \U$ that can be written as
$g(\vx)=(g_1(\vx), \ldots, g_m(\vx))
$ for some analytic functions $g_i\in \VR\langle \vx\rangle$. Endomorphisms form an $\VR$-module $\End^{an}(\U)$, and a monoid for the composition. Invertible elements (with respect to the composition) form a group, the group $\Diff^{an}(\U)$ of analytic diffeomorphisms of the polydisk. One can also define analytic vectors fields and analytic flows. By definition, a flow parametrized by $\VR$ is an analytic map $\Phi\colon \VR\times \U \to \U$, $(\vt,\vx)\mapsto \Phi^\vt(\vx)$, such that 
$\Phi^{\vs+\vt}(\vx)=\Phi^{\vs}(\Phi^{\vt}(\vx)).$ The vector field associated with such a flow is $\Theta(\vx)=\partial_{\vt}(\Phi^{\vt}(\vx))_{\vt=0}$. An integral curve of this vector field is the same as an orbit $t\in \VR \mapsto \Phi^t(z)\in \VR^m$, for fixed $z\in \U$. 

Let $V\subset \A^N$ be an affine variety of dimension $m$. Let $U$ be a subset of $V$. If there is an analytic map $\varphi\colon \U\to \A^N(\bfK)$ such that $\varphi(\U)=U$ and $\varphi$ is a diffeomorphism from $\U$ to $U$, we say that $U$ is a polydisk of $V$  (see~\cite[Prop. 3.3, 3.4]{CantatXie2018}). 

\subsection{Reduction and the Bell-Poonen theorem} \label{par:reduction_and_bell_poonen}
If $\ell$ is a positive integer and $\varphi=\sum_I a_I\vx^I$ is an element of $\VR\langle \vx\rangle$, its reduction modulo 
$\mK^\ell$ is the formal power series $\overline{\varphi}$ with coefficients in $\VR/\mK^\ell$ obtained by reduction of the coefficients $a_I$ modulo $\mK^\ell$. Thus, $\overline{\varphi}=\sum_I \overline{a_I} \vx^I$. Since $\av{a_I}$ goes to $0$ as $\av{I}$ goes to infinity, this power series is a polynomial in $(\vx_1, \ldots, \vx_m)$ with coefficients in the ring $\VR/\mK^\ell$. 
Recall that $\mK=\pi\VR$ for some uniformizer $\pi$. Let $e$ be the integer such that $\av{\pi}^e=\av{p}$. 
One writes $\varphi\equiv \psi \;(\modulo p^c)$ if there is an integer $\ell\geq ec$ such that $\varphi\equiv \psi \;(\modulo \mK^{\ell})$.

\begin{thm}[Bell-Poonen]\label{thm:bell_poonen}
Let $f$ be an analytic endomorphism of the polydisk $\U=\VR^m$ with $f\equiv \Id \;(\modulo p^c)$ for some real number $c>1/(p-1)$.
Then, $f$ is an analytic diffeomorphism of $\U$ and there exists a unique analytic flow $\Phi\colon \VR\times \U\to \U$ such that $\Phi^n(\vx)=f^n(\vx)$ for every $n\in \Z$; moreover
$\Phi^t(\vx)\equiv\vx\pmod{p^c}$
for all $t\in \VR$.
\end{thm}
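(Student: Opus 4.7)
\emph{Strategy.} Following Bell and Poonen, the plan is to build $\Phi$ by $p$-adic interpolation of the iteration sequence $n\mapsto f^n(\vx)$. With $f^0=\Id$ and the iterated forward differences
\[
  (\Delta^k f)(\vx) := \sum_{j=0}^{k} (-1)^{k-j}\binom{k}{j} f^j(\vx) \qquad (k\geq 0),
\]
the definition is
\[
  \Phi^t(\vx) := \sum_{k=0}^{\infty} \binom{t}{k}\,(\Delta^k f)(\vx).
\]
Newton's forward interpolation formula gives $\Phi^n(\vx)=f^n(\vx)$ for $n\in\N$ (and then also for $n\in\Z$, once $f$ is known to be invertible), so the tasks are: Tate-analytic convergence, the flow law, invertibility of $f$, the reduction $\Phi^t(\vx)\equiv\vx\pmod{p^c}$, and uniqueness.

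\emph{Key estimate.} The technical heart is the inductive claim that the Gauss norm of $\Delta^k f$ is at most $\av{p}^{ck}$. The base case $k=1$ is the hypothesis. For the induction step, Pascal's identity gives the crucial combinatorial rewriting
\[
  (\Delta^{k+1}f)(\vx) = G(f(\vx)) - G(\vx), \qquad G := \Delta^k f,
\]
and then a monomial-by-monomial telescoping (writing $f_i(\vx)=\vx_i+h_i(\vx)$ with $\|h_i\|\leq\av{p}^c$, so that each $f(\vx)^I-\vx^I$ has Gauss norm at most $\av{p}^c$) multiplies the inductive bound $\av{p}^{ck}$ by the extra factor $\av{p}^c$.

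\emph{Convergence, flow law, reduction.} For $t\in\VR$ the standard bound $\av{\binom{t}{k}}\leq p^{v_p(k!)}=p^{(k-s_p(k))/(p-1)}$ combines with the key estimate to yield
\[
  \Bigl\|\,\binom{t}{k}\,(\Delta^k f)\,\Bigr\| \leq \av{p}^{\,ck-(k-s_p(k))/(p-1)},
\]
which tends to $0$ as $k\to\infty$ since $c>1/(p-1)$; this gives Tate-analyticity of $\Phi$ on $\VR\times\U$. A direct check using $s_p(k)\geq 1$ and $c>1/(p-1)$ shows the exponent is at least $c$ for every $k\geq 1$, which proves the reduction $\Phi^t(\vx)\equiv\vx\pmod{p^c}$ and in particular that $\Phi^t(\vx)\in\U$. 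For the flow law, both $(s,t,\vx)\mapsto\Phi^{s+t}(\vx)$ and $(s,t,\vx)\mapsto\Phi^s(\Phi^t(\vx))$ are Tate-analytic on $\VR\times\VR\times\U$ and coincide on $\N\times\N\times\U$ (where each equals $f^{s+t}(\vx)$); the identity principle for Tate-analytic functions, applied one variable at a time, forces equality throughout.

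\emph{Invertibility, uniqueness, and the main obstacle.} The flow law at $t=-1$ gives $\Phi^{-1}\circ\Phi^1=\Phi^0=\Id=\Phi^1\circ\Phi^{-1}$, so $f=\Phi^1$ belongs to $\Diff^{an}(\U)$ with analytic inverse $\Phi^{-1}$. Uniqueness is again the identity principle: any second analytic flow $\tilde\Phi$ satisfying $\tilde\Phi^n=f^n$ agrees with $\Phi$ on $\N\times\U$, hence on $\VR\times\U$. The main obstacle is the Gauss-norm induction of the key estimate: without the combinatorial rewriting $\Delta^{k+1}f = G\circ f - G$, direct expansion of $\Delta^k f$ as an alternating sum of compositions $f^j$ produces uncontrolled cancellations, and only this clean rewriting closes the induction. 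The threshold $c>1/(p-1)$ is exactly the sharp Mahler-interpolation threshold in the $p$-adic setting and cannot be relaxed.
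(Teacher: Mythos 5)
Your proof is correct, but it is a genuinely different route from the one in the appendix --- indeed, it is essentially the original Bell--Poonen argument, whereas the paper explicitly presents ``an alternative proof.'' You interpolate at the level of the iterates $f^n(\vx)$ through the Mahler/Newton expansion $\Phi^t=\sum_k\binom{t}{k}\Delta^k f$, and the induction leans on the telescoping identity $\Delta^{k+1}f=(\Delta^kf)\circ f-\Delta^k f$ together with the elementary estimate $\|G\circ f-G\|\leq\|G\|\cdot\|f-\Id\|$. The paper instead lifts the problem one level up: it works with the precomposition operator $T_f\colon g\mapsto g\circ f$ on the Banach $R$-module $R\langle\vx\rangle^m$, observes $\|T_f-\Idop\|_{op}\leq p^{-c}$, and sets $\Phi^t=T_f^t(\Id)$ where $T_f^t:=\exp\bigl(t\log T_f\bigr)$ via the general theory of $\exp$/$\log$ on normal $R$-Banach algebras. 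The two are equivalent by the generalized binomial theorem, $(1+a)^t=\exp(t\log(1+a))=\sum_k\binom{t}{k}a^k$, and both hinge on the $c>1/(p-1)$ threshold in the same way; the only real difference is where the analysis is packaged. Your version is more self-contained and combinatorial (a single Gauss-norm induction, no Banach-algebra formalism), while the operator version outsources the estimates to standard $\exp$/$\log$ lemmas, handles the flow law and uniqueness in one stroke via $\exp(s+t)=\exp(s)\exp(t)$, and delivers the closed formula $\Theta_f=(\log T_f)\Id$ for the infinitesimal generator, which is precisely the formula the paper wants to record. (That same formula is hidden in your proof as $\partial_t\Phi^t|_{t=0}=\sum_{k\geq 1}\frac{(-1)^{k-1}}{k}\Delta^k f$, since $\partial_t\binom{t}{k}|_{t=0}=\frac{(-1)^{k-1}}{k}$.) One point worth being more careful about than you are: the identity-principle step uses that $\N$ is Zariski-dense for Tate-analytic functions of one variable on $\VR$ even when $\bfK\neq\Q_p$; this is fine because a nonzero element of $\bfK\langle t\rangle$ has finitely many zeros in $\VR$ (Weierstrass preparation), but it deserves a word, since $\N$ is \emph{not} topologically dense in $\VR$ outside the case $\bfK=\Q_p$.
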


Thus, if the congruence $f\equiv \Id\; (\modulo p^{c})$ holds, the dynamics of $f$ on $\U$ are ``embedded'' into the dynamics of a flow $\Phi$. For instance,  {\emph{if $\bfK=\Q_p$, then $\VR=\Z_p$ and $\Z$ is dense in $\Z_p$, so that the closure of an $f$-orbit $f^\Z(x)$ for the $p$-adic topology equals the trajectory $\Phi^{\Z_p}(x)$}}. As a continuous image of a compact set $\Z_p\times\set{x}$, the trajectory $\Phi^{\Z_p}(x)$ is closed in $\U$. 

We provide a proof of the Bell-Poonen theorem in the appendix.

\begin{rem} \label{rem:ceci-nest-pas-une-norme}
As the residue field $\bfk$ of $\VR$ is finite, the following property is \emph{not} sufficient for the congruence $f\equiv\Id\pmod{\mK^\ell}$: for all $x\in\U$, we have $f(x)-x\in(\mK^\ell)^m$. 
For instance, let $q=|\bfk|$ be the residue field order, and set $f(\vx_1)=\vx_1+(\vx_1-\vx_1^q)^\ell$ a univariable polynomial. Then we have $f\not\equiv\Id\pmod{\mK^\ell}$ yet $f(x_1)\equiv x_1\pmod{\mK^\ell}$ for all $x_1\in\VR$. This issue does not arise if $\bfK$ is extended to its algebraic closure; one can also avoid it by a finite extension of $\bfK$ whose residue field is sufficiently large (relative to $f$ and $\ell$).
\end{rem}

\subsection{Automorphisms}
Let $V$ be an affine variety, defined over $\VR$, where $\VR$ is the valuation ring of $\bfK$, as described above.

\subsubsection{Reduction} Reduction $\modulo \mK^\ell$ provides a homomorphism 
\begin{equation}
f\in \Aut(V_{\VR})\mapsto \overline{f} \in \Aut(V_{\VR/\mK^\ell}).
\end{equation} 

\begin{eg}Consider the affine plane $\A^2$ and the group $\Aut(\A^2_\VR)$. 
This group contains all affine transformations $f(\vx_1,\vx_2)=L(\vx_1,\vx_2)+T$ with $L\in \GL_2(\VR)$ and $T\in \A^2(\VR)$, as well as all 
elementary automorphisms $g(\vx_1,\vx_2)=(a\vx_1+b(\vx_2),c\vx_2+d)$, where $a, c, d\in \VR$, $b\in \VR[\vx_2]$, and $\av{a}=\av{c}=1$.
Any affine or elementary automorphism of $\A^2_\bfk$ is the reduction modulo $\mK$ of such an affine or elementary automorphism
of $\A^2_\VR$. By the theorem of Jung and van der Kulk (see~\cite{Lamy:Jung}, Thm. 2, and references therein), the reduction homomorphism $\Aut(\A^2_\VR)\to \Aut(\A^2_\bfk)$ 
is onto. \end{eg}

\begin{eg} Consider a Markoff cubic surface $M$, defined by the equation
\begin{equation}
\vx_1^2+\vx_2^2+\vx_3^2+\vx_1\vx_2\vx_3 = A\vx_1+B\vx_2+C\vx_3+D
\end{equation}
for some parameters $A$, $B$, $C$, $D$ in $\VR$.
A theorem of El'Huti shows that the  homomorphism $\Aut(M_\VR)\to \Aut(M_\bfk)$ 
is onto (see~\cite[Thm. 2]{el-huti}). \end{eg}

Let us embed $V_\VR$ into an affine space $\A^N_\VR$.  
As explained in the introduction, we endow $\A^N(\bfK)= \bfK^N$ with the product topology, and $V(\bfK)$ (resp. $V(\VR)$) with the induced topology. 
This topology does not depend on the choice of an embedding into an affine space. With this topology, 
$V(\VR)$ is compact.

For any integer $\ell\geq 1$,  denote by 
$\red_\ell\colon V(\VR)\to V(\VR/\mK^\ell)$ the reduction map modulo $\mK^\ell$. The group $\Aut(V_\VR)$ acts by permutation on $V({\VR/\mK^\ell})$, which yields a homomorphism $\Aut(V_\VR)\to  \Bij(V(\VR/\mK^{\ell}))$. 
Fix a point $z\in V(\VR/\mK^\ell)$. Its preimage $\red_\ell^{-1}(z)$ in $V(\VR)$ is an open and closed subset of $V(\VR)$. By the Hensel lemma, if $\ell$ is large enough and
if $z$ is a smooth point of $V(\bfk)$, the clopen set $\red_\ell^{-1}(z)$ is a polydisk $\simeq \VR^m$; for details, see \cite[Prop. 2.2]{bell-ghioca-tucker} and \cite[Prop. 3.4]{CantatXie2018}. Then, the stabilizer of $z\in V({\VR/\mK^\ell})$ in $\Aut(V_\VR)$ coincides with the stablizer of $\red_\ell^{-1}(z)$ and is isomorphic to a subgroup of  $\Diff^{an}(\VR^m)$. 

\subsubsection{Good polydisks (see~\cite[\S{3.2}]{CantatXie2018})} Suppose, for simplicity, that $V$ is smooth. Fix an integer $\ell_1\geq 1$ such that each preimage $\red_{\ell_1}^{-1}(z)$, for $z\in V(\VR/\mK^{\ell_1})$, is diffeomorphic to a polydisk. By the finiteness of $\VR/\mK^{\ell_1}$, this provides a finite cover of $V(\VR)$ by polydisks $\U_i$, with $i$ between $1$ and $N_1:=\vert V(\VR/\mK^{\ell_1}) \vert$. In what follows, we identify each $\U_i$ with the polydisk $\VR^m$ via some analytic diffeomorphism.

Let $\Gamma$ be a subgroup of $\Aut(V_\VR)$. 
Each of these polydisks is invariant under the action of the kernel $\Gamma_1$ of the representation $\Gamma\to \Bij(V(\VR/\mK^{\ell_1}))$ and, by restriction to each $\U_i$, $\Gamma_1$ determines $N_1$ subgroups 
\begin{equation}
\Gamma_{1}(i) \subset \Diff^{an}(\VR^m), 
\end{equation} 
where $\VR^m \simeq \U_i$. Now, choose $\ell_2$ such that $p^4\in \mK^{\ell_2}$. Each $\Gamma_{1}(i)$ 
acts, after reduction modulo $\mK^{\ell_2}$, by polynomial automorphisms on $(\VR/\mK^{\ell_2})^m$. Denote by 
$\Gamma_{0}(i)\subset \Gamma_1$ the kernel of the action on the tangent space of $(\VR/\mK^{\ell_2})^m$; by this we mean that in $\U_i$, every element $f$ of $\Gamma_{0}(i)$ satisfies 
\begin{equation} 
f(x)=x \; (\modulo \mK^{\ell_2}) \; \text{ and } \; Df_x=\Id \; (\modulo  \mK^{\ell_2})
\end{equation}  
for every $x\in (\VR/\mK^{\ell_2})^m$. The intersection of all $\Gamma_0(i)$'s is a normal subgroup $\Gamma_0$ of $\Gamma$.  

Now, pick an element $f$ in $\Gamma_0$ and a point $x$ in $V(\VR)$. Then $x$ is in some $\U_i\simeq \VR^m$. In this polydisk $\VR^m$, if we conjugate $f$ by the translation $t_x$ that maps the origin $o$ to $x$, we can write $f$ as a series 
\begin{equation}
f(\vx)=A_0+A_1(\vx)+A_2(\vx)+\cdots +A_n(\vx)+\cdots
\end{equation}
where 
\begin{enumerate}
\item each $A_n\colon \bfK^m\to \bfK^m$ is a homogeneous polynomial mapping of degree $n$ with coefficients in $\VR$;
\item $A_0=0 \;(\modulo p^4)$ and $A_1(\vx)=\vx \;(\modulo p^4)$.
\end{enumerate}
If we conjugate $f$ by the homothety $h_p\colon\vx\mapsto p\vx$, then $h_p^{-2}\circ f \circ h_p^2$ satisfies the hypothesis of Theorem~\ref{thm:bell_poonen}. Since the origin corresponds to $x$ before conjugating by $t_x$, and $h_p^2$ maps $\VR^m$ to $(p^2\VR)^m$, we see that $x$  is contained in a (smaller) polydisk $\U(x)\subset \U_i$ that is invariant by $\Gamma_0$ and where each element of $\Gamma_0$ satisfies the hypotheses of  Theorem~\ref{thm:bell_poonen}.


\begin{thm}\label{thm:good_finite_cover} Let $\bfK$ be a non-archimedian local field of characteristic $0$ and $\VR$ be its valuation ring.
Let $V$ be a smooth affine variety defined over $\VR$. Let $\Gamma$ be a subgroup of $\Aut(V_\VR)$.
There is a finite index subgroup $\Gamma_0$ of $\Gamma$ with the following property. Given any $x\in V(\VR)$, there is 
a compact open neighborhood $\U(x)$ of $x$ such that
\begin{enumerate}[\rm(1)]
\item $\U(x)$ is diffeomorphic to the polydisk $\VR^m$;
\item $\U(x)$ is $\Gamma_0$-invariant;
\item for any $f\in \Gamma_0$, there is a unique analytic flow $\Phi_f \colon \VR\times \U(x)\to \U(x)$ such that $f^n_{\vert \U(x)}=\Phi_f^n$ for every $n\in \Z$.
\end{enumerate}
\end{thm}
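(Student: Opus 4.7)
The plan is to execute the construction outlined in the preceding discussion of good polydisks, organized as three reduction steps: a geometric one (a finite cover of $V(\VR)$ by polydisks), a group-theoretic one (definition of $\Gamma_0$ via a jet-kernel), and a local dynamical one (shrinking until Theorem~\ref{thm:bell_poonen} applies).

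First, I would invoke smoothness of $V$ together with Hensel's lemma to pick an integer $\ell_1$ such that each fibre $\red_{\ell_1}^{-1}(z)$, for $z \in V(\VR/\mK^{\ell_1})$, is a polydisk analytically diffeomorphic to $\VR^m$. Finiteness of $V(\VR/\mK^{\ell_1})$ then yields a clopen partition $V(\VR) = \bigsqcup_{i=1}^{N_1}\U_i$ into polydisks, and the permutation representation $\Gamma \to \Bij(V(\VR/\mK^{\ell_1}))$ has finite image, so its kernel $\Gamma_1$ has finite index in $\Gamma$ and preserves each $\U_i$. Next I choose $\ell_2$ large enough that $\mK^{\ell_2} \subseteq p^4 \VR$, and define $\Gamma_0(i) \subseteq \Gamma_1$ as the kernel of the induced action on the (finite set of) $1$-jets modulo $\mK^{\ell_2}$ at points of $(\VR/\mK^{\ell_2})^m$, read in the polydisk coordinate on $\U_i$. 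Since each target is finite, every $\Gamma_0(i)$ has finite index in $\Gamma_1$, so $\Gamma_0 := \bigcap_{i=1}^{N_1} \Gamma_0(i)$ is a finite-index normal subgroup of $\Gamma$. By construction, for every $f \in \Gamma_0$ and every $x \in V(\VR)$, the expansion $t_x^{-1} f t_x = \sum_n A_n$ in homogeneous components on a polydisk coordinate around $x$ satisfies $A_0 \in p^4\VR^m$ and $A_1 - \Id \in p^4 \Mat_m(\VR)$.

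For each $x \in V(\VR)$, I set $\U(x) := t_x\bigl((p^2\VR)^m\bigr)$, a compact clopen neighbourhood of $x$ diffeomorphic to $\VR^m$ via $h_p^2 \circ t_x^{-1}$. The inclusion $f(\U(x)) \subseteq \U(x)$ for $f \in \Gamma_0$ follows from the direct expansion
\[
t_x^{-1} f(x + p^2 \vy) = A_0 + p^2 \vy + (A_1 - \Id)(p^2\vy) + \sum_{n\geq 2} A_n(p^2 \vy) \in p^2 \VR^m \quad (\vy \in \VR^m),
\]
and applying the same estimate to $f^{-1} \in \Gamma_0$ promotes this to equality, proving $\Gamma_0$-invariance. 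For the flow, the conjugate $F := h_p^{-2} \circ t_x^{-1} f t_x \circ h_p^2$ equals $p^{-2} A_0 + A_1 + \sum_{n\geq 2} p^{2(n-1)} A_n$ and therefore satisfies $F \equiv \Id \pmod{p^2}$. Since $2 > 1/(p-1)$ for every prime $p$, Theorem~\ref{thm:bell_poonen} supplies a unique analytic flow integrating $F$ on $\VR^m$, and transporting it by $t_x \circ h_p^2$ yields the desired $\Phi_f \colon \VR \times \U(x) \to \U(x)$ with $\Phi_f^n = f^n|_{\U(x)}$ for all $n \in \Z$.

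The main (and essentially only) obstacle is simultaneity: a single neighbourhood $\U(x)$ has to be invariant under \emph{every} element of $\Gamma_0$ at once, and the same shrinking $h_p^2$ must drive each such element into the regime of Theorem~\ref{thm:bell_poonen}. This is precisely the point of defining $\Gamma_0$ as the kernel of a \emph{group} representation on $1$-jets rather than element by element: the quantitative closeness $A_0 \in p^4 \VR^m$, $A_1 - \Id \in p^4 \Mat_m(\VR)$ becomes uniform over $\Gamma_0$, so the single radius $p^2$ works for every $f \in \Gamma_0$. The uniqueness clause in Theorem~\ref{thm:bell_poonen} plays a silent but essential role as well, both in making $\Phi_f$ canonical and in forcing its compatibility with the integer iterates $f^n$.
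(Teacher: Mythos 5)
Your proof is correct and follows the paper's construction essentially verbatim: the same Hensel-lemma polydisk cover at level $\ell_1$, the same definition of $\Gamma_0$ as the two-step kernel (first of the permutation action on $V(\VR/\mK^{\ell_1})$, then of the $1$-jet action modulo $\mK^{\ell_2}$), and the same conjugation by $t_x$ and $h_p^2$ to produce $F \equiv \Id \pmod{p^2}$ and invoke Theorem~\ref{thm:bell_poonen}, with the helpful feature that you make the invariance and congruence estimates explicit rather than asserting them. As a minor point, your phrasing $\mK^{\ell_2}\subseteq p^4\VR$ (i.e.\ $\ell_2\geq 4e$) is the form of the condition actually used to transfer the $\mK^{\ell_2}$-kernel property to the required $p^4$-congruence; this is the intended reading of the more telegraphic condition ``$p^4\in\mK^{\ell_2}$'' in the text.
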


We shall say that the orbit $\set{\Phi^t(x)\; ;\; t\in \VR}$ is the {\bf{analytic trajectory}} of $x$ determined by $f$.
\begin{rem}
When $V$ is singular, the same result holds locally on the complement of the singularities.
If the singularities of $V$ are quotient singularities, one can get a finite cover as in Theorem~\ref{thm:good_finite_cover}, except that some of the $\U(x)$ will be the quotient $\VR^m/G_x$ of the polydisk by a finite group. Then the third assertion is better stated on $\VR^m$ rather than that on the quotient. 
\end{rem}

\begin{rem}
If $V$ is defined over $\VR$ and $W\subset V(\bfK)$ is a $\Gamma$-invariant compact open subset, then the same statement holds on $W$. 
\end{rem}

\begin{rem}\label{rem:no_torsion}
 If $f$ is an element of $\Gamma_0$, $y$ is in $\U(x)$ and  $f^k(y) = y$ for some $k\geq 1$, then 
$\Phi_f^{kn}(y)=y$ and, by the principle of isolated zeroes (see~\cite[Sec. VI.2]{robert:book}), $\Phi_f^t(y)=y$ for every $t\in \VR$; taking $t=1$ shows that $f(y)=y$. 
Thus, in $V(\VR)$, every periodic point of $f$ is fixed. In particular, $\Gamma_0$ is torsion free
as soon as $V(\VR)$ is Zariski dense in $V$. See~\cite{bass-lubotzky} for a more general result.
\end{rem}

\subsection{The distribution $L_\Gamma(x)$ and its dimension $s_\Gamma(x)$}\label{par:lie_dimension}

Let $x$ be a smooth point of $V(\VR)$,  $\U\subset V(\VR)$ be a polydisk containing $x$, and  $f$ be an element of $\Gamma$ stabilizing $\U$. If $f=\Phi_f^1$ for some flow $\Phi_f\colon \VR\times \U\to \U$, we define 
\begin{equation}
\label{eqn:champ-vectoriel-definition}
\Theta_f(x)=\left(\frac{\partial \Phi_f^t(x)}{\partial t}\right)_{\vert t=0}.
\end{equation}

Changing $f$ to $f^k$ for some $k\in\Z\setminus\set{0}$ changes $\Theta_f(x)$ to $k\Theta_f(x)$. Thus, in the tangent space $T_xV$, viewed as a vector space over  $\Q_p$, we have  $\Q_p\Theta_f(x)=\Q_p\Theta_{f^k}(x)$ for every $k\neq 0$. This space is a line if and only if $\Theta_f(x)\neq 0$, if and only if $f(x)\neq x$ (see Remark~\ref{rem:no_torsion}). We denote this line by 
\begin{equation}
[\Theta_f(x)]\in \P(T_xV)
\end{equation}
where $\P(T_xV)=(T_xV\setminus \set{0})/\Q_p^\times$ (note that $T_xV$ is viewed as a vector space over $\Q_p$, rather than over $\bfK$). 
Now, suppose $\U$ is stabilized by a finite index subgroup $\Gamma_0\subset \Gamma$ such that every element of $\Gamma_0$ is in a flow. The subspace 
\begin{equation}
L_{\Gamma_0}(x)=\vect_{\Q_p}(\Theta_f(x); f\in \Gamma_0)\subset T_xV
\end{equation}
is equal to $L_{\Gamma_1}(x)$ if $\Gamma_1$ has finite index in $\Gamma_0$. Thus, it depends only on $\Gamma$ and we denote it by $L_{\Gamma}(x)$. By definition, $L_{\Gamma}(x)$ is the {\bf{tangent space to the action}} of $\Gamma$ at $x$. Its dimension, as a $\Q_p$-vector space, is denoted by $s_\Gamma(x)$. 

\begin{pro}
Let $\U\subset V(\VR)$ be a $\Gamma_0$-invariant polydisk, for some finite index subgproup $\Gamma_0$ of $\Gamma$. The function $s_\Gamma \colon \U\to \Z_+$ is semi-continuous in the analytic topology: this means that the sets $\set{x\in \U\; ;\; s_\Gamma(x)\leq k}$ are $\Q_p$-analytic subsets (in the sense of Tate). 
\end{pro}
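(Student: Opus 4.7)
\emph{Plan.} The aim is to express $\set{x \in \U \; ; \; s_\Gamma(x) \leq k}$ as the common zero set of a family of $\Q_p$-Tate-analytic functions on $\U$, and then to invoke Noetherianity of the Tate algebra to cut this family down to a finite one.

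\emph{Analyticity of $\Theta_f$ and restriction of scalars.} By the construction recalled in Theorem~\ref{thm:good_finite_cover}, every $f \in \Gamma_0$ is embedded in a Tate-analytic flow $\Phi_f \colon \VR \times \U \to \U$; term-by-term differentiation of the power series for $\Phi_f^t(\vx)$ at $t = 0$ yields $\Theta_f(\vx) = \sum_i \theta_{f,i}(\vx)\,\partial_{\vx_i}$ with $\theta_{f,i} \in \VR\langle \vx\rangle$, after identifying $\U$ with $\VR^m$. Next I would carry out a restriction of scalars. Set $d = [\bfK : \Q_p]$ and fix a $\Z_p$-basis $e_1, \ldots, e_d$ of $\VR$; the substitution $\vx_i = \sum_j y_{i,j} e_j$ identifies $\VR^m$ with the $\Q_p$-polydisk $\Z_p^{md}$ of coordinates $y = (y_{i,j})$. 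A rearrangement of $\sum_I a_I \vx^I$ along these new coordinates preserves the convergence condition $|a_I| \to 0$ (controlled by the uniform bound $|e_j| \leq 1$), so each $\varphi \in \VR\langle \vx\rangle$ decomposes as $\varphi = \sum_j \varphi_j e_j$ with $\varphi_j$ a $\Q_p$-Tate-analytic function on $\Z_p^{md}$. Applied to the $\theta_{f,i}$, this yields $\Q_p$-Tate-analytic functions $c_{f,i,j}$ on $\U$ with $\theta_{f,i} = \sum_j c_{f,i,j} e_j$; these $c_{f,i,j}(x)$ are the $\Q_p$-coordinates of $\Theta_f(x)$ in the $\Q_p$-basis $\set{e_j\,\partial_{\vx_i}}$ of the $md$-dimensional $\Q_p$-vector space $T_x V$.

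\emph{Rank condition and Noetherianity.} With these coordinates in hand, $s_\Gamma(x) \leq k$ if and only if the matrix whose rows are the coordinate vectors $(c_{f,i,j}(x))_{i,j}$, for $f \in \Gamma_0$, has $\Q_p$-rank at most $k$; equivalently, for every $(k+1)$-tuple $F = (f_1, \ldots, f_{k+1})$ in $\Gamma_0$, all $(k+1) \times (k+1)$ minors of the associated submatrix $M_F(x)$ vanish. Each such minor is a polynomial in analytic functions, hence itself $\Q_p$-Tate-analytic, so its common vanishing carves out a $\Q_p$-analytic subset $Z_F \subset \U$. By Noetherianity of the Tate algebra $\Q_p\langle y_1, \ldots, y_{md}\rangle$, the ideal generated by all these minors, as $F$ varies over $\Gamma_0^{k+1}$, is already generated by finitely many of them; hence $\bigcap_F Z_F = Z_{F_1} \cap \cdots \cap Z_{F_r}$ for a finite collection, and $\set{x \in \U \; ; \; s_\Gamma(x) \leq k}$ is a $\Q_p$-analytic subset of $\U$ in the sense of Tate.

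\emph{Main obstacle.} The delicate point is the restriction-of-scalars step: I must verify that the re-expansion of a $\bfK$-valued Tate series along the $\Z_p$-basis $(e_j)$ of $\VR$ remains Tate-analytic over $\Q_p$. This is a convergence check rather than a conceptual difficulty, controlled by $|e_j| \leq 1$ and $|a_I| \to 0$; once it is granted, the rank description and the Noetherian reduction are both formal.
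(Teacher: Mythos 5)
Your proof is correct and follows essentially the same route as the paper's: expressing $\set{s_\Gamma \leq k}$ as the simultaneous vanishing of rank conditions on the vector fields $\Theta_{f_i}$ after restriction of scalars from $\VR$ to $\Z_p$, with your $(k{+}1)\times(k{+}1)$ minors being algebraically the same thing as the paper's wedge products $\Theta_{f_1}\wedge\cdots\wedge\Theta_{f_{k+1}}$. You supply two useful verifications the paper leaves implicit (the convergence check for the Weil restriction of a Tate series, and the Noetherianity reduction to finitely many conditions), but the underlying argument is identical.
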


Note that in this statement the polydisk $\U\simeq \VR^m$ is seen as a polydisk $\Z_p^{km}$ of dimension $km$ over $\Z_p$, with $\VR\simeq \Z_p^k$.  

\begin{proof}
Let $f_1$, $\ldots$, $f_{k+1}$ be elements of $\Gamma_0$. Then, the set $\set{x\in \U\; ;\; s_\Gamma(x)\leq k}$ is contained in the $\Q_p$-analytic set $\set{x\in \U\; ; \; \Theta_{f_1}(x)\wedge \cdots \wedge \Theta_{f_{k+1}}(x)=0}$, 
and is equal to the intersection of these sets as we vary the $f_i$.
\end{proof}

\subsection{A general finiteness result}\label{par:general_finiteness_result}$\; $ 

\smallskip

\begin{thm-B} Let $V$ be a smooth affine variety defined over $\VR$. Let $\Gamma$ be a subgroup of $\Aut(V_\VR)$. Let $B\subset V(\VR)$ be a $\Gamma$-invariant compact subset.
If $s_\Gamma(x)=[\bfK:\Q_p]\dim(V)$ for every $x\in B$, 
then the orbit closures of $\Gamma$ in $B$ form a partition of $B$ into finitely many compact open subsets.
\end{thm-B}

\begin{proof}
Let $x$ be a point of $V(\VR)$. Let $\U\subset V(\VR)$ be a polydisk containing $x$ which is stabilized by a finite index subgroup $\Gamma_0$ of $\Gamma$, as in Theorem~\ref{thm:good_finite_cover}. 

Set $N= [\bfK:\Q_p]\dim(V)$. If $s_\Gamma(x)=N$, the orbit closure $C_0(x):={\overline{\Gamma_0(x)}}$ contains an open neighborhood of $x$. Indeed, we can choose elements $f_1$, $\ldots$, $f_N$ in $\Gamma_0$ such that $(\Theta_{f_i}(x))$, $1\leq i\leq N$, form a basis of $T_xV$ as a $\Q_p$-vector space. Then, if we denote by $\Phi_{i}$ the flow associated with $f_i$, the map 
\begin{equation}
(t_1, \ldots, t_N)\in \Z_p^N\mapsto \Phi_{f_N}^{t_N}\circ \cdots\circ \Phi_{f_1}^{t_1}(x)\in V(\VR)
\end{equation}
 is a local diffeomorphism.
On the other hand, given any $N$-tuple $(t_1, \ldots, t_N) $ and any $\epsilon >0$, there are integers $(n_1, \ldots, n_N)$ such that $\av{t_i-n_i}_p\leq \epsilon$ for each $i$. Thus,   there is a neighborhood of $x$ in which $\Gamma_0(x)$ is $\epsilon$-dense. Since this holds for every $\epsilon>0$, $C_0(x)$ contains a neighborhood of $x$. 

Set $C(x)={\overline{\Gamma(x)}}$. Since $\Gamma_0$ has finite index in $\Gamma$, $C(x)$ is made of finitely many images $g(C_0(x))$, with $g\in \Gamma/\Gamma_0$. In particular, $C(x)$ is compact and open. As $C(x)$ is open, the sets $C(x)$, for $x\in V(\VR)$, are disjoint: if two orbit closures intersect, they coincide. This proves that the orbit closures form a partition of $V(\VR)$ into clopen subsets, and since $V(\VR)$ is compact, this partition contains only finitely many atoms. 
\end{proof}

\begin{rem}\label{rem:orbit_closure}  The same argument shows the following. 
{\emph{
If $s_\Gamma(x)=[\bfK:\Q_p]\dim(V)$ for every $x$ in the complement of some closed analytic subset $Z\subset V$, then 
the orbit closures form a partition of $V(\VR)\setminus Z(\VR)$ into (at most) countably many compact open subsets; this partition is locally finite. 
}}
\end{rem}

\section{Automorphisms of affine surfaces} \label{par:automorphisms}
In this section and the next one, we collect a few facts concerning automorphisms of affine surfaces: the leitmotif is to explain how general results concerning birational transformations of surfaces specialize --- and become more precise --- for automorphisms of affine surfaces. See, in particular, Theorem~C in Section~\ref{par:jonquieres_gmxgm}.
Here, we denote by $X_0$ an affine surface and by $X$ a projective completion of $X_0$. 
This notation is specific to this section (in the rest of the paper, $X$ is an affine surface). 

\subsection{Elliptic, parabolic, loxodromic} \label{sec:elliptic-parabolic-loxodromic}

Let $X$ be a projective surface, over an algebraically closed field $\bfK$. Let $\Bir(X)$ be its group of birational transformations. 
Let $H$ be a polarization of $X$; the degree of a rational map $f\colon X\dasharrow X$ with respect to $H$ is the intersection number 
\begin{equation}\label{eq:equivalence_of_degrees}
\deg_H(f)= (H\cdot f^*H).
\end{equation}
If $\varphi\colon Y\dasharrow X$ is a birational map and $H'$ is a polarization of $Y$, 
then 
\begin{equation}
c^{-1} \deg_H(f)\leq \deg_{H'}(\varphi^{-1} \circ f \circ \varphi)\leq c\deg_H(f)
\end{equation} 
for some constant $c\geq 1$ and all $f\in \Bir(X)$; in what follows, the degree is simply denoted by $\deg(\cdot)$.
Each element $f$ of $\Bir(X)$ satisfies exactly one of the following properties  (see~\cite[Thm. 4.6]{cantat:cremona_survey},~\cite[Thm. 4.6]{Cantat-Guiradel-Lonjou},~\cite[Sec. 3.2]{Cantat-Favre}). 
\begin{enumerate}[(a)]
\item[(a)] The sequence $(\deg(f^n))_{n\geq 0}$ is bounded, and then there is a smooth projective surface $Y$, a birational map $\varphi\colon Y\dasharrow X$, and a positive integer $k$ such that $f_Y:=\varphi^{-1}\circ f\circ \varphi$ is in $\Aut(Y)$ and $f_Y^k$ is in the connected component $\Aut(Y)^0$ of $\Id_Y$ in $\Aut(Y)$.
\item[(b)] The sequence $(\deg(f^n))$ grows linearly with $n\in \N$, and then there is a smooth projective surface $Y$, a birational map $\varphi\colon Y\dasharrow X$, a curve $B$, and a genus $0$ fibration $\pi\colon Y\to B$ such that $f_Y:=\varphi^{-1}\circ f\circ \varphi$ permutes the fibers of $\pi$. 
This fibration $\pi$ is uniquely determined by $f_Y$ (up to post-composition by an automorphism of $B$). Such a birational transformation is never conjugate to an automorphism of a {\emph{projective}} surface $Y$ by a birational map $Y\dasharrow X$.
\item[(b')] The sequence $(\deg(f^n))$ grows quadratically with $n\in \N$, and then there is a smooth projective surface $Y$, a birational map $\varphi\colon Y\dasharrow X$, a curve $B$, and a genus $1$ fibration $\pi\colon Y\to B$, such that $f_Y:=\varphi^{-1}\circ f\circ \varphi$ permutes the fibers of $\pi$. 
Moreover, on a relatively minimal model of the invariant fibration, $f$ becomes an automorphism.
\item[(c)] The sequence $(\deg(f^n))$ grows exponentially with $n\in \N$. In that case, $f$ does not preserve any pencil of curves. 
\end{enumerate}
In cases (b) and (b'), {\sl{genus $k$ fibration}} means that $\pi$ is a fibration with connected fibers and with smooth general fibers of genus $k$; and {\sl{$f_Y$ permutes the fibers of $\pi$}} means that there is an automorphism $f_B$ of $B$ such that $\pi \circ f_Y=f_B\circ \pi$.

In case (a), we say that $f$ is {\bf{elliptic}}; in cases (b) and (b') that $f$ is {\bf{parabolic}}; and in case (c) that $f$ is {\bf{loxodromic}}. Thus, there are two types of parabolic transformations: in case (b), $f$ is called a {\bf{Jonquières twist}}; in case (b') it is a {\bf{Halphen twist}}. 
In all cases, the limit 
\begin{equation}
\lambda_1(f)=\lim_{n\to +\infty} \deg(f^n)^{1/n}
\end{equation}  
exists and is, by definition, the dynamical degree of $f$; we have $\lambda_1(f)\geq 1$, and the inequality is strict if and only if $f$ is loxodromic. 

\begin{eg}
A non-loxodromic automorphism of the affine plane is conjugate to an elementary map $(\vx,\vy)\mapsto (a\vx+b, c\vy+P(\vx))$ and, as such, is elliptic. For any polynomial function $P(\vx)$ of degree $\geq 2$, the Hénon map 
$(\vx,\vy)\mapsto (\vy+P(\vx), \vx)$ is loxodromic, with dynamical degree $\deg(P)$.
\end{eg}

\begin{pro}\label{pro:not_conjugate_automorphism_projective} 
Let $X_0$ be an affine surface and $\Gamma$ be a subgroup of $\Aut(X_0)$. Assume there is a projective surface $Y$ and a birational map $\varphi_0\colon X_0\dasharrow Y$ such that the group $\Gamma_Y:=\varphi_0\Gamma\varphi_0^{-1}$ is contained in $\Aut(Y)$. Then, $\Gamma$ is contained in an algebraic subgroup of $\Aut(X_0)$; in particular, every element of $\Gamma$ is elliptic and $\Gamma$ is elementary.
\end{pro}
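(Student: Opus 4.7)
The plan is to transfer the problem to the projective surface $Y$ and exploit the Hodge Index Theorem on $\NS(Y)$, using positivity that comes from the affineness of $X_0$. After replacing $Y$ by a smooth resolution (which does not affect $\Aut(Y)$ up to finite index), fix a smooth projective completion $X$ of $X_0$ and resolve $\varphi_0\colon X\dashrightarrow Y$ through a common smooth projective surface $Z$, with birational morphisms $\pi_X\colon Z\to X$ and $\pi_Y\colon Z\to Y$. Since $X_0$ is affine, the boundary $B=X\setminus X_0$ supports an ample divisor $H$ on $X$. Let $E=\pi_X^{-1}(B)$, and define $D\subset Y$ to be the divisorial part of $\pi_Y(E)$, enlarged if necessary to contain all $\pi_Y$-exceptional components lying over $B$. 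The pushforward class $(\pi_Y)_*(\pi_X^*H)$ is big and supported on $D$, so some positive rational combination $c=\sum a_i[D_i]$ of components of $D$ satisfies $c^2>0$ in $\NS(Y)_\R$.

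I would then show that $\Gamma_Y$ preserves $D$ as a divisor. Indeed, any $f\in\Gamma\subset\Aut(X_0)$ extends to a birational self-map $\bar f\colon X\dashrightarrow X$ whose indeterminacy and exceptional loci lie in $B$, and which maps $B$ to itself birationally (because both $f$ and $f^{-1}$ are regular on $X_0$). Conjugating through $Z$, the map $f_Y=\varphi_0 f\varphi_0^{-1}$ is by hypothesis \emph{biregular} on $Y$, so it cannot contract any component of $D$ and must permute the finitely many irreducible components of $D$. After passing to a finite-index subgroup $\Gamma_Y'\subset\Gamma_Y$ fixing each component, $\Gamma_Y'$ fixes the class $c$.

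By the Hodge Index Theorem the orthogonal complement $c^\perp$ in $\NS(Y)_\R$ is negative definite, so the action of $\Gamma_Y'$ on the lattice $c^\perp\cap\NS(Y)$ has finite image. A further finite-index subgroup $\Gamma_Y''\subset\Gamma_Y$ therefore acts trivially on $\NS(Y)$, and by Lieberman's theorem $\Gamma_Y''$ sits inside the connected component $\Aut(Y)^0$, a finite-dimensional algebraic group. Conjugating back by $\varphi_0^{-1}$ produces an algebraic subgroup of $\Bir(X_0)$ containing $\Gamma$; intersecting with the closed condition that elements lie in $\Aut(X_0)$ yields an algebraic subgroup of $\Aut(X_0)$ containing $\Gamma$. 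Any element of such an algebraic subgroup has uniformly bounded iterate degrees and is therefore elliptic in the sense of the classification of Section~\ref{sec:elliptic-parabolic-loxodromic}; and $\Gamma$ is elementary because it sits in a finite-dimensional group.

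The main obstacle will be the second step, namely choosing the divisor $D$ correctly. The subtlety is that $\varphi_0$ may contract or blow up some components of $B$, and one must arrange the definition of $D$ so that $\Gamma_Y$ really preserves it \emph{and} so that one can still build a $\Gamma_Y'$-invariant class $c$ with $c^2>0$. Bigness of the pushforward of an ample class is automatic, but bigness of an effective divisor on a surface does not by itself imply positive self-intersection, so a careful combination of components (and a careful treatment of the $\pi_Y$-exceptional locus) is needed. A secondary subtlety is handling possible singularities of $X_0$, which require performing the construction on a suitable resolution.
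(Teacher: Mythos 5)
Your proof follows essentially the same route as the paper's: fix a projective completion $X$ of $X_0$, invoke Goodman's theorem to find an ample divisor $H$ supported on $X\setminus X_0$, pass through a common resolution $Z$ of the birational map to $Y$, push $\epsilon^*H$ forward to get a big and nef class $H_Y$ supported on a boundary divisor $D\subset Y$, show $\Gamma_Y$ preserves $D$, and conclude via the Hodge Index Theorem and the structure of $\Aut(Y)$. The only real difference is one of economy: the paper records that $H_Y=\eta_*(\epsilon^* H)$ is big and nef and uses ``nef and big implies positive self-intersection,'' which dissolves the subtlety you flag about choosing a positive combination $c$ of components with $c^2>0$; and for $\Gamma_Y$-invariance of $D$, the paper gives the full contradiction argument (if $f_Y(D')\not\subset\Supp(H_Y)$, then its strict transform in $Z$ meets $\epsilon^{-1}(X_0)$ and $f^{-1}$ would then move a curve of $X_0$ into the boundary), whereas you assert the invariance after a somewhat informal remark about $\bar f$ preserving $B$. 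Your sketch is essentially correct and you correctly identify where the care is needed.
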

\begin{proof}
Let $\Exc(\varphi_0)$ be the union of the irreducible curves $E\subset X_0$ which are contracted by $\varphi_0$ (i.e.\ the strict transform of $E$ is a point). 
Let $E$ be such a curve and let $q\in Y$ be the point to which $E$ is contracted. 
If $f$ is in $\Gamma$ and $f_Y=\varphi_0\circ f\circ\varphi_0^{-1}$, then $f(E)$ is a curve contracted by $\varphi_0$ onto the point $f_Y(q)$. Thus, $\Gamma$ preserves $\Exc(\varphi_0)$. Similarly, the indeterminacy locus $\Ind(\varphi_0)$ is a finite $\Gamma$-invariant subset of $X_0$. 

Now, consider a projective completion $X$ of $X_0$, the extension $\varphi\colon X\dasharrow Y$ of $\varphi_0$, and a resolution of the indeterminacies of $\varphi$: this is given by a projective surface $Z$ and birational morphisms $\epsilon\colon Z\to X$, $\eta\colon Z \to Y$ such that $\varphi=\eta\circ \epsilon^{-1}$. 

There is an effective and ample divisor $H$ supported on $X\setminus X_0$ (see~\cite[Thm. 1]{Goodman:Annals}). 
Set $H_Z=\epsilon^*H$ and $H_Y=\eta_*(H_Z)$. If $D$ is an irreducible component of $H_Y$, and if $f_Y=\varphi\circ f\circ\varphi^{-1}$ is an element of $\Gamma_Y$, then $f_Y(D)$ is also a component of $H_Y$. Indeed, if $f_Y(D)$ is not contained in $\eta_*(H_Z)$, its strict transform in $Z$ under $\eta^{-1}$ is a curve $D_Z$ that intersects $\epsilon^{-1}(X_0)$; then $\epsilon(D_Z)$ gives a curve in $X_0$ or an indeterminacy point of $\varphi_0$. In both cases $\epsilon(D_Z)$ is mapped by $f^{-1}$ into $X\setminus X_0$, and this contradicts the fact that $f$ is in $\Aut(X_0)$. Thus, $\Gamma_Y$ preserves the support of $H_Y$, and a finite index subgroup of $\Gamma_Y$ preserves individually each component of $H_Y$. 

Since $H$ is ample, $H_Y$ is big and nef. Hence, $\Gamma_Y$ preserves a big and nef divisor. This implies that $\Gamma_Y$ is contained in an algebraic subgroup of $\Aut(Y)$. Consequently, $\Gamma$ is contained in an algebraic subgroup of $\Aut(X_0)$.(\footnote{Let us recall the proof of this last step (see~\cite[Sec. 2]{Cantat:Milnor} for references). The group $\Aut(Y)$ acts on the Néron-Severi group $\NS(Y)$, the connected component of the identity $\Aut(Y)^0$ is an algebraic group, and if $[A]$ is  a class of positive self-intersection, then $\Aut(Y)^0$ has finite index in the stabilizer of $[A]$ in $\Aut(Y)$. On the other hand, $\Gamma_Y$ preserves a big and nef class, and the self-intersection of such a class is automatically positive. Thus, $\Aut(Y)^0\cap \Gamma_Y$ has finite index in $\Gamma_Y$, which shows that $\Gamma_Y$ is contained in a finite extension of $\Aut(Y)^0$.})
\end{proof}

\subsection{Non-elementary groups}
 There is a natural hyperbolic space $\Hyp_X$ associated with $X$ on which $\Bir(X)$ acts by isometries. The notions of elliptic, parabolic, or loxodromic birational transformations coincide with the three possible kinds of isometries (see~\cite[\S{3.5}]{Cantat:Annals} and~\cite[Thm. 4.6]{cantat:cremona_survey}). Similarly, there is a notion of non-elementary group of isometries, which can be described without any reference to $\Hyp_X$:
a subgroup $\Gamma$ of $\Bir(X)$ is non-elementary if it contains a non-abelian free group, the element of which (except the identity) are all loxodromic. When $\Gamma$ contains a parabolic element $f$, $\Gamma$ is non-elementary if and only if it does not preserve the $f$-invariant fibration (see~\cite[Prop. 6.12]{Cantat:Annals} and~\cite[\S 6]{cantat:cremona_survey}). 

\subsection{Automorphisms of affine surfaces, characterizations of $\Gm\times\Gm$}\label{par:jonquieres_gmxgm}
Let $X_0$ be an affine surface. Let $X$ be a projective completion of $X_0$. We obtain an embedding $\Aut(X_0)\subset \Bir(X)$, 
and the above classification can be applied to the elements of $\Aut(X_0)$. The following results are proven in~\cite[Prop. 4.4.20]{Abboud:These} and~\cite[Cor. 10.11]{Abboud:IMRN}. 

\begin{thm}\label{thm:Abboud_invariant_curve}
Let $X_0$ be an affine surface and $g$ be a loxodromic automorphism of $X_0$. Then, $g$ does not preserve any curve $C\subset X_0$.
\end{thm}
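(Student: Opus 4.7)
The plan is to argue by contradiction, combining the cohomological signature of loxodromic isometries on $\NS(X)_\R$ with the fact that any projective completion of an affine surface has a boundary supporting an ample divisor. Suppose $g\in\Aut(X_0)$ is loxodromic and fixes an irreducible curve $C\subset X_0$. Fix a smooth projective completion $X$ of $X_0$ and let $\bar C$ denote the Zariski closure of $C$ in $X$; then $\bar C\not\subset \partial X:=X\setminus X_0$. After blowing up points of $\partial X$, I may assume that the birational action of $g$ on $X$ is algebraically stable, so that $(g^n)^*=(g^*)^n$ on $\NS(X)_\R$; replacing $g$ by a suitable iterate, I arrange in addition that $g^*[\bar C]=[\bar C]$.

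From here, I extract a sharp cohomological constraint. Since $g$ is loxodromic, $g^*$ admits a nef eigenclass $\theta^+$ with $g^*\theta^+=\lambda_1(g)\theta^+$ and $(\theta^+)^2=0$. Algebraic stability ensures that $g^*$ preserves the intersection pairing, so
\begin{equation*}
[\bar C]\cdot\theta^+=(g^*[\bar C])\cdot(g^*\theta^+)=\lambda_1(g)\,[\bar C]\cdot\theta^+.
\end{equation*}
Since $\lambda_1(g)>1$, this yields $[\bar C]\cdot\theta^+=0$. Moreover, $[\bar C]$ cannot be proportional to $\theta^+$, for otherwise $g^*[\bar C]=\lambda_1(g)[\bar C]\neq[\bar C]$. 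The Hodge index theorem then gives $[\bar C]^2<0$. The same argument applied to the nef eigenclass $\theta^-$ (for eigenvalue $\lambda_1(g)^{-1}$) yields $[\bar C]\cdot\theta^-=0$, so $[\bar C]$ lies in the $g^*$-invariant negative-definite subspace of $\NS(X)_\R$ orthogonal to $\theta^\pm$.

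Now I invoke the affineness of $X_0$. By Goodman's theorem (cited later in the excerpt), $\partial X$ supports an effective ample divisor $H$. Since $\bar C\not\subset\partial X$, one has $H\cdot[\bar C]>0$, while $H\cdot\theta^+>0$ because $H$ is ample and $\theta^+$ is a nonzero nef class. The contradiction comes from the following geometric claim: every $g$-periodic irreducible curve on $X$ must be supported in $\partial X$. Indeed, $g$ restricts to an automorphism of $X_0$, so the forward exceptional divisor $\Exc(g^n)$ and the backward exceptional divisor $\Exc(g^{-n})$ of each iterate are contained in $\partial X$ for $n\geq 1$; standard results on loxodromic birational surface transformations state that any $g$-periodic irreducible curve lies in the intersection of these exceptional divisors. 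Applied to $\bar C$, this forces $\bar C\subset\partial X$, contradicting $C\subset X_0$.

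The most delicate step --- and the one I expect to be the main obstacle --- is the last one: rigorously justifying that $g$-periodic curves for a loxodromic $g\in\Aut(X_0)$ must live in $\partial X$. The cohomological data isolates $[\bar C]$ in a specific negative-definite $g^*$-invariant sublattice, but passing from this lattice-theoretic constraint to the geometric containment requires a careful analysis of how $\Exc(g)$ and $\Exc(g^{-1})$ sit inside $X$ and how $g^*$ permutes boundary components. One route is to invoke the structure theorems of Abboud (referenced in the excerpt) for automorphism groups of affine surfaces; another, for specific families such as Markov surfaces, is to rely on the explicit description of the boundary compactification to exclude interior invariant curves by hand.
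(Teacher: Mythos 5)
The paper does not contain its own proof of this result: it is quoted directly from Abboud (\cite[Prop.~4.4.20]{Abboud:These}, \cite[Cor.~10.11]{Abboud:IMRN}), so there is no internal argument here to compare yours against. What I can do is assess your attempt on its merits, and the short answer is that it stalls precisely where you flag it yourself, and the lemma you reach for to close the gap is not actually available.

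Your cohomological preparation (algebraic stabilization, $[\bar C]\cdot\theta^\pm=0$, hence $[\bar C]^2<0$ by Hodge index and $[\bar C]$ lying in a $g^*$-invariant negative-definite subspace) is the right thing to extract, modulo one subtlety: the assertion that after passing to an iterate one can arrange $g^*[\bar C]=[\bar C]$ is not automatic. For a birational map $g$ on a completion $X$ that merely preserves $\bar C$ as a set, one has the pushforward identity $g_*[\bar C]=[\bar C]$ (strict transform of an uncontracted irreducible curve), but the pullback $g^*[\bar C]$ can pick up effective corrections supported on the exceptional locus. The clean way to obtain $[\bar C]\cdot\theta^+=0$ is to use $g_*$ together with the projection formula: $\theta^+\cdot[\bar C]=\theta^+\cdot g_*[\bar C]=g^*\theta^+\cdot[\bar C]=\lambda_1[\bar C]\cdot\theta^+$, and similarly for $\theta^-$. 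So that part is salvageable.

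The genuine gap is the claim you invoke to finish: that ``standard results on loxodromic birational surface transformations state that any $g$-periodic irreducible curve lies in the intersection of these exceptional divisors.'' This is false as stated. Loxodromic automorphisms of projective surfaces (K3 or Enriques surfaces, blowups of the plane in the Coble/Cremona-type examples) routinely have periodic $(-2)$-curves, yet their exceptional divisors are empty. What Diller--Favre/Cantat actually give is that periodic curves of an algebraically stable loxodromic map span a negative-definite sublattice orthogonal to $\theta^\pm$ and that there are finitely many of them; they do not give containment in $\Exc(g)\cap\Exc(g^{-1})$. Without that false lemma your chain of implications breaks, and moreover as phrased the closing claim (``every $g$-periodic irreducible curve on $X$ must be supported in $\partial X$'') is essentially a restatement of the theorem you are trying to prove, so the argument becomes circular. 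To make the strategy work one would need a genuine input about how $\theta^+$ and $\theta^-$ are positioned relative to the cone of effective divisors supported on $\partial X$ (e.g.\ exhibiting them as honest nonnegative combinations of boundary components via the polyhedrality of that cone, and then showing $\bar C$ cannot simultaneously avoid both supports without forcing a projective curve inside the affine $X_0$); this is roughly the extra structure, specific to automorphisms of affine surfaces, that Abboud's proof provides.
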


\begin{thm}\label{thm:Iitaka-Abboud}
Let $X_0$ be an affine surface. Assume that {\rm{(i)}} $\Aut(X_0)$ contains a loxodromic automorphism and {\rm{(ii)}} there is a non-constant 
and non-vanishing function $\xi\colon X_0\to \bfK^\times$. Then, $X_0$ is isomorphic to $\Gm\times\Gm$ over $\bfKbar$. 
\end{thm}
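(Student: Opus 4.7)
The plan is to analyse the group of units $U := \mathcal{O}(X_0\otimes_{\bfK}\bfKbar)^\times / \bfKbar^\times$ and use the loxodromic automorphism to force $U$ to have rank exactly $2$, producing a natural identification of $X_0$ with $\Gm\times\Gm$ over $\bfKbar$. By a classical theorem of Rosenlicht (or Samuel for normal varieties), $U$ is a finitely generated free abelian group; hypothesis~(ii) guarantees that $r:=\rk(U)\geq 1$, and the loxodromic element $g\in\Aut(X_0)$ acts on $U$ by an automorphism $g^\ast\in \GL_r(\Z)$.

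I would first rule out $r=1$. If $U$ were generated by a single unit $\eta$ (modulo constants), then $g^\ast$ would act on $U\simeq\Z$ as $\pm 1$, so $g^\ast\eta = c\,\eta^{\pm 1}$ for some $c\in\bfKbar^\times$. In either case the fibres of $\eta\colon X_0\to\Gm$ are permuted by $g$, and therefore the associated pencil of curves on any smooth projective completion of $X_0$ is $g$-invariant. This contradicts case~(c) of the classification recalled in Section~\ref{sec:elliptic-parabolic-loxodromic}, according to which a loxodromic transformation preserves no pencil. Hence $r\geq 2$.

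With $r\geq 2$, one selects two multiplicatively independent units $\xi_1,\xi_2\in\mathcal{O}(X_0\otimes\bfKbar)^\times$ and forms the morphism $\pi=(\xi_1,\xi_2)\colon X_0\otimes\bfKbar \to \Gm\times\Gm$. The heart of the proof, and the main obstacle, is to promote $\pi$ to an isomorphism. The first substep is to show that $\pi$ is dominant: were its image only a curve $C\subset\Gm\times\Gm$, the fibration $X_0\to C$ would inherit a $g$-equivariant structure coming from $g^\ast$ on the character lattice, producing again a $g$-invariant pencil, which is excluded by loxodromy. Once $\pi$ is dominant, hence generically finite, the multiplicative independence of $\xi_1,\xi_2$ identifies the pullback of the character lattice of $\Gm\times\Gm$ with a finite-index subgroup of~$U$, and a branch-locus analysis, combined with the fact that $g$ preserves no curve on $X_0$ (Theorem~\ref{thm:Abboud_invariant_curve}), shows that $\pi$ is étale. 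Since any finite étale cover of $\Gm\times\Gm$ by a smooth affine surface is itself $\Gm\times\Gm$ (pulled back by an isogeny), we conclude $X_0\otimes\bfKbar\cong\Gm\times\Gm$. The main technical effort will lie in coordinating the dynamics of $g^\ast$ on the units, the boundary divisors of a projective completion of $X_0$, and the ramification of~$\pi$; I expect that all three must be exploited simultaneously to exclude both a higher-rank unit group and a non-trivial branch divisor.
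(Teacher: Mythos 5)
The paper does not prove Theorem~\ref{thm:Iitaka-Abboud} itself; it is imported from Abboud's thesis and IMRN paper, where the argument runs through Iitaka's theory of log Kodaira dimension and the logarithmic irregularity $\bar q$. Your overall strategy --- work with the unit group $U$ and the natural map to a torus --- is a legitimate and closely related route, and your rank-$1$ exclusion is clean and correct: if $U\simeq\Z$, then $g^\ast$ acts by $\pm 1$, so $g^\ast\eta=c\,\eta^{\pm 1}$ and the pencil $\{\eta=\mathrm{const}\}$ is $g$-invariant, contradicting loxodromy.

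The genuine gaps are in the $r\geq 2$ half, and you have half-acknowledged them yourself. First, you never actually exclude $r\geq 3$. Choosing two independent units $\xi_1,\xi_2$ when $r\geq 3$ produces a $\pi$ that need not be $g$-equivariant (since $\Z\xi_1\oplus\Z\xi_2$ need not be $g^\ast$-stable), so your ``$g$-equivariant structure coming from $g^\ast$ on the character lattice'' is unavailable exactly when you need it for the dominance argument. Using all of $U$ instead maps $X_0$ into $\Gm^r$ with $r>2$, where the image is a $2$-dimensional and a priori non-degenerate subvariety, and ruling this out requires a separate argument about monomial symmetries of surfaces in higher-dimensional tori that you have not supplied. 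Second, even granting $r=2$ and dominance, ``dominant and generically finite and étale'' does not give an isomorphism: an open immersion $\Gm^2\setminus C\hookrightarrow\Gm^2$ for a hypersurface $C$ is dominant, generically finite (degree $1$), and étale without being finite, and its source is affine. Closing that hole needs either a properness argument or a separate rank computation (the unit rank of $\Gm^2\setminus C$ is $\geq 3$ when $C$ is principal, so this loops back to the first gap). Finally, the assertion that the branch divisor is $g$-invariant and hence impossible by Theorem~\ref{thm:Abboud_invariant_curve} uses $g$-equivariance of $\pi$, which again only holds after you know $r=2$ and take a $g^\ast$-stable basis --- so the logical order of your steps needs to be restructured as: $r=2$ first, then equivariance, then dominance, then ramification, then finiteness.
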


Our next result concerns parabolic automorphisms $f\colon X_0\to X_0$. With the notation from Section~\ref{sec:elliptic-parabolic-loxodromic}, in cases (b) and (b'), we shall denote by $\pi_0\colon X_0\dasharrow B$ the rational map $\pi_0:=(\pi_Y\circ \varphi^{-1})_{\vert X_0}$; its fibers are permuted by $f$. If $X_0$ and $f$ are defined over a field $\bfK$, then $\pi_0$ is defined on a finite extension of $\bfK$.

\medskip
 
\begin{thm-C} 
 Let $X_0$ be a normal, affine surface defined over a field $\bfK$ of characteristic $0$. 
Let $f$ be a parabolic automorphism of $X_0$, let $\pi_0\colon X_0\dasharrow B$ be its invariant fibration, and let $f_B\in \Aut(B)$ be the automorphism induced by $f$ and $\pi_0$. Then,  
\begin{enumerate}[\rm (1)]
\item $f$ is a Jonquières twist,
\item $\pi_0$ does not have base points in $X_0(\bfKbar)$,
\item  if $f_B$ has infinite order, then there is a non-constant and non-vanishing function $\xi\colon X_0\to \bfK^\times$. 
\end{enumerate}
Moreover,  if $f_B$ has infinite order then $X_0$ is smooth.
\end{thm-C}

As we shall see below, Jonquières twists exist on some regular and some singular affine surfaces, for instance, on singular Markov surfaces (see~\S\ref{par:cayley_cubic}). 
 
 \medskip

\noindent\emph{Preliminary remarks.} To prove Theorem~C, we assume that $\bfK$ is algebraically closed. Indeed, if $\xi$ is a non-vanishing function defined over some Galois extension $\bfL$ of $\bfK$, then the product of its Galois conjugate provides a non-vanishing function defined over $\bfK$.

In the proof, we shall freely use some known results on rational fibrations of surfaces. In particular, as explained in~\cite{beauville:surfaces_asterisque}, if $Z$ is a smooth projective surface and $\pi\colon Z\to B$ is a fibration whose general fiber is a smooth rational curve, then there are no multiple fibers, and every singular fiber $F=\sum_i n_i C_i$ is 
made of smooth rational curves $C_i$ of negative self intersection, with at least one having self intersection $-1$. In particular, by blowing down such $-1$ curves and iterating this process, one obtains a relatively minimal surface $W$ with a ruling $\pi_W\colon W\to B$. 

\begin{proof}[Proof of Theorem~C]

Let $X$ be a projective completion of $X_0$. Denote by $\partial X$ the complement of $X_0$ in $X$; we may and do assume that $X$ is smooth near $\partial X$ and that $\partial X$ is a normal crossing divisor. By a theorem of Goodman~\cite{Goodman:Annals} (see p. 166, Cor. of its Thm. 1), $\partial X$ is connected. 

$\bullet$ Let us prove Assertion~(1) by contradiction. 

Otherwise, $f$ is a Halphen twist and according to Case (b') of Section~\ref{sec:elliptic-parabolic-loxodromic}, $f$ is birationally conjugate to an automorphism $f_Y$ of some projective surface~$Y$.
This contradicts Proposition~\ref{pro:not_conjugate_automorphism_projective}, and $f$ is therefore a Jonquières twist. 
 
 \smallskip

$\bullet$ The rational map $\pi_0$ extends to a rational map $\pi\colon X\dasharrow B$, where $B$ is a smooth projective curve. 
Let  $Z$ be a smooth projective surface and $\psi\colon Z\to X$ be a birational morphism on which $\pi_Z:= \pi\circ \psi$ is regular. The surface $Z$ is obtained from $X$ by a sequence of blow-ups, and we choose $Z$ to minimize this number of blow-ups. 
Doing so, $f$ lifts to a
birational transformation $f_Z$ of $Z$ that preserves the Zariski open subset 
$$\U_Z=\psi^{-1}(X_0)$$ 
and acts on it as an automorphism. The total transform of $\partial X$ in $Z$ will be denoted by $\partial Z$.  

\begin{rem}\label{rem:base_points_jonquieres_fibrations} If $\pi_0$ (or more precisely the pencil defined by $\pi_0$) had base points in $X_0$, 
then $\psi^{-1}$ would have indeterminacy points in $X_0$. 
We denote by $I_Z\subset \U_Z$ the total transform of the base points of $\pi_0$ by $\psi$. If non-empty, $I_Z$ is a curve that intersects the general fiber of $\pi_Z$ and is disjoint from $\partial Z$.\end{rem}

\begin{rem}
If $X_0$ has a singularity, then the singular locus of $X_0$ is a finite, $f$-invariant set, and its preimage in $Z$ is an invariant curve in $\U_Z$ (this curve can share some components with $I_Z$). 
\end{rem}

The general fiber of $\pi_Z$ is a smooth rational curve. Thus, as explained in the preliminary remarks,  there is a ruled surface $W$, with ruling $\pi_W\colon W\to B$, and a birational morphism $\eta\colon Z\to W$ such that $\pi_Z=\pi_W\circ \eta$. 
The fibers of $\pi_W$ are smooth rational curves of self-intersection $0$, and the fibers of $\pi_Z$ are obtained from them by a finite number of blow-ups.
Thus, if a fiber of $\pi_Z$ is not smooth, it is a tree of smooth rational curves of negative self-intersection. 

Let $E$ be an irreducible component of $\partial Z$. If $\pi_Z$ maps $E$ to a point, then $E$ is an irreducible component of a fiber and, as such, it is a smooth rational curve. Such a curve $E\subset \partial Z$ will be called a {\bf{vertical boundary component}}. 
If $C$ is an irreducible component of a fiber that intersects $\U_Z$, we say that $C$ is an {\bf{internal component}}. 

\smallskip

$\bullet$ Now, we modify $Z$ as follows. If there is a vertical boundary component of self-intersection $-1$, then we contract it. This gives a new surface $Z'$, with a birational morphism $Z\to Z'$ which is an isomorphism from $\U_Z$ to its image, denoted by $\U_{Z'}$; and $f_Z$ induces a birational transformation $f_{Z'}$ of $Z'$ that preserves $\U_{Z'}$, acting as an automorphism on it. The fibration $\pi_Z$ induces a fibration $\pi'\colon Z'\to B$.
By definition, the complement of $\U_{Z'}$ is the boundary $\partial Z'$. 

We repeat this process until we obtain a surface $Y$, with a fibration $\pi_Y\colon Y\to B$, and a birational morphism 
$\eta'\colon Z\to Y$ such that 
\begin{enumerate}[(a)]
\item $\eta'$ contracts only vertical boundary components and is an isomorphism from $\U_Z$ to its image, which we denote by $\U_Y$;
\item $\pi_Z=\pi_Y\circ \eta'$ and $\pi_Y$ does not have any vertical boundary component with self intersection $(-1)$;
\item $\eta'$ conjugates $f_Z$ to a birational transformation $f_Y$ of $Y$ that preserves the fibration $\pi_Y$ (permuting its fibers) and preserves the open set $\U_Y$, acting regularly on it. 
\end{enumerate}
Again, we denote by $\partial Y$ the complement of $\U_Y$. We can moreover assume (changing $W$ if necessary) that there is a birational morphism $\eta''\colon Y\to W$ such that $\eta=\eta''\circ \eta'$.
Then,
\begin{enumerate}[(a)]
\item[(d)] a vertical boundary component $E\subset \partial Y$ is either a smooth rational curve of self-intersection $0$, in which case it coincides with a fiber of $\pi_Y$, or a smooth rational curve with self-intersection $\leq -2$, in which case the fiber $F_b=\pi_Y^{-1}(b)$ containing $E$ is a tree of rational curves that contains at least one internal component with self-intersection $-1$;
\item[(e)] $f_Y$ permutes the internal components of the fibers (none of them is contracted by $f_Y$).
\end{enumerate}

$\bullet$ We say that a fiber $F_b=\pi_Y^{-1}(b)$ of $\pi_Y$ is {\bf{mobile}} if the orbit of $b$ under $f_B$ is infinite. 
Let us prove that 
\begin{enumerate}[(a)]
\item[(f)] mobile fibers are smooth and irreducible, and do not contain indeterminacy points of $f_Y$ or $f_Y^{-1}$;
they do not contain vertical boundary components.
\item[(g)] If a fiber $F_b$ contains an internal component, then it does not contain any indeterminacy points (for any iterate of $f_Y$).
\end{enumerate}
 
To prove (f), let $F_b$ be a mobile fiber. Since the $f_B$-orbit of $b$ is infinite, there is a positive integer $n$ such that $F_{f_B^n(b)}$ is a smooth, irreducible, internal curve that avoids indeterminacy points of $f_Y$. The proper transform of $F_{f_B^n(b)}$ 
by $f_Y^{-n}$ is an internal component $C$ of $F_b$, and is the only one. Thus, by (d), either $F_b$ coincides with it, and is smooth and irreducible, or $C^2=-1$ and the other irreducible components $E_i$ of $F_b$ are contained in $\partial Y$. In the latter case, $f_Y^n$ should contract the $E_i$. But, writing a resolution 
of the indeterminacies of $f_Y^n$ along $F_b$, one sees that the first curve $f_Y^n$ should contract is $C$, since by (d) the $E_i$ have self-intersection $\leq -2$. This is a contradiction, because $C$ is internal. Hence, $F_b$ is smooth, irreducible, and is equal to the internal curve $C$.

This result can be applied to $F_{f_B^n(b)}$ for every $n\in \Z$. If $F_b$ contained an indeterminacy point of $f_Y$, $F_{f_B(b)}$ would contain a boundary component or would not be smooth, and we would obtain a contradiction. Therefore, $F_b$ does not contain any indeterminacy point of $f_Y$ (resp. of $f^{-1}$).

To prove (g), we can now assume that $F_b$ is not mobile, hence that it is fixed. We argue as before: in a minimal resolution of the indeterminacies of $f_Y$ along $F_b$, the only curve one could be contracted would be the strict transform of an internal component, and this is not allowed.

\begin{rem}\label{rem:base_points_jonquieres_fibrationsII}  
As in Remark~\ref{rem:base_points_jonquieres_fibrations}, suppose that $\pi_0$ has a base point. Let $I_Y$ be the image of $I_Z$ in $Y$.
The curve $I_Y$ intersects the general fiber of $\pi_Y$, and hence all of them. Since $I_Y$ is contained in $\U_Y$, every fiber contains an internal component. By Property (g), $f_Y$ has no indeterminacy points, which contradicts $f_Y$ being a Jonquières twist (see Item~(b) in Section~\ref{sec:elliptic-parabolic-loxodromic}). 
Thus, {\sl{if $f$ is an automorphism of an affine surface $X_0$ and is a Jonquières twist, then its invariant fibration $\pi_0$ is regular on $X_0$ (it does not have base points)}}. This proves Assertion~(2) of the theorem.
\end{rem}

$\bullet$ Up to now, we have not assumed $f_B$ to be of infinite order. We now make this hypothesis and identify $B$ and $f_B$:

\begin{enumerate}[(a)]
\item[(h)] $B$ is a rational curve, $f_B$ is conjugate to a translation $\vb\mapsto\vb+1$ or to a scalar multiplication 
$\vb\mapsto \alpha\vb$, where $\alpha$ is not a root of $1$. In particular, $f_B$ has either $1$ or $2$ fixed points, and the
orbit of a point that is not fixed is infinite.
\end{enumerate}
 
Indeed, if $B$ is not a rational curve, then it is a curve of genus $1$ and every orbit of $f_B$ is infinite. Thus, 
$Y$  does not contain any vertical boundary component (by (f)), 
and  $f_Y$ is regular (again by (f)). On the other hand, a Jonquières twist is never a regular automorphism (see Item~(b) in Section~\ref{sec:elliptic-parabolic-loxodromic}). Property (h) follows from this contradiction.

As a consequence, with Property~(f) we get

\begin{enumerate}[(a)]
\item[(i)] a fiber of $\pi_Y$ which is not above a fixed point of $f_B$ is mobile; it is smooth and irreducible, and it does not contain any indeterminacy point (of $f^n$, for any $n\in \Z$). 
\end{enumerate}

$\bullet$ Since every fiber of $\pi_0$ intersects $\partial X$, there is a component $D$ of $\partial Z$ such that $\pi_Z$ maps $D$ onto $B$. We continue to denote by $D$ its image in $Y$. Since $f_Y$ preserves the fibration, it cannot contract
$D$: $f_Y$ permutes the components of $\partial Y$ which are mapped onto $B$ by $\pi_Y$. Changing $f_Y$ into 
some positive iterate $f_Y^k$, we may assume that these components are fixed by $f_Y$.
Doing so, $f_Y$ determines an automorphism $f_D$ of $D$, and the restriction of $\pi_Y$ to $D$ is a dominant morphism $\pi_D\colon D\to B$
 such that $\pi_D\circ f_D=f_B\circ \pi_D$. 

\begin{enumerate}[(a)]
\item[(j)] The ramification points of the morphism $\pi_D\colon D\to B$ are mapped to fixed points of $f_B$.
If $E$ is a reduced curve that is a component of a fiber (with some multiplicity $\geq 1$), then $E$ is transverse to $D$.
\end{enumerate}

Indeed, suppose that $D$ is tangent to the fibration at some point $q$. If $\pi_Y(q)$ is not a fixed point of 
$f_B$, then $f_Y^n$ is regular on a neighborhood of $q$ for all $n\in \Z$ (by Property~(i)), and the orbit of $q$ is 
infinite. Thus, $D$ should be tangent to the fibration on an infinite set, a contradiction. Thus, $\pi_Y(q)$ is a fixed point of $f_B$. The same argument applies to ramification points of $\pi_D$. 


If $f_B(\vb)=\vb+1$, then $f_D$ is also a translation, and $\pi_D$ has a unique ramification point. This is a contradiction since a map from $\P^1$ to $\P^1$ which is not étale must have at least $2$ ramification points.
Thus, $\pi_D$ is an isomorphism onto $B$ (because $B\simeq \P^1$), and $D$ is everywhere transverse to the fibration.

If $f_B(\vb)=\alpha\vb$, then $f_D$ is also a scalar multiplication, and $\pi_D$ has $2$ ramification points, one above $0$ and one above $\infty$. Note that one of the fibers $F_0$ or $F_\infty$ is contracted by $f_Y$, because $f_Y$ is a Jonquières twist (see, again, Item~(b) in Section~\ref{sec:elliptic-parabolic-loxodromic}).
 On the other hand, 
if a fiber $F$ is contracted by a Jonquières twist, then any invariant irreducible curve that is generically transverse to the 
fibration must intersect $F$ transversally (see Section~3, and in particular Lemma~3.28 in \cite{Zhao:Pisa}). This concludes the proof of Property~(j). 


\smallskip

$\bullet$ If $f_B(\vb)=\vb+1$ the fiber $F_\infty$ is the only fiber contracted by $f_Y$. From Properties (d) and  (g), this fiber is a smooth boundary component. Thus, by Property (f), $Y$ is ruled. 
Doing elementary transformations along $F_\infty$, we do not modify $\U_Y$ and we can construct a birational 
map $\epsilon \colon Y\to \P^1\times \P^1$ that transforms $\pi_Y$ to the first projection, maps $D$ to the section $\P^1\times \set{\infty}$, and transforms $F_\infty$ to $\set{\infty}\times \P^1$. Then, using affine coordinates $(\vx,\vy)$ on $\P^1\times \P^1$, $\epsilon$ conjugates $f_Y$ (i.e.\ $f_Y^k$) to a map of type 
$$(\vx,\vy)\mapsto (\vx+1, a(\vx)\vy+b(\vx))$$
where $a(\vx)$ is some regular function of $\vx\in \A^1$ that does not vanish, since otherwise a mobile fiber would be contracted. Thus, $a$ is a constant, and this contradicts the fact that $f_Y$ is a Jonquières twists (the degree of the iterates would be bounded). So, the case $f_B(\vb)=\vb+1$ does not appear.

\smallskip

$\bullet$  If $f_B(\vb)=\alpha\vb$, we apply the same strategy. At least one fiber among $F_0$ and $F_\infty$ is contracted, say $F_\infty$; it is a smooth irreducible component of $\partial Y$. 

Assume first that $F_0$ does not contain any internal component. Then it is smooth and contained in $\partial Y$. 
Doing elementary transformations centered on $F_\infty$, we get a map $\epsilon \colon Y\to \P^1\times \P^1$ that
maps $\U_Y$ into $(\P^1\setminus\set{0,\infty}) \times (\P^1\setminus \set{\infty}))$ and conjugates $f_Y^k$ to 
a map of type 
$$ (\vx,\vy)\mapsto (\alpha\vx, a(\vx)\vy+b(\vx))$$
where $a$ is non-constant and does not vanish on $\P^1\setminus\set{0,\infty}$. Thus, $a\circ \epsilon$ is the desired non-constant, non-vanishing function we were looking for. 

Assume now that $F_0$ contains an internal component. Then $f_Y$ acts regularly on $Y\setminus F_\infty$.
Let us contract the $(-1)$-components of $F_0$ iteratively to reach a ruled surface $W$; this process may modify 
$\U_Y$. With elementary transformations based on $F_\infty$, we construct a birational map $\epsilon\colon W\to \P^1\times \P^1$ as above. It maps $D$ to $\P^1\times\set{\infty}$ and conjugates $f_Y$ to a map of type 
$$ (\vx,\vy)\mapsto (\alpha\vx, a(\vx)\vy+b(\vx))$$ 
where $a$ is not constant and does not vanish if $\vx\neq 0$. Thus, 
$a(\vx)=\beta\vx^d$ for some $d\neq 0$ and $\beta\neq 0$. Since $d\neq 0$, the fiber $\set{0}\times \P^1$ is 
contracted, hence so is $F_0$. Thus, in fact, $F_0$ is smooth and contained in $\partial Y$, contradicting our hypothesis.

\begin{rem} We have excluded the case $f_B(\vb)=\vb+1$ and we have proven that $F_0$ and $F_\infty$ do not contain any internal components. Therefore, every point of $\U_Y$ has an infinite orbit (since its projection under $\pi_Y$ also has an infinite orbit). In particular, $X_0$ is smooth, since otherwise a singularity would create a fiber of $\pi_Y$ with a finite orbit intersecting $\U_Y$. This remark proves the last sentence of Theorem~C.\end{rem}

\smallskip

$\bullet$ To conclude, we need to go back to $X_0$. In the last step, we have constructed a regular function $\xi:=a\circ \epsilon$ on $\U_Y$ that is non-constant and does not vanish. But $\U_Y$ is isomorphic to $\eta'(\psi^{-1}(X_0))$, and 
$\psi\circ(\eta')^{-1}\colon Y\dashrightarrow Z\to X$ is obtained by a sequence of blow-ups (of the base points of $\pi_0$). By construction, $\xi\circ\eta'\circ \psi^{-1}$ is regular 
and does not vanish outside the indeterminacy points of $\psi\circ(\eta')^{-1}$. Thus, it extends to a regular, non-vanishing function on $X_0$ (because $X_0$ is normal). This concludes the proof of Theorem~C. 
\end{proof}

 \begin{cor} Let $X_0$ be an affine surface. Assume $\Gamma\subset \Aut(X_0)$ is non-elementary and contains a parabolic element acting 
 as an automorphism of infinite order on the base of its invariant fibration. Then, $X_0$ is isomorphic to $\Gm\times\Gm$ over $\bfKbar$. 
 \end{cor}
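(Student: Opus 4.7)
The plan is to combine Theorem~C (assertion~(3)) with Theorem~\ref{thm:Iitaka-Abboud}. The hypotheses of the corollary give us exactly the two ingredients needed for the latter: a loxodromic automorphism of $X_0$, and a non-constant, non-vanishing regular function $X_0\to \bfK^\times$.

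First, I would extract a loxodromic element from $\Gamma$. Since $\Gamma$ is non-elementary, by definition it contains a non-abelian free subgroup all of whose non-trivial elements are loxodromic (this is the definition recalled in the paragraph on non-elementary groups). In particular, $\Aut(X_0)$ contains a loxodromic automorphism, which verifies hypothesis~(i) of Theorem~\ref{thm:Iitaka-Abboud}.

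Next, I would invoke the parabolic element. By hypothesis, $\Gamma$ contains a parabolic $f\in \Aut(X_0)$ with invariant fibration $\pi_0\colon X_0\dashrightarrow B$ such that the induced $f_B\in \Aut(B)$ has infinite order. Applying Theorem~C to $f$, assertion~(3) produces a non-constant, non-vanishing regular function $\xi\colon X_0\to \bfK^\times$, which verifies hypothesis~(ii) of Theorem~\ref{thm:Iitaka-Abboud}. (Assertions~(1) and~(2) of Theorem~C, and the last sentence ensuring $X_0$ is smooth, are not needed here, although the smoothness is reassuring.)

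With both hypotheses in hand, Theorem~\ref{thm:Iitaka-Abboud} yields that $X_0$ is isomorphic to $\Gm\times\Gm$ over $\bfKbar$, which is the conclusion. There is essentially no obstacle: the entire content of the corollary has been packaged into Theorems~C and~\ref{thm:Iitaka-Abboud}, so the proof is a two-line deduction. The only thing to check carefully is that ``non-elementary'' really does supply a loxodromic element, but this is immediate from the definition recalled just above (a non-abelian free subgroup of loxodromic elements is contained in $\Gamma$).
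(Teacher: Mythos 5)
Your proof is correct and follows exactly the same two-step route as the paper: apply Theorem~C~(3) to the parabolic element to obtain a non-constant non-vanishing function, then combine with the loxodromic element supplied by non-elementarity and invoke Theorem~\ref{thm:Iitaka-Abboud}. The extra sentence unpacking why non-elementary yields a loxodromic element is fine but is exactly what the paper leaves implicit.
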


Indeed, Theorem~C shows that there is a non-constant and non-vanishing regular function on $X_0$. Since $\Gamma$ contains a loxodromic element, Theorem~\ref{thm:Iitaka-Abboud} shows that $X_0$ is isomorphic to $\Gm\times \Gm$.

 \begin{cor}\label{cor:application2_of_thm_C} 
 Let $X_0$ be an affine surface that is not isomorphic to $\Gm\times\Gm$ over $\bfKbar$. Assume that 
 $\Aut(X_0)$ contains a loxodromic automorphism. If $f$ is a parabolic element of $\Aut(X_0)$ then 
 \begin{enumerate}[\rm(1)]
 \item $f$ preserves a regular fibration $\pi_0\colon X_0\to B$ with base and fibers of genus~$0$;
 \item  a positive iterate $f^k$ of $f$ preserves each fiber of this fibration;
 \item  the Zariski closure of every 
 orbit of $f$ has dimension $0$ (if the orbit is finite) or $1$ (if the orbit is infinite, in which case its closure is a finite number of fibers of $\pi_0$). 
 \end{enumerate}
 \end{cor}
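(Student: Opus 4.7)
My plan is to derive the corollary from Theorem~C, using Theorem~\ref{thm:Iitaka-Abboud} to exclude the infinite-order case for $f_B$. Since $\Aut(X_0)$ contains a loxodromic automorphism and $X_0$ is not isomorphic to $\Gm\times\Gm$ over $\bfKbar$, Theorem~\ref{thm:Iitaka-Abboud} rules out the existence of any non-constant, non-vanishing function $\xi\colon X_0\to\bfK^\times$. The contrapositive of Theorem~C(3) then forces the induced automorphism $f_B$ to have finite order, say $k$. Combined with Theorem~C(1) and~(2), this yields the genus-$0$ general fibers of the Jonquières fibration, the regularity of $\pi_0$ on $X_0$, and, from $f_B^k=\Id_B$, assertion~(2) of the corollary.

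The step I expect to be the main obstacle is showing that the base $B$ itself has genus $0$. I would argue by contradiction. Extending $\pi_0$ to a smooth projective completion of $X_0$ and resolving indeterminacies, one obtains a smooth projective surface $Y$ with a genuine fibration $\pi_Y\colon Y\to B$ whose general fiber is $\P^1$; in particular $Y$ is birational to $B\times\P^1$. If $B$ had genus $\geq 1$, then the rationally connected fibers and the non-rational base would force $\pi_Y$ to coincide with the MRC fibration of $Y$, which is preserved by every birational self-map of $Y$; any such birational map would induce an automorphism of $B$ and restrict on the generic fiber to an element of $\PGL_2(k(B))$, so its dynamical degree would equal $1$. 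This contradicts the fact that the loxodromic $g\in\Aut(X_0)$ induces a birational transformation of $Y$ with dynamical degree $>1$, this being a birational invariant. Hence $B$ has genus $0$, completing assertion~(1).

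Assertion~(3) will then follow by dimension counting. Any $f^k$-orbit lies in a single fiber of $\pi_0$, a $1$-dimensional subvariety of $X_0$, so its Zariski closure has dimension at most $1$. An $f$-orbit is the union of at most $k$ such $f^k$-orbits, spread across finitely many fibers $F_b,\ldots,F_{f_B^{k-1}(b)}$. If the $f$-orbit is finite, its Zariski closure is $0$-dimensional; otherwise at least one constituent $f^k$-orbit is infinite, hence Zariski dense in its irreducible component, so the Zariski closure of the $f$-orbit has dimension $1$ and is contained in a finite union of fibers of $\pi_0$.
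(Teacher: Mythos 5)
Your proof is correct, and the skeleton matches the paper's intent: derive the Jonquières property and the regularity of $\pi_0$ from Theorem~C(1)--(2), and rule out infinite order of $f_B$ via the contrapositive of Theorem~C(3) combined with Theorem~\ref{thm:Iitaka-Abboud}. The paper's written proof is a single line citing Theorem~C(2) (and implicitly the preceding unlabeled corollary for the finite-order claim), so most of the reasoning is left implicit there.

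Where you genuinely add content is the genus of the base $B$. In the paper's terminology a ``genus~$0$ fibration'' constrains only the fibers, and property~(h) in the proof of Theorem~C (which gives $B$ rational) is established only under the hypothesis that $f_B$ has infinite order --- precisely the case that is excluded here. Your MRC argument correctly fills this gap: a ruled surface over a curve of genus~$\geq 1$ has a unique rational fibration (the MRC fibration, equivalently the Albanese factorization), which is preserved by every birational self-map, so no such surface supports a birational transformation of dynamical degree~$>1$; this contradicts the existence of the loxodromic $g\in\Aut(X_0)$, since $\lambda_1$ is a birational invariant. That is a clean, standard argument and a strictly more complete treatment than what the paper records.

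One small looseness in your treatment of assertion~(3): you establish that the Zariski closure of an infinite $f$-orbit has dimension~$1$ and is contained in a finite union of fibers, whereas the corollary asserts it \emph{is} a finite union of fibers. To close that gap one would add that the general fiber is irreducible and that an infinite $f^k$-orbit in an irreducible affine curve is Zariski dense in that curve; this matches the same level of informality as the paper's own phrasing, so it is a minor point.
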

 
Indeed, $\pi_0$ does not have base points in $X_0$ by Assertion~(2) in Theorem~C (or Remark~\ref{rem:base_points_jonquieres_fibrationsII}).

\subsection{Finite orbits of parabolic automorphisms} For a subring $R$ of $\bfK$, denote by $\Per_f(R)$ the set of periodic points of $f$ contained in $X_0(R)$.

\begin{thm}\label{thm:parabolic_periodic_algebraic}
Let $X_0$ be an affine surface defined over the valuation ring $\VR$ of a $p$-adic field $\bfK$. 
Let $f$ be a parabolic automorphism of $X_0$ defined over $\VR$. Then, there is an integer $m\geq 1$ 
and a Zariski closed subset $P_f$ of $X_0$ such that 
$ \Per_f(\VR)=P_f(\VR)=\set{x\in X_0(\VR)\; ; \; f^m(x)=x}.$
\end{thm}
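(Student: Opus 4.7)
My plan is to apply Theorem~C to $f$, assuming (with no loss of generality, after passing to the normalization) that $X_0$ is normal. Theorem~C then furnishes a regular $f$-invariant fibration $\pi_0\colon X_0\to B$ with induced $f_B\in\Aut(B)$, and I split according to the order of $f_B$.

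If $f_B$ has infinite order, the proof of Theorem~C shows that $f_B$ is conjugate to $\vb\mapsto\alpha\vb$ with $\alpha$ not a root of unity, and that the fibers over the two fixed points of $f_B$ lie entirely in $\partial Y$, hence outside $\pi_0(X_0)$. Any $f$-periodic point in $X_0(\VR)$ would project to an $f_B$-periodic (hence fixed) point on $B$, which cannot lie in $\pi_0(X_0)$. Therefore $\Per_f(\VR)=\emptyset$, and one takes $P_f=\emptyset$ and $m=1$.

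Otherwise $f_B$ has finite order $d$; set $g:=f^d$, which preserves every fiber of $\pi_0$. Parabolicity of $f$ forces $g\neq\Id$, and then $g|_{F_\eta}$ must have infinite order on the generic fiber (otherwise $g^n=\Id$ on a Zariski dense open subset, hence on all of $X_0$). The central step is a uniform torsion bound: there exists $N\geq 1$, depending only on $\bfK$, such that every finite-order element of $\Aut_\bfK(F_b)$ (for any $b\in B(\VR)$) has order dividing $N$. Such an automorphism extends to an element $\phi$ of $\PGL_2(\bfKbar)$ defined over $\bfK$; lifting $\phi$ to $\GL_2(\bfKbar)$ produces two eigenvalues that, by Galois equivariance of the characteristic polynomial, lie in an extension $\bfK'/\bfK$ of degree at most $2$, and the ratio of these eigenvalues is a root of unity in $\bfK'$ whose order equals that of $\phi$. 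Since $\bfK$ is $p$-adic it admits only finitely many extensions of degree $\leq 2$, each with a finite cyclic group of roots of unity, so the least common multiple of these orders furnishes a valid $N$.

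With the bound in hand, let $x=(b,y)\in X_0(\VR)$ be periodic under $g$. If $g|_{F_b}$ has finite order, then $g^N(x)=x$. If $g|_{F_b}$ has infinite order, then $F_b$ being rational, the only periodic points of $g|_{F_b}$ are its (at most two) fixed points, because every infinite-order element of $\PGL_2(\bfKbar)$ is conjugate either to a non-torsion dilation $y\mapsto\lambda y$ or to a translation, whose periodic sets coincide with their fixed sets; hence $g(x)=x$ and a fortiori $g^N(x)=x$. The converse is immediate, so $\Per_f(\VR)=\set{x\in X_0(\VR): f^{dN}(x)=x}$, and it suffices to take $P_f:=\set{x\in X_0: f^{dN}(x)=x}$, a Zariski closed subscheme, with $m:=dN$. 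The principal technical point is the uniform torsion bound; the rest follows directly from the structure provided by Theorem~C.
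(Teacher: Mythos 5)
Your proposal is correct, and its central engine is the same as the paper's: the uniform bound on the order of roots of unity lying in quadratic (more generally, bounded-degree) extensions of a $p$-adic field, applied to torsion elements of $\PGL_2$ acting on the fibers of the Jonqui\`eres fibration. The packaging of the finite-order-$f_B$ case is essentially the paper's argument run in reverse: the paper phrases the conclusion as ``the set of fibers on which $f$ restricts to a torsion automorphism is finite,'' but the uniform bound $N$ it extracts along the way is exactly what you need to write $P_f=\{f^{dN}=\Id\}$ directly, and you both appeal (explicitly or implicitly) to the fact that an infinite-order element of $\Aut$ of an irreducible affine rational curve has only fixed points among its periodic points.

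The one genuine point of divergence is your treatment of the infinite-order-$f_B$ case. The paper handles it uniformly with the curve lemma: periodic points lie over the (at most two) $f_B$-fixed points of $B$, and one applies the lemma fiberwise. You instead assert $\Per_f(\VR)=\emptyset$ outright, using the fact --- established inside the \emph{proof} of Theorem~C (in the remark at the end, ``every point of $\U_Y$ has an infinite orbit'') but not in its statement --- that when $f_B$ has infinite order the two fibers over $\mathrm{Fix}(f_B)$ sit entirely in the boundary and miss $X_0$. This is a valid and slightly stronger conclusion, and it streamlines your case distinction, but it does commit you to invoking an internal step of another proof rather than a stated result; the paper's route via the curve lemma is more self-contained and avoids that dependence. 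Both are fine. One further minor remark common to both proofs: the fiberwise argument is cleanest over the open subset of $B$ where the ruling is trivialized as $U_0\times\P^1$; the finitely many remaining (possibly reducible) fibers over $B(\VR)$ need to be handled component by component, after passing to a power of $f$ fixing each component --- neither write-up spells this out, but it is a routine addition.
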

Of course, $P_f$ and $m$ depend on $f$ and on $\bfK$. The same result does not hold on the algebraic closure of $\bfK$: see Example~\ref{eg:finite_orbits_monomial}.


\begin{lem}
Let $f$ be an automorphism of an irreducible, affine curve $F\subset \A^N$. If $x\in F$ is $f$-periodic, then 
either $f$ has finite order in $\Aut(F)$ or $f(x)=x$. 
\end{lem}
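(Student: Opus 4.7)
The plan is to pass to the normalization, where the classification of automorphisms of smooth irreducible affine curves is elementary, and then descend back to $F$.

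First, let $\pi\colon \tilde F\to F$ be the normalization; since $\pi$ is finite and birational, $f$ lifts uniquely to an automorphism $\tilde f\in\Aut(\tilde F)$. The preimage $\pi^{-1}(x)$ is a non-empty finite set, and the relation $\pi\circ\tilde f^n=f^n\circ\pi$ shows that $\tilde f^n$ permutes it; so a positive power $\tilde f^M$ fixes every point of $\pi^{-1}(x)$ pointwise, and in particular has at least one fixed point in $\tilde F$. The goal is now to show that either $\tilde f$ has finite order (so that $f$ does, and we are done) or $\tilde f$ itself fixes $\pi^{-1}(x)$ pointwise (so that $f(x)=x$).

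Next, I invoke the classification of smooth irreducible affine curves. Write $\tilde F$ as the complement of a non-empty finite set $D$ in a smooth projective curve $\overline{\tilde F}$ of genus $g$; then $\Aut(\tilde F)\hookrightarrow\Aut(\overline{\tilde F},D)$. If $g\geq 2$, the full group $\Aut(\overline{\tilde F})$ is finite by Hurwitz. If $g=1$, translations preserving $D$ form a finite subgroup of $\overline{\tilde F}$ and $\Aut(\overline{\tilde F},0)$ is finite, so $\Aut(\overline{\tilde F},D)$ is finite. If $g=0$ and $|D|\geq 3$, any element of $\Aut(\P^1,D)$ is determined by its action on three chosen points, so $\Aut(\P^1,D)$ is finite. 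In each of these cases $\tilde f$, and hence $f$, has finite order.

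The only remaining cases are $\tilde F\simeq \Gm$ and $\tilde F\simeq \A^1$. If $\tilde F\simeq\Gm$, then $\tilde f(t)=at$ or $\tilde f(t)=a/t$; the second has order $2$, and the first admits a fixed point of $\tilde f^M$ in $\Gm$ only when $a^M=1$, which forces $\tilde f$ to have finite order. Finally, if $\tilde F\simeq \A^1$, write $\tilde f(t)=at+b$. If $a=1$, then $\tilde f$ is a translation, and the existence of a fixed point of $\tilde f^M$ forces $b=0$, so $\tilde f=\Id$. If $a\neq 1$, then $\tilde f$ has the unique fixed point $t_0=b/(1-a)$; assuming $\tilde f$ is of infinite order, $a^M\neq 1$, so $\tilde f^M$ has the same unique fixed point $t_0$. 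Since $\tilde f^M$ fixes every point of $\pi^{-1}(x)$, this forces $\pi^{-1}(x)=\{t_0\}$, and hence $f(x)=\pi(\tilde f(t_0))=\pi(t_0)=x$.

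The slightly delicate step is the final one: one must argue that a non-identity affine map and all its iterates share the same unique fixed point, which then forces $\pi^{-1}(x)$ to be a singleton via the pointwise-fixing property of $\tilde f^M$. Everything else is either a direct appeal to the classification of $\Aut$ of smooth affine curves or a one-line parameter computation on $\Gm$.
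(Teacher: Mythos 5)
Your proof is correct, and it takes a genuinely different route from the paper's. The paper works directly with the projective completion $\overline{F}$ of $F$ and counts fixed points: if $x$ has period $r\geq 2$ and there are $s\geq 1$ points at infinity, then $f^{(r+s)!}$ fixes at least three points of $\overline{F}$, from which it concludes $f^{(r+s)!}=\Id_F$. That quick count fully closes the argument when the normalization of $\overline{F}$ is $\P^1$; for higher genus one would instead invoke the finiteness of the automorphism group of a punctured curve of positive genus, a step the paper leaves implicit. You, on the other hand, pass to the normalization $\tilde{F}$ and go systematically through the classification of smooth affine curves: the automorphism group is finite in every case except $\tilde{F}\simeq\Gm$ or $\tilde{F}\simeq\A^1$, and in these two cases you argue directly with the linear/affine normal form of $\tilde{f}$. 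Your route is longer but spells out each case cleanly, and your final step --- an infinite-order affine map and all its iterates share the same unique fixed point, which pins down $\pi^{-1}(x)$ as a singleton and hence gives $f(x)=x$ --- is exactly the content compressed into the paper's fixed-point bookkeeping on $\P^1$. Both arguments yield the same lemma; yours trades brevity for explicit genus-by-genus rigor and, unlike the paper's phrasing, does not silently reduce to the case where the underlying projective curve is rational.
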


\begin{proof} Let $\overline{F}$ be the completion of $F$ in $\P^N$, and let $s$ be the number of points of $\overline{F}$ at infinity. 
Let $r$ be the size of the orbit of $x$. If $x$ is not fixed, we have $r\geq 2$ and $s\geq 1$ so $r+s\geq 3$. If we set $m=(r+s)!$, then $f^m$ fixes at least $3$ points of $\overline{F}$. Thus, $f^m=\Id_F$.  
\end{proof}

\begin{proof}[Proof of Theorem~\ref{thm:parabolic_periodic_algebraic}]
Let $\pi_0\colon X_0\to B$ be the $f$-invariant fibration, and let $f_B$ be the automorphism of $B$ induced by $f$. 

If $f_B$ has infinite order, then $f_B$ has at most two periodic points, which are fixed. Denote them by $\set{b, b'}$. Then, the finite orbits of $f$ are contained in the fibers $\pi_0^{-1}(b)$ and $\pi_0^{-1}(b')$, and  the theorem follows from the previous lemma, applied to the restriction of $f$ to $\pi_0^{-1}(b)$ and $\pi_0^{-1}(b')$.

Now, assume that $f_B$ is an element of order $1\leq \ell < \infty$ in $\Aut(B)$. Then, changing $f$ to $f^\ell$, we are reduced to the case $f_B=\Id_B$. Let $k$ be a positive integer such that $f^k$ fixes every irreducible component of every fiber of $\pi_0$. From 
the previous lemma, the problem is to show that the number of points $b\in B(\VR)$ such that $f_{\vert F_b}$ has finite order is finite. For this, we can reduce the study to regular fibers. But, in some completion $X$ of $X_0$, every regular fiber has a neighborhood $U$ isomorphic to $U_0 \times \P^1$ (for some $U_0\subset B$, see~\cite[Thm. III.4]{beauville:surfaces_asterisque}), on which $f$ acts by 
\begin{equation}
f(b,z)=(b, A_b(z))
\end{equation}
where $A\colon b\in B\mapsto A_b\in \PGL_2(\bfKbar)$ is a non-constant rational map. Both $\pi_0$ and $A$ are defined on 
a finite extension $\bfL$ of $\bfK$. So, to conclude, we only need to show that the set 
\begin{equation}
\set{b\in B(\VRL)\; ; \; A_b \; \text{has finite order in} \; \PGL_2(\bfL)}
\end{equation}
is finite. To see this, we lift $A_b$ to an element $A'_b$ in $\GL_2(\overline{\bfL})$ and set 
\begin{equation}
r(b)=\frac{\Tr(A'_b)^2}{\det(A'_b)}-2;
\end{equation}
then the order of $A_b$ is finite if and only if $r(b)=\alpha+\alpha^{-1}$ for some root of unity $\alpha\in \overline{\bfL}$; more precisely, $A_b$ is conjugate to $z\mapsto \alpha z$ and the order of $\alpha$ is the order of $A_b$. 
When $b$ is in $B(\VRL)$, $r(b)$ is in $\bfL$ and $\alpha$ is a root of unity in $\overline{\bfL}$ that satisfies a 
quadratic equation over $\bfL$. The structure of the multiplicative group of finite extensions of $\Q_p$ (see~\cite{Neukirch}, p.140, or Section~\ref{par:multiplicative_groups_arithmetic} below) implies that the order of such a root of unity is uniformly bounded, independently of $r(b)\in \bfL$. This concludes the proof. 
\end{proof}

\begin{rem}\label{rem:L_for_parabolic}
If $f_B=\Id_B$ and, in some invariant bidisk $\U\simeq \VR^2$, $f=\Phi^1$ for some flow $\Phi^t\colon \VR\times\U\to \U$, then 
the vector field $\Theta_f=\partial_t\Phi^t_{\vert t=0}$ is everywhere tangent to the fibers of $\pi_0$. Thus, we may extend the distribution of lines $x\mapsto \bfK \Theta_f(x)$ globally as 
the algebraic distribution of tangent lines $x\mapsto \Ker(d\pi_{0,x})$. Note that $\Theta_f$ vanishes identically on each fiber $F_b$ on which $f$ induces a finite order automorphism. 
\end{rem}


\section{Elliptic automorphisms}


\subsection{Bounded degrees}\label{par:bounded_degree_algebraic_completion} Let $f$ be an automorphism of an affine variety $V$, both defined over a field $\bfK$ of characteristic $0$.

If $(\deg(f^n))$ is bounded, then $f$ is contained in 
a linear algebraic group $G$ acting algebraically on $V$ (see~\cite[Prop. 3.2]{cantat-regeta-xie}, with $B$ as a point). 
The Zariski closure of $f^\Z$ in $G$ is an abelian algebraic subgroup $A$ of $G$; we call it the \emph{algebraic completion of $f^\Z$}. Let $A^\circ$ denote the (Zariski) connected component of the identity in $A$, and let $k$ be the index of $A^\circ$ in $A$; then, $f^k$ is 
contained in $A^\circ$, and its iterates form a Zariski dense subset of $A^\circ$. In what follows, we assume that $\dim(A^\circ)\geq 1$; equivalently, the order of $f$ in $\Aut(X_0)$ is infinite. 

On the algebraic closure $\bfKbar$ of $\bfK$, $A^\circ$ is isomorphic to $\Ga^r\times \Gm^s$ for some integers $r,s\geq 0$ (here we use ${\mathrm{char}}(\bfK)=0$).
Since the cyclic group $(f^k)^\Z$ is Zariski dense in $A^\circ$, we have $r\leq 1$.  
The algebraic subgroups of $\Gm^s$ form a countable family of subgroups, so the generic point of $X_0$ has a trivial stabilizer in $\Gm^s\subset A^\circ$. Since the only nontrivial algebraic subgroup of $\Ga$ is $\Ga$ itself, the general point of $X_0$ also has a trivial stabilizer in $\Ga\subset A^\circ$. Thus $r+s\leq \dim(V)$.

\subsection{The Lie algebra of $A^\circ$ and the Bell-Poonen theorem}\label{par:bell_poonen_elliptic}
Taking the derivative of the action of $A^\circ$ on $V$, the Lie algebra $\Lie(A^\circ)$ determines a subalgebra of the Lie algebra $\Gamma(V,TV)$ of regular vector fields on $V$. We identify $\Lie(A^\circ)$ with its image in $\Gamma(V,TV)$. 

Assume, now, that $\bfK$ is a $p$-adic local field, and that $V$ and $f$ are defined over its valuation ring $\VR$. As in the Bell-Poonen theorem, suppose 
 there exists an open set $\U\simeq \VR^2\subset V(\VR)$ preserved by $f$ on which $f=\Phi^1$ for some analytic flow $\Phi\colon \VR\times \U\to \U$. Then, $\Phi^t$ corresponds to a $1$-parameter subgroup of $A^\circ$. In particular, there is an algebraic vector field $a_f\in \Lie(A^\circ)$ such that 
 \begin{equation}
  \Theta_f(y)=\left( \frac{\partial \Phi^t(y)}{\partial t} \right)_{t=0}=a_f(y)
\end{equation}
for $y\in \U$. 
Thus, $\Theta_f$ extends from $\U$ to $V$ as a globally defined algebraic vector field. 

For each $y$ in $\U$ (resp. $V$), we denote by $L_f(y)$ the line $\bfK a_f(y)\subset T_yV$, with a small abuse of notation because this distribution of lines is not well-defined at the points $y$ where $a_f(y)=0$. (Here, in comparison with \S\ref{par:lie_dimension}, the lines are defined over $\bfK$ instead of $\Q_p$.) We have shown the following.

\begin{lem} 
	\label{lem:elliptic-generator-algebracity}
If $f$ is an elliptic automorphism, the distribution of tangent lines 
$$L_f\colon y\mapsto L_f(y):=\bfK\Theta_f(y)=\bfK a_f(y)$$ 
depends algebraically on $x$. In other words, it defines a rational section of the projectivized tangent bundle $\P(TV)$.
\end{lem}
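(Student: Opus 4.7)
The plan is to observe that essentially all the algebraic content is already contained in the paragraph immediately preceding the lemma, which establishes the pointwise identification $\Theta_f(y)=a_f(y)$ for $y\in\U$, where $a_f\in\Lie(A^\circ)\subset\Gamma(V,TV)$ is a regular (hence algebraic) vector field on the ambient variety $V$. The lemma is then just the projectivized reformulation of this identification, so there is very little left to do.

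Concretely, I would proceed in two short steps. First, I would check that $a_f$ does not vanish identically on $V$: otherwise the one-parameter subgroup $\Phi^t$ in $A^\circ$ generated by $a_f$ would be constant, so $\Phi^t=\Id$ for all $t\in\VR$; in particular $f|_\U=\Phi^1|_\U=\Id$, and by Zariski density of $\U$ in $V$ this would force $f=\Id_V$, contradicting the standing assumption $\dim(A^\circ)\geq 1$ (equivalently, that $f$ has infinite order). Second, since $a_f$ is a nonzero regular section of $TV$, its projectivization gives a morphism $y\mapsto \bfK\, a_f(y)$ from $V\setminus Z_f$ to $\P(TV)$, where $Z_f:=\{a_f=0\}$ is a proper Zariski closed subset of $V$; this is precisely a rational section of the structural projection $\P(TV)\to V$. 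Restricted to $\U\setminus Z_f$, it coincides with $y\mapsto \bfK\,\Theta_f(y)=L_f(y)$, so $L_f$ is the analytic restriction of a rational section of $\P(TV)$, which is exactly the claim.

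The main (and essentially only) obstacle is the nonvanishing of $a_f$, addressed in the first step above. The algebraicity is already packaged into the membership $a_f\in\Lie(A^\circ)$, and projectivizing a nonzero regular section of $TV$ automatically produces a rational section of $\P(TV)$, so no further geometric input is needed.
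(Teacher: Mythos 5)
Your proof is correct and follows the same route as the paper: the paper's own ``proof'' is literally the paragraph preceding the lemma (``We have shown the following''), which records the identification $\Theta_f=a_f$ with $a_f\in\Lie(A^\circ)\subset\Gamma(V,TV)$ algebraic, and notes the domain of definition of the line field is $\{a_f\neq 0\}$. The only thing you add is spelling out why $a_f$ is not identically zero using the standing assumption $\dim(A^\circ)\geq 1$ from \S\ref{par:bounded_degree_algebraic_completion}, which the paper leaves implicit; that step is fine.
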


Let $x$ be a point of $V(\VR)$ and set $\gamma(t)=\Phi^t(x)$. Let  $g$ be an element of $\Aut(V_{\VR})$. We say that $g$ {\bf{preserves locally the analytic trajectory}} of $x$ determined by $f$ if $g$ maps $\gamma(\mK^\ell)$ into $\gamma(\VR)$ for some $\ell \geq 1$.
There we obtain 
\begin{equation}
g(\gamma(t))=\gamma(\varphi(t))
\end{equation}
for some germ of analytic function $\varphi$ around the origin in $\VR$. Taking derivatives, we get
\begin{equation}
Dg_{\gamma(t)}{\dot\gamma}(t)=\varphi'(t){\dot\gamma}(\varphi(t))
\end{equation}
where $Dg$ is the differential, ${\dot\gamma}$ is the velocity vector of the curve $\gamma$, and $\varphi'$ is the derivative of the function $\varphi$. Thus, along the curve $\gamma$, $g$ preserves the distribution of tangent lines to the orbits of $\Phi^t$. But this distribution of lines $x\mapsto \bfK\Theta_f(x)$ depends algebraically on $x$. This gives the following lemma.

\begin{lem}\label{lem:invariance_of_Lf}
 Suppose $g\in \Aut(V_{\VR})$ preserves the analytic trajectory of $x$, and that this trajectory is Zariski dense in $V$. Then, the distribution of lines $$L_f\colon y\in V\mapsto L_f(y):=\bfK a_f(y)$$ is $g$-invariant. 
\end{lem}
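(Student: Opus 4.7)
The lemma largely formalizes the observation sketched in the paragraph immediately preceding its statement; my plan is to turn that informal argument into three clean steps.

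First, I would record the pointwise identity along the trajectory. From $Dg_{\gamma(t)}\dot\gamma(t) = \varphi'(t)\dot\gamma(\varphi(t))$, together with $\dot\gamma(t) = a_f(\gamma(t))$, at any $t \in \mK^\ell$ with $a_f(\gamma(t)) \neq 0$ both sides are nonzero, so $\varphi'(t) \neq 0$, and passing to $\bfK$-linear spans yields
\[ Dg_{\gamma(t)}\bigl(L_f(\gamma(t))\bigr) \;=\; \bfK\,\dot\gamma(\varphi(t)) \;=\; L_f(g(\gamma(t))). \]
Equivalently, the rationally-defined pullback $g^{*}L_f \colon y \mapsto Dg_y^{-1}(L_f(g(y)))$ coincides with $L_f$ at each such point $\gamma(t)$.

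Next, I would invoke the algebraicity provided by Lemma \ref{lem:elliptic-generator-algebracity}: the distribution $L_f$ is a rational section of $\P(TV)$. Because $g \in \Aut(V_\VR)$ is algebraic, $g^{*}L_f$ is again a rational section of $\P(TV)$. To conclude, I would argue that $\{\gamma(t) : a_f(\gamma(t))\neq 0\}$ is Zariski dense in $V$: by hypothesis $\gamma(\VR)$ is Zariski dense, and since $a_f$ does not vanish identically on $V$ (being a generator of the infinite cyclic flow), removing the proper subvariety $\{a_f = 0\}$ from $\gamma(\VR)$ still leaves a Zariski dense set. Two rational sections of the projective bundle $\P(TV)$ that agree on a Zariski dense subset must coincide, whence $L_f = g^{*}L_f$ as rational sections, which is exactly the $g$-invariance asserted.

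I do not anticipate a real obstacle. The only mild subtlety is that $L_f$ is merely a \emph{rational}, not regular, section of $\P(TV)$, so the argument must genuinely use Zariski density rather than analytic continuation from the germ along $\gamma$; but the hypothesis on the trajectory of $x$ delivers exactly that density, and the whole proof reduces to a short bookkeeping exercise built on Lemma \ref{lem:elliptic-generator-algebracity}.
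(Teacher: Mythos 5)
Your proposal is correct and reproduces, in slightly more formal language, exactly the argument the paper gives in the paragraph preceding the lemma: the derivative identity $Dg_{\gamma(t)}\dot\gamma(t)=\varphi'(t)\dot\gamma(\varphi(t))$ along the trajectory, the algebraicity of $L_f$ from Lemma~\ref{lem:elliptic-generator-algebracity}, and Zariski density to propagate the equality of the two rational sections of $\P(TV)$. The only point worth tightening is that the identity is established for $t$ in the small disk $\mK^\ell$ where the germ $\varphi$ lives, so you should note (by the principle of isolated zeros applied to $\gamma$) that $\gamma(\mK^\ell)$ is already Zariski dense when $\gamma(\VR)$ is, before intersecting with the complement of $\{a_f=0\}$.
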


\subsection{Finite orbits}  
\begin{lem}\label{lem:finite_orbits_of_elliptic}
Let $f$ be an elliptic automorphism of an affine variety $V$, let $A^\circ$ be the connected algebraic group associated with $f$, and let $P_f$ be the algebraic set defined by $P_f=\set{x\in V\; ; \; a(x)=0\; \; \forall a\in \Lie(A^\circ)}$. Then
 $q\in V(\bfKbar)$ has a finite $f$-orbit if and only if $q\in P_f(\bfKbar)$;
there is an integer $\ell$ such that the period of every point $q\in P_f(\bfKbar)$ divides $\ell$. 
\end{lem}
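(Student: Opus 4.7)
The plan is to translate the condition defining $P_f$ into a statement about stabilizers in $A^\circ$, and then to combine the relation $f^k \in A^\circ$ (used for the forward direction) with the Zariski density of $(f^k)^\Z$ in $A^\circ$ from Section~\ref{par:bounded_degree_algebraic_completion} (used for the converse). Concretely, I would introduce the orbit map $\phi_q\colon A^\circ \to V$, $g\mapsto g\cdot q$, and its stabilizer $H_q=\phi_q^{-1}(q)$, a closed subgroup of $A^\circ$. Differentiating at the identity and identifying $\Lie(A^\circ)$ with its image in $\Gamma(V,TV)$ gives
\[
\Lie(H_q)=\{\, a\in \Lie(A^\circ) : a(q)=0 \in T_qV\,\},
\]
so the condition $q\in P_f(\bfKbar)$ is equivalent to $\Lie(H_q)=\Lie(A^\circ)$.

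For the implication $q\in P_f \Rightarrow$ finite orbit, I would assume $\Lie(H_q)=\Lie(A^\circ)$. In characteristic zero the algebraic group $H_q$ is smooth, so equality of Lie algebras forces $H_q^\circ=A^\circ$, hence $A^\circ\subset H_q$. Since $f^k\in A^\circ$, this yields $f^k(q)=q$, so $q$ has finite orbit with period dividing $k$. The same argument simultaneously delivers the uniform bound: $\ell=k$ works for every $q\in P_f(\bfKbar)$ at once.

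For the converse, suppose $q\in V(\bfKbar)$ has period $m$, so that $f^{km}\in H_q$; it suffices to show that $(f^{km})^\Z$ is still Zariski dense in $A^\circ$, which forces $A^\circ\subset H_q$ and hence $q\in P_f$. Here I would invoke the structure $A^\circ\simeq \Ga^r\times \Gm^s$ over $\bfKbar$ (with $r\leq 1$) recalled in Section~\ref{par:bounded_degree_algebraic_completion}. Writing $f^k=(u,t_1,\dots,t_s)$ in these coordinates, the Zariski density of $(f^k)^\Z$ amounts to $u\neq 0$ (when $r=1$) together with multiplicative independence of $t_1,\dots,t_s$ in $\bfKbar^\times$; both properties are preserved under the substitution $u\mapsto mu$, $t_i\mapsto t_i^m$ because $\mathrm{char}(\bfK)=0$, so $(f^{km})^\Z$ remains Zariski dense in $A^\circ$.

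The main obstacle, as I see it, is this last density step: it is where the characteristic zero hypothesis and the explicit description of $A^\circ$ genuinely enter the argument. The remainder is formal Lie-theoretic bookkeeping combined with the defining property of $A^\circ$ as the Zariski closure of $f^\Z$.
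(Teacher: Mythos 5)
Your argument is correct and mirrors the paper's: both reduce the claim to the statement that $q$ has a finite $f$-orbit if and only if $q$ is fixed by $A^\circ$, and identify the fixed locus of $A^\circ$ with the vanishing locus $P_f$ of $\Lie(A^\circ)$ (using characteristic~$0$ for smoothness of the stabilizer). Your concrete verification of the density step via the $\Ga^r\times\Gm^s$ structure and multiplicative independence spells out what the paper handles abstractly---a Zariski-closed, finite-index subgroup of the connected group $A^\circ$ must equal $A^\circ$---but it is the same underlying argument.
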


Note that this lemma works for all finite orbits in $V(\bfKbar)$, in contrast with Theorem~\ref{thm:parabolic_periodic_algebraic}.

\begin{proof} We may assume that the order of $f$ is infinite, since otherwise $P_f=V$ and we can set $\ell$ to be the order of $f$. 
A point $q\in V(\bfKbar)$ has a finite orbit if and only if its stabilizer has finite index in $f^\Z$. Since such a subgroup intersects $A^\circ$ on a Zariski dense subgroup, $q$ has a finite orbit
if and only if $A^\circ(q)=q$, if and only if $q\in P_f(\bfKbar)$. We can then take $\ell$ to be the index of $A^\circ$ in $A$.
\end{proof}

\begin{eg} \label{eg:large-fixed-locus}
	If $f$ is the automorphism of the plane defined by $f(\vx,\vy)=(\vx+\vy^2+\vy^3, -\vy)$, then $f$ is elliptic. Indeed, $f^{2n}(\vx,\vy)=(\vx+2n\vy^2,\vy)$ has degree $2$, and $f^{2n+1}=(\vx+(2n+1)\vy^2+\vy^3,-\vy)$ has degree $3$. This reveals that $P_f=(\vy^2=0)=(\vy=0)$. We can also explicitly compute the algebraic completion $A$ of $f$: the identity component $A^\circ$ acts by $\Phi^t(\vx,\vy)=(\vx+t\vy^2,\vy)$; its second component is obtained by the $f$-shift $f\circ\Phi^t(\vx,\vy)=(\vx+(t+1)\vy^2+\vy^3,-\vy)$. One can further generalize this to elementary transformations of the type $(\vx,\vy)\mapsto(\alpha\vx+q(\vy),\beta\vy)$, where $\alpha$ and $\beta$ are roots of unity.
\end{eg}

\subsection{Structure of $A^\circ$}\label{par:structure_algebraic_completion}  Let us apply Sections~\ref{par:bounded_degree_algebraic_completion} and~\ref{par:bell_poonen_elliptic} when $f$ is an elliptic automorphism of an affine surface $X$.  We obtain $A^\circ=\Ga^r\times\Gm^s$ with $r\leq 1$ and $r+s\leq 2$. More precisely,  over $\bfKbar$, exactly one of the following situations occurs:
\begin{enumerate}[(1)]
\item\label{enum:alg-compl-GmxGm} $r=0$ and $s=2$, and $A^\circ=\Gm\times \Gm$ has a unique open orbit, which is isomorphic to$~A^\circ$;
\item $r=1$ and $s=1$, and $A^\circ=\Ga\times \Gm$ has a unique open orbit, which is isomorphic to$~A^\circ$;
\item $r=0$ and $s=1$, or $r=1$ and $s=0$, then $A^\circ = \Gm$ or $\Ga$, respectively. The general orbits of $A^\circ$ have dimension $1$, and they define a rational fibration $\pi_0\colon X\dasharrow B$ onto a curve $B$ such that $\pi_0\circ h=\pi_0$ for every $h\in A^\circ$.
\end{enumerate}
If $f$ has a Zariski dense orbit, we are in one of the first two cases. 

\begin{eg} 
If $f$ is the automorphism of the plane defined by $f(\vx,\vy)=(\vx+1, \alpha \vy)$, where $\alpha\in \bfK^\times$ is not a root of unity, then the orbit of $(x,y)$ is Zariski dense in $\A^2$ if and only if $y\neq 0$. If $f(\vx,\vy)=(\alpha \vx,\beta \vy)$ and $\alpha$ and $\beta$ are elements of $\bfK^\times$ that are multiplicatively independent, then the orbit of $(x,y)$ is Zariski dense if, and only if, $xy\neq 0$. 
\end{eg}

\subsection{The Fixed Locus}\label{par:fixed_locus_elliptic}

If a reductive algebraic group $G$ acts on an affine variety $V$, then any two disjoint, invariant, Zariski closed subsets can be separated by a regular invariant function on $V$ (see \cite[Cor. 1.2]{Mumford:GIT-1ere-ed}). 

\begin{lem}
    \label{lem:dim-1_dim-0_relation}
    Let $\Gm\times V\to V$ be a nontrivial algebraic action on an affine variety $V$. For each point $x\in V$, 
    \begin{enumerate}[\rm (1)]
        \item  the Zariski closure of the $\Gm$-orbit $\Gm(x)$ contains at most one fixed point;
        \item if $x$ is fixed by $\Gm$, there is a $\Gm$-orbit which is 1-dimensional and whose Zariski closure contains $x$.
    \end{enumerate}
\end{lem}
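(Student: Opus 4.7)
My plan is to prove the two assertions by combining the separation property of reductive group actions displayed in the paragraph just above the lemma with a fiber-dimension argument applied to the categorical quotient $\pi\colon V \to V/\!/\Gm := \mathrm{Spec}\,\bfK[V]^{\Gm}$.

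For Assertion~(1), I argue by contradiction. If two distinct fixed points $p \neq q$ both lie in $\overline{\Gm(x)}$, then the singletons $\{p\}$ and $\{q\}$ are disjoint, closed, $\Gm$-invariant subsets of $V$, so by \cite[Cor.~1.2]{Mumford:GIT-1ere-ed} there exists $F \in \bfK[V]^{\Gm}$ with $F(p)\neq F(q)$. But any $\Gm$-invariant regular function is constant on the orbit $\Gm(x)$, hence on its Zariski closure, which forces $F(p)=F(q)$, a contradiction.

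For Assertion~(2), I will assume that $V$ is irreducible and that the $\Gm$-action is nontrivial on $V$ itself; this is the setting in the paper's applications, and in general one may restrict to the irreducible component through~$x$ on which $\Gm$ acts nontrivially. Because $\Gm$ has dimension one, every orbit is either a fixed point or one-dimensional, so the generic fiber of $\pi$ has dimension $\geq 1$ and hence $\dim V/\!/\Gm \leq \dim V - 1$. The fiber-dimension theorem then implies $\dim \pi^{-1}(\pi(x)) \geq 1$ for every $x \in V$. Writing $F_x := \pi^{-1}(\pi(x))$, the separation property used in~(1) shows that distinct $\Gm$-fixed points of $V$ project to distinct points of $V/\!/\Gm$, so $x$ is the unique $\Gm$-fixed point inside $F_x$. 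Since $\dim F_x \geq 1$, I may pick $y \in F_x \setminus \{x\}$, and $\Gm(y)$ is then a one-dimensional orbit contained in $F_x$. Finally, the classical GIT fact that each fiber of the categorical quotient contains a unique closed $\Gm$-orbit, lying in the closure of every other orbit in that fiber, gives $x \in \overline{\Gm(y)}$, as desired.

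The step I expect to demand the most care is the reduction in~(2) to a setting where the $\Gm$-action is nontrivial on the irreducible component through~$x$; without it the conclusion visibly fails at a fixed point sitting in a component on which $\Gm$ acts trivially. Past that point, both parts amount to combining the cited separation property with the standard closed-orbit lemma of GIT, so no further obstacle is anticipated.
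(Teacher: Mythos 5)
Your treatment of Assertion~(1) is identical to the paper's. For Assertion~(2), however, you take a genuinely different route. The paper runs an induction on $\dim V$: it restricts to an irreducible component $V'$ of the fiber $\pi^{-1}(\pi(x))$ containing $x$, checks that $0<\dim V'<\dim V$ and that $\Gm$ still acts nontrivially on $V'$, and descends until the dimension-one base case is reached, where the existence of a dense orbit concludes. You replace that induction by a single direct argument: once you know $\dim \pi^{-1}(\pi(x))\geq 1$ (same fiber-dimension reasoning as in the paper) and that $x$ is the only fixed point in its fiber (same separation input from \cite[Cor.~1.2]{Mumford:GIT-1ere-ed}), you invoke the classical GIT fact that each fiber of $\pi$ contains a unique closed orbit, which lies in the closure of every orbit in that fiber; any $y\in\pi^{-1}(\pi(x))\setminus\{x\}$ then gives a one-dimensional orbit whose closure must contain $\{x\}$. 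Both proofs rest on the same separation property; yours is shorter and avoids the descent, at the price of treating the unique-closed-orbit theorem as a black box, whereas the paper only needs the separation lemma repeatedly together with the elementary dimension-one observation. Your caveat about irreducibility is well taken: as literally stated the lemma does not exclude a fixed point sitting on an irreducible component where $\Gm$ acts trivially, in which case~(2) fails. The paper's proof also tacitly assumes irreducibility (its base case appeals to a dense orbit, and the cited application has $V$ an irreducible surface), so you have in fact flagged an implicit hypothesis rather than introduced a new restriction.
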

\begin{proof}
    (1) Fix $x\in V$. If the Zariski closure of $\Gm(x)$ has more than one $\Gm$-fixed point, then these points must be separated by a regular invariant function~\cite[Cor. 1.2]{Mumford:GIT-1ere-ed}. Such an invariant function must be constant on the Zariski closure, a contradiction. 

    (2) We proceed by induction on $\dim V$. If $\dim V=1$, then as $\Gm$ acts nontrivially on $V$, there is a dense orbit in $V$, and the closure of this orbit contains~$x$. 
    
    Now suppose $\dim V \geq 2$.  If every 1-dimensional $\Gm$-orbit closure contains the fixed point $x$, then we are done. Otherwise, suppose there exists $y\in V$, not fixed by $\Gm$, such that $x$ is not contained in the Zariski closure of $\Gm(y)$.

    Consider the GIT quotient $V/\!\!/\Gm$ (see, e.g., \cite[Def. 5.8]{Mukai:GIT}) and the natural projection $\pi\colon V\to V/\!\!/\Gm$. Observe that (a) the fiber $\pi^{-1}(\pi(x))$ contains a unique fixed point, namely $x$, and (b) the fiber $\pi^{-1}(\pi(x))$ has dimension $<\dim V$, as $\pi$ separates the fiber from $\Gm(y)$. On the other hand, the general orbit of $\Gm$ has dimension $1$, so all fibers of $\pi$ have positive dimensions. By choosing $V'\subset\pi^{-1}(\pi(x))$ to be an irreduible component containing $x$, we have $0<\dim V'<\dim V$. Since $\Gm$ is connected, $V'$ is invariant, and since $\pi$ separates fixed points, the action of $\Gm$ on $V'$ is nontrivial. Thus, we can conclude by induction.
\end{proof}

Using this, we can study the dimension of the fixed locus $P_f$, for $f$ an elliptic automorphism of an affine surface $X$. Let $A$ be the algebraic completion of $f^\Z$. By section \ref{par:structure_algebraic_completion}, we know that $A^\circ$ is either $\Gm$, $\Ga$, $\Ga\times\Gm$, or $\Gm\times\Gm$.
We have observed, in Example \ref{eg:large-fixed-locus}, that if $\dim A^\circ=1$ then $P_f$ may be a curve. 

\begin{pro}\label{pro:Pf_is_finite_if_dimA_is_2}
    If $\dim A^\circ=2$, then $P_f$ has at most one point.
\end{pro}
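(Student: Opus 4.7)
The plan is to argue by contradiction: suppose $P_f$ contains two distinct fixed points $x_1\neq x_2$. By \S\ref{par:structure_algebraic_completion}, $A^\circ$ is either $\Gm\times\Gm$ or $\Ga\times\Gm$, and the open orbit $O\cong A^\circ$ of $A^\circ$ on $X$ is free; hence $x_1,x_2$ lie in the at most $1$-dimensional complement $X\setminus O$. Fix a decomposition $A^\circ = H\cdot K$ into two transverse $1$-parameter subgroups with $H\simeq\Gm$ and $H\cap K=\{e\}$, which is possible in either structural case. Both $x_1,x_2$ are $H$-fixed, and $H$ acts nontrivially on $X$, so by Lemma~\ref{lem:dim-1_dim-0_relation}(2) there exist $1$-dimensional $H$-orbits $C_i\subset X$ with $x_i\in\overline{C_i}$ for $i=1,2$.

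The next (technical) step is to force $C_i\subset O$: since $x_i$ lies in the closure of the dense $A^\circ$-orbit $O$, a valuative-criterion argument applied to the rational quotient $X\dashrightarrow A^\circ/H$ (resolving indeterminacy at $x_i$ if needed and completing the base if the specialization goes to the boundary) produces an $H$-orbit inside $O$ whose closure accumulates on $x_i$. Identifying $O\cong A^\circ$, write $C_i = g_iH$ for some $g_i\in A^\circ$. Because $A^\circ$ is commutative, the translate $k\cdot C_i = g_i(kH)$ is an $H$-orbit for every $k\in K$, and since $k$ fixes $x_i$ one has $x_i = k\cdot x_i \in k\cdot\overline{C_i} = \overline{k\cdot C_i}$. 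The map $k\in K\mapsto (g_ik)H\in A^\circ/H$ is a bijection (since $HK = A^\circ$ and $H\cap K=\{e\}$), so $\{k\cdot C_i\}_{k\in K}$ is \emph{exactly} the partition of $O$ into $H$-orbits, independently of $i$. Thus $\{k\cdot C_1\}_{k\in K}$ and $\{k\cdot C_2\}_{k\in K}$ coincide as sets of orbits, and we obtain $k_0\in K$ with $C_1 = k_0\cdot C_2$. Then $\overline{C_1} = k_0\cdot\overline{C_2}$ contains both $x_1\in\overline{C_1}$ and $k_0\cdot x_2 = x_2$, i.e.\ two distinct $H$-fixed points inside $\overline{C_1}$, contradicting Lemma~\ref{lem:dim-1_dim-0_relation}(1).

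The main obstacle I anticipate is the valuative step guaranteeing $C_i\subset O$: one must cope with possible indeterminacy of the rational quotient $X\dashrightarrow A^\circ/H$ at $x_i$, and with the possibility that the $H$-orbit produced by Lemma~\ref{lem:dim-1_dim-0_relation}(2) initially lies in the $1$-dimensional boundary $X\setminus O$, in which case it would be the generic part of an irreducible component of that boundary and the combinatorial argument would need to be adapted. In the reductive subcase $A^\circ = \Gm\times\Gm$, this step can be bypassed entirely by Mumford's separation lemma~\cite[Cor.~1.2]{Mumford:GIT-1ere-ed}: freeness of the dense orbit forces $\bfK[X]^{A^\circ} = \bfK$, which makes two disjoint closed $A^\circ$-invariant subsets impossible.
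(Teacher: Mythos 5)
Your strategy is genuinely different from the paper's: the paper treats both structural cases via GIT quotients (projecting by a $\Gm$-subgroup and then analysing either the fibre over a point or the residual $\Ga$-action on the quotient curve), whereas you attempt a purely group-theoretic argument inside the free open orbit $O\cong A^\circ$, using a splitting $A^\circ=H\cdot K$ and the fact that $K$ acts transitively on $H$-orbits of $O$. Modulo the step you flag, the combinatorics of the argument are sound. Unfortunately, the step you flag is a genuine gap, and it cannot be closed in the generality you use it.

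The problem is that Lemma~\ref{lem:dim-1_dim-0_relation}(2) gives you a $1$-dimensional $H$-orbit $C_i$ whose closure contains $x_i$, but it gives you no control on whether $C_i\subset O$, and in fact $C_i$ may be forced to lie in the boundary $X\setminus O$. Already in the model example $X=\A^2$ with $A^\circ=\Gm\times\Gm$ acting by $(\lambda,\mu)\cdot(\vx,\vy)=(\lambda\vx,\mu\vy)$ and $H=\Gm\times\{1\}$, the unique fixed point is the origin, and for any point $(x,y)$ in the open orbit $\{\vx\vy\neq 0\}$ the closure of $H\cdot(x,y)$ is the horizontal line $\{(t,y):t\in\A^1\}$, which never contains the origin. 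No $H$-orbit in $O$ accumulates on the fixed point; the only $H$-orbit doing so sits inside the boundary $\{\vy=0\}$. The valuative-criterion trick you sketch does not repair this: the rational quotient $X\dashrightarrow A^\circ/H$ is the coordinate function $\vy$, which is already regular, and its value at the fixed point is the cusp $0\notin\Gm\cong A^\circ/H$ of the completed base; this records precisely that the specialization leaves $O$, rather than producing an $H$-orbit inside it. For $A^\circ\simeq\Gm\times\Gm$ one can avoid this by choosing $H$ generically, but you do not impose any genericity condition on $H$, and more seriously, for $A^\circ\simeq\Ga\times\Gm$ the subgroup $H\simeq\Gm$ is unique (since $\Ga$ contains no nontrivial multiplicative subgroup), so there is no freedom at all to choose $H$; the same degeneration can occur and you have no fix for it. Your proposed fall-back covers only the reductive case $\Gm\times\Gm$ and there reverts to the paper's own GIT argument, leaving the $\Ga\times\Gm$ branch without a proof.
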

\begin{proof}
    $\bullet$ First, suppose $A^\circ\simeq\Ga\times\Gm$. Denote simply by $\Gm$ the subgroup $\{ 0\}\times \Gm \subset A^\circ$, and consider the quotient $\pi\colon X\to B:=X/\!\!/\Gm$. 
 By \cite[Thm. 1.1]{Mumford:GIT-1ere-ed}, $B$ is affine. As $\Gm$ acts nontrivially on $X$, the general fibers of  $\pi$  have dimension $1$ and $\dim B\leq 1$.

    If $\dim B=0$, then $B$ is a point because $X$ is irreducible; since $P_f$ is pointwise fixed by $\Gm$, and $\pi$ separates fixed points, there is at most one point in $P_f$.
    
    Suppose $\dim B=1$. The action of $A^\circ$ on $X$ induces an algebraic action of $\Ga$ on $B$. If $A^\circ$ has a fixed point $q$, then $\pi(q)$ is fixed by $\Ga$, and thus $\Ga$ acts trivially on $B$ (\footnote{If $\Ga$ acted non trivially on an irreducible affine curve $B$, it would have no fixed point. Indeed, $\Ga$ acts on the normalization $\tilde{B}$ of $B$, and since the action is nontrivial, $\tilde{B}$ must be a smooth, affine, rational curve. In other words, $\tilde{B}$ is isomorphic to $\P^1\setminus F$, for some finite subset $F$, with $\vert F\vert \geq 1$. But then, since $\Ga$ acts nontrivially on it, $\tilde{B}\simeq \A^1$. The action of $\Ga$ has no fixed point, and $\tilde{B}\simeq B$.}).
 %
So, we can assume that $\Ga$ acts trivially on $B$. 
Then $\Ga\times\Gm$ acts faithfully on the generic fiber of $\pi$ (i.e., $\pi^{-1}(\eta_B)$ where $\eta_B$ is the generic point of $B$; as the fiber is dense in $X$, we have the faithfulness). This leads to a contradiction, as the group $\Ga\times\Gm$ cannot act faithfully on a curve.

$\bullet$ Next, suppose $A^\circ\simeq\Gm\times\Gm$. 
\iftrue
As remarked in item (\ref{enum:alg-compl-GmxGm}) of \S{\ref{par:structure_algebraic_completion}}, we have a unique open orbit $A^\circ(x)$ isomorphic to $A^\circ$ in $X$, which must be Zariski dense as well. %
\else
Observe the following facts:
\begin{enumerate}[\rm (1)]
\item ``upper semicontinuity of stabilizers'': for a fixed algebraic subgroup $G\subset A^\circ$, the set $X_G=\{x\in X : G(x)=\set{x}\}$ is a Zariski closed subset.
\item ``countably many subgroups'': there are countably many algebraic subgroups of $A^\circ$.
\end{enumerate}
For a nontrivial subgroup $G\subset A^\circ$, we have $X_G\neq X$ since otherwise this contradicts that $A^\circ$ acts faithfully on $X$. Together with the Baire category theorem, we conclude that the stabilizer of a very general point of $X$ is trivial. The orbit of such a point $x$ yields a dense, Zariski open subset of $X$.
\fi
Thus, any $A^\circ$-invariant regular function on $X$ must be constant, equal to the value taken at $x$. Therefore the GIT quotient $X/\!\!/A^\circ$ must be a point. Since $P_f$ coincides with the set of fixed points of $A^\circ$, and the quotient $X\to X/\!\!/A^\circ$ separates fixed points (by \cite[Cor. 1.2]{Mumford:GIT-1ere-ed} again), we conclude that $\vert P_f\vert \leq 1$.
\end{proof}

\begin{eg}
    The above proof shows that, if $A^\circ=\Ga\times\Gm$ then $A^\circ$ does not have a fixed point unless $\dim(X/\!\!/\Gm)=0$. 
To get an example, set $h^s=(\begin{smallmatrix} 1 & s \\ 0 & 1 \end{smallmatrix})$ for $s\in\Ga$, and let $\Ga\times\Gm$ act on $X=\A^2$ by $(\vs,\vt) \cdot(\vx,\vy)=\vt h^\vs(\vx,\vy)=(\vt(\vx+\vs\vy),\vt\vy)$. The point $(0,0)$ is fixed and is the only fixed point of $A^\circ$ in $X$.
\end{eg}

\subsection{Elliptic automorphisms with Zariski dense orbits} 




\begin{thm}\label{thm:GmGm_and_invariant_analytic_curve}
Let $X$ be a normal affine surface defined over the valuation ring $\VR$ of a $p$-adic local field $\bfK$.
Let $f$ be an elliptic automorphism of $X$, defined over $\VR$. Let $\U\subset X(\VR)$ be an $f$-invariant bidisk, on which 
$f=\Phi^1$ for some analytic flow. If there is a loxodromic automorphism $g$ in $\Aut(X)$ defined over $\VR$ and a point 
$x\in \U$ such that (i) the analytic trajectory $t\mapsto \Phi^t(x)$ is Zariski dense in $X$ and (ii) $g$ preserves locally this analytic trajectory, then $X$ is isomorphic to $\Gm\times\Gm$ over $\bfKbar$.
\end{thm}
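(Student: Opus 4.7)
The strategy is to produce a non-constant, nowhere-vanishing regular function on $X$ and then apply Theorem~\ref{thm:Iitaka-Abboud}. Hypothesis~(i) pins down the algebraic completion $A^\circ$ of $f^{\Z}$: since the one-parameter subgroup $\{\Phi^t\}_{t\in\VR}$ contains $f^{\Z}$, it is Zariski dense in $A^\circ$; hypothesis~(i) then forces the $A^\circ$-orbit of $x$ to be Zariski dense in $X$, so $\dim A^\circ=2$. By \S\ref{par:structure_algebraic_completion}, $A^\circ$ is isomorphic over $\bfKbar$ to $\Ga\times\Gm$ or $\Gm\times\Gm$, and $x$ lies in the unique open orbit $U\simeq A^\circ$. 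Proposition~\ref{pro:Pf_is_finite_if_dimA_is_2} further asserts that $P_f=\{a_f=0\}$ contains at most one point.

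By hypothesis~(ii) and Lemma~\ref{lem:invariance_of_Lf}, $g$ preserves the line field $L_f=\bfK a_f$, yielding $g_*a_f=\lambda_g\cdot a_f$ for a rational function $\lambda_g$ on $X$. Because $X$ is normal and $P_f$ has codimension $\geq 2$, $\lambda_g$ extends to a regular function on $X$. Equating zero loci on the two sides gives $Z(\lambda_g)\cup P_f=g(P_f)$, which is finite; by Krull's principal ideal theorem, the zero locus of a non-zero regular function on a surface has pure codimension one, so $Z(\lambda_g)=\emptyset$ (the alternative $\lambda_g\equiv 0$ would force $a_f\equiv 0$, absurd). Thus $\lambda_g$ is a regular, nowhere-vanishing function on $X$; if it is non-constant, Theorem~\ref{thm:Iitaka-Abboud} applied to $\xi=\lambda_g$ concludes.

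Assume now $\lambda_g=c$ is constant. Then $g\Phi^tg^{-1}=\Phi^{ct}$, so $g$ normalizes the Zariski-dense subgroup $\Phi^{\VR}\subset A^\circ$, and hence $A^\circ$ itself, inducing an automorphism $M\in\Aut(A^\circ)$ whose linearization sends $a_f$ to $c\cdot a_f$. If $c$ were a root of unity, some $g^k$ would centralize $A^\circ$; but the centralizer of $A^\circ$ acting on $U\simeq A^\circ$ by translation is $A^\circ$ itself, so $g^k|_U\in A^\circ$ would be elliptic, contradicting the loxodromicity of $g$. Hence $c$ is not a root of unity, which excludes $A^\circ=\Ga\times\Gm$ (its automorphism group $\Gm\times\{\pm 1\}$ only allows $c=\pm 1$) and forces $A^\circ=\Gm\times\Gm$ with $M\in\GL_2(\Z)$ hyperbolic. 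By uniqueness of the open orbit, $g(U)=U$ and $g$ permutes the irreducible components $E_1,\ldots,E_k$ of $E:=X\setminus U$ via some permutation $\sigma$. For each $i$, set $(a_i,b_i):=(v_{E_i}(\vx_1),v_{E_i}(\vx_2))\in\Z^2$, where $(\vx_1,\vx_2)$ are toric coordinates on $U\simeq\Gm^2$; the transformation rule $g^*\chi_{p,q}\propto\chi_{M^T(p,q)}$, combined with $v_{E_i}(g^*h)=v_{g(E_i)}(h)$, yields $M(a_i,b_i)=(a_{\sigma(i)},b_{\sigma(i)})$. Since $M$ has no nontrivial finite orbits in $\Z^2$, every $(a_i,b_i)$ vanishes, so $\vx_1$ extends to a non-constant, nowhere-vanishing regular function on $X_{\bfKbar}$; applying Theorem~\ref{thm:Iitaka-Abboud} after extending scalars to $\bfKbar$ concludes.

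The crux of the proof is the constant-$\lambda_g$ case: one must exclude $\Ga\times\Gm$, establish hyperbolicity of $M$, and use it to kill all boundary valuations. All three hinge on $c$ not being a root of unity, which in turn traces back to the loxodromicity of $g$.
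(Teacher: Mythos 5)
Your proof is correct, and the opening half (extracting $\lambda_g$, extending it by normality since $P_f$ is finite by Proposition~\ref{pro:Pf_is_finite_if_dimA_is_2}, ruling out zeros via Krull, and invoking Theorem~\ref{thm:Iitaka-Abboud} when $\lambda_g$ is non-constant) tracks the paper's Step~1 closely. The genuine divergence is in the constant case, where you take two different shortcuts. First, to exclude $A^{\circ}\simeq\Ga\times\Gm$, you observe that $g\Phi^{t}g^{-1}=\Phi^{ct}$ forces $c$ to be a non-root-of-unity eigenvalue of the induced automorphism of $A^{\circ}$, which is incompatible with $\Aut(\Ga\times\Gm)\simeq\Gm\times\{\pm 1\}$ acting on a Lie algebra element with both components nonzero; the paper instead notes that every such automorphism preserves the characteristic subgroup $\Ga$, so $g$ would preserve a rational fibration, contradicting Theorem~\ref{thm:Abboud_invariant_curve}. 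Second, to conclude $X\simeq\Gm^{2}$, you compute the valuation vectors $(a_i,b_i)$ of the toric coordinates along boundary divisors and use $M(a_i,b_i)=(a_{\sigma(i)},b_{\sigma(i)})$ plus hyperbolicity of $M$ to force all $(a_i,b_i)=0$, extending $\vx_1$ to a nowhere-vanishing regular function and re-applying Theorem~\ref{thm:Iitaka-Abboud}; the paper instead passes to a projective completion $Y$ and argues that, since the eigenvectors of the hyperbolic $M\in\GL_2(\Z)$ are irrational, the boundary $Y\setminus\Gm^{2}$ is entirely contracted by iterates of $g$, which an automorphism of $X$ cannot do unless $X\setminus\Gm^{2}$ is empty. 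Your route avoids going to a compactification and is more self-contained in the affine setting, at the cost of one extra application of Theorem~\ref{thm:Iitaka-Abboud} and a slightly more involved valuation bookkeeping; the paper's contraction argument is more geometric and directly identifies $X$ with the open orbit without returning to Theorem~\ref{thm:Iitaka-Abboud}. Both are sound. One marginal remark: the hyperbolicity of $M$ also follows directly from $\lambda_1(g)>1$ once $A^{\circ}\simeq\Gm^{2}$, so your ``$c$ not a root of unity'' detour is only really needed for the $\Ga\times\Gm$ exclusion.
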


The proof is related to the classification of foliations on complex surfaces invariant by an infinite group of birational transformations (see~\cite[Thm. 1.1]{Cantat-Favre}).

\begin{proof} Let $\gamma$ be the curve $t\mapsto \Phi^t(x)$. By assumption,  $g$ is loxodromic and the image of $\gamma$ is Zariski dense. 
Hence, the algebraic completion $A$ of $f^\Z$ has $\dim(A)=2$ and $A^\circ$ is isomorphic to $\Ga\times \Gm$ or $\Gm^2$ over $\bfKbar$. Let $k$ be the index of $A^\circ\subset A$.

\smallskip

{\bf{Step 1.--}} By Lemma~\ref{lem:invariance_of_Lf}, $L_f$ is $g$-invariant, hence 
there is a rational function $\xi$ on $X$ such that $g_*{a_f}=\xi \cdot {a_f}$. This function $\xi$ is regular on the complement of the zero set 
\begin{equation}
Z(a_f)=\set{x\in X\; ; \; {a_f}(x)=0};
\end{equation} 
but Proposition~\ref{pro:Pf_is_finite_if_dimA_is_2} shows that this set is finite because it is pointwise fixed by $f^k$, hence by $A^\circ$. Since $X$ is normal, $\xi$ extends as a regular function on $X$. Moreover, if $\set{\xi=0}$ were non-empty, it would be a curve, and the differential $dg_x$ would have rank $\leq 1$ along a curve. However, since $g$ is an automorphism, we see that $\xi$ does not vanish at all. By Theorem~\ref{thm:Iitaka-Abboud}, we may assume that $\xi$ is a constant. 
This implies that $g$ conjugates $f^k$ to another 
element of $A^\circ$. Since $(f^k)^\Z$ is Zariski dense in $A^\circ$, the action of $g$ by conjugacy on $\Aut(X)$ preserves $A^\circ$. In other words, there is an algebraic automorphism $\psi_g$ of the algebraic group $A^\circ$ such that $ghg^{-1}=\psi_g(h)$ for every $h\in A^\circ$.

\smallskip

{\bf{Step 2.--}} If $A^\circ$ were isomorphic to $\Ga\times \Gm$, the action of $g$ on it by conjugacy would preserve the factor $\Ga$, and then $g$ would preserve the rational fibration of $X$ given by the orbits of \iftrue$\Gm$\else$A^\circ$\fi. This contradicts to the fact that $g$ is loxodromic (see Theorem \ref{thm:Abboud_invariant_curve}). Thus, $A^\circ$ is isomorphic to $\Gm^2$ over $\bfKbar$, and $\psi_g$ is a monomial automorphism of $\Gm\times\Gm$. From this, $X_{\bfKbar}$ contains a copy of $\Gm\times \Gm$, represented by the unique open orbit of $A^\circ$, which is $g$-invariant because $gA^\circ g^{-1}=A^\circ$, and $g$ acts on it 
via a loxodromic monomial transformation. This implies that $X$ is isomoprhic to $\Gm\times \Gm$. Indeed, 
let $Y$ be a projective completion of $X$. It is a projective completion of $\Gm\times \Gm$, and
$g$ induces a birational transformation of $Y$ acting as a regular automorphism on $\Gm\times \Gm$. 
Let $(\vx_1,\vx_2)$ be the affine coordinates on $\Gm\times \Gm$. 
There are elements $\alpha$, $\beta$ in $\bfKbar^\times$, and a matrix 
\begin{equation}
M_g=\left(\begin{array}{cc} a & b \\ c & d \end{array}\right)
\end{equation}
of $ \GL_2(\Z)$ such that $g(\vx_1,\vx_2)=(\alpha \vx_1^a \vx_2^b, \beta\vx_1^c \vx_2^d)$. Since $g$
is loxodromic, its dynamical degree is $>1$, and $M_g$ is conjugate in $\GL_2(\R)$ to a diagonal matrix 
with eigenvalues $\pm \lambda_1(g)$ and $\pm \lambda_1(g)^{-1}$, with eigenvectors in $\R^2\setminus \Q^2$. This implies that $Y\setminus (\Gm\times \Gm)$ is entirely contracted by some positive (resp. negative) iterate of $g$ on an indeterminacy point of $g^{-1}$ (resp. of $g$). Since $g$ induces an automorphism of $X$, this implies 
that $X=\Gm\times \Gm$.
\end{proof}

\section{The multiplicative group $\Gm^2$}\label{par:case_of_GmxGm}


\subsection{Multiplicative groups}\label{par:multiplicative_groups_arithmetic} Let $d$ be a positive integer, and let $V$ be the mutiplicative group $\Gm^d$, viewed as an affine variety. 
The automorphism group $\Aut(V)$ is $\GL_d(\Z)\ltimes V$, where $V$ acts on itself by tranlation
and $\GL_d(\Z)$ by monomial transformations: the action of a matrix $(a_{i,j})\in \GL_d(\Z)$ is 
given by
\begin{equation}
(v_1,\ldots, v_d)\mapsto (v_1^{a_{1,1}}\cdots v_d^{a_{d,d}},\, \ldots,\, v_1^{a_{d,1}}\cdots v_d^{a_{d,d}}).
\end{equation}

Let $p$ be a prime and $\bfK$ be a $p$-adic local field, with uniformizer $\pi$, valuation ring $\VR$, and group of principal units $U_\bfK=1+\mK$. Let $\bfk$ be the residue field of $\bfK$; it is isomorphic to $\bfF_q$ where $q=p^f$.  According to~\cite[Chap. II.5]{Neukirch}, the multiplicative group $\bfK^\times$ is isomorphic, as an abelian topological group, to the product 
\begin{equation}
\pi^\Z\times \mu_{q-1}\times \Z/p^a\Z\times \Z_p^m
\end{equation}
for some $m\geq 1$ and some $a\geq 0$, with {\emph{both $m$ and $a$ bounded in terms of $[\bfK:\Q_p]$}}. The torsion group $\Z/p^a\Z$ comes from the roots of unity contained in $U_\bfK$. The reduction homomorphism $\VR\to \bfk$ provides a bijection from $\mu_{q-1}=\set{\zeta\in \bfK\; ;\; \zeta^{q-1}=1}$ to $\bfk^\times$. Thus,
\begin{equation}
V(\VR)=F\times (\Z_p^{m})^d,
\end{equation}
where $F$ is the finite group $(\mu_{q-1}\times \Z/p^a\Z)^d$.

The action of $\GL_d(\Z)$ on $V$ preserves $V(\VR)$. This gives an action of the subgroup $\GL_d(\Z)\ltimes  V(\VR)$ of $\Aut(V)$ on $V(\VR)$. A finite index subgroup $G$ of $\GL_d(\Z)\ltimes V(\VR)$ acts trivially 
on the finite part $F=V(\VR)/(\Z_p^m)^d$ and 
by affine transformations on $(\Z_p^m)^d$; more precisely, each element of $G$ acts on $(\Z_p^m)^d$ by
\begin{equation}\label{eq:action_on_Zp_m_d}
(w_1,\ldots, w_d)\mapsto (s_1+\sum_{j=1}^da_{1,j}w_j,\ldots , s_d+\sum_{j=1}^da_{d,j}w_j)
\end{equation}
for some $(a_{i,j})\in \GL_d(\Z)$ and  $(s_j)_{j=1}^d\in (\Z_p^m)^d$. 
 
\begin{eg}\label{eg:finite_orbits_monomial}
Consider the monomial action of $\GL_d(\Z)$ on $\Gm(\overline{\Q_p})^d$. A point has a finite orbit if and only if it is a torsion point, if and only if its coordinates $(\xi_1, \ldots, \xi_d)$ are roots of unity. Thus, finite orbits in  $\Gm(\overline{\Z_p})^d$ are Zariski dense, but only finitely many are contained in $\Gm({\Z_p})^d$. 
\end{eg}

\subsection{Dimension $2$} We keep the same notation, but assume $d=2$. Let $\Gamma$ be a subgroup of $\Aut(V_\VR)$. The intersection $\Gamma_0=\Gamma\cap G$ has a finite index in~$\Gamma$. Let $H_\Gamma$ be the image of $\Gamma$ in $\GL_2(\Z)$, and let $\overline{H}_\Gamma$ be the closure of $H_\Gamma$ in $\GL_2(\Z_p)$; by construction, $\overline{H}_\Gamma$ is contained in $\SL^{\pm}_2(\Z_p)$, the group of matrices with determinant $\pm 1$.

\begin{lem}
If $\dim(V)=2$ and $\Gamma$ is a non-elementary subgroup of $\Aut(V_\VR)$, then $\overline{H}_\Gamma$ is an open subgroup of $\SL_2^\pm(\Z_p)$.
\end{lem}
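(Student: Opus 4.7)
The plan is to reduce the assertion to a Zariski density computation, followed by a standard Lie algebra argument for $p$-adic Lie groups.

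First, I would translate the non-elementarity of $\Gamma\subset\Aut(V_\VR)$ into a statement about $H_\Gamma\subset\GL_2(\Z)$. Every $f\in\Aut(V)$ has the form $(M_f,t_f)\in\GL_2(\Z)\ltimes V$, and its dynamical degree equals the spectral radius of $M_f$; in particular, $f$ is loxodromic if and only if $M_f$ is hyperbolic, i.e.\ $\vert\tr(M_f)\vert>2$. I would choose two loxodromic elements $f_1,f_2\in\Gamma$ generating a non-abelian free subgroup; their matrices $M_1,M_2\in\SL^\pm_2(\Z)$ are hyperbolic and must have disjoint axes in $\Hyp^2$. Indeed, if the two axes coincided, $M_1$ and $M_2$ would commute; and if they shared just one endpoint on $\partial\Hyp^2$, both would lie in a common Borel subgroup, whose derived subgroup is unipotent, contradicting the loxodromicity of the nontrivial commutator $[f_1,f_2]$.

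Next, I would show that $\langle M_1,M_2\rangle$, and hence $H_\Gamma$, is Zariski dense in $\SL^\pm_2$ over $\Q_p$. The Zariski closure is an algebraic subgroup of $\SL^\pm_2$, and a quick inspection of proper algebraic subgroups rules them all out: finite and unipotent subgroups contain no hyperbolic element, tori would force $\langle M_1,M_2\rangle$ to be abelian, Borels would force a shared fixed point on $\partial\Hyp^2$, and normalizers of tori are virtually abelian. Hence the Zariski closure is all of $\SL^\pm_2$.

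Finally, I would pass from the Zariski to the $p$-adic topology via Lie algebras. The closure $\overline{H}_\Gamma$ is a closed subgroup of the compact $p$-adic Lie group $\SL^\pm_2(\Z_p)$, so it is a $p$-adic analytic subgroup with a well-defined Lie algebra $\mathfrak{h}\subset\sll_2(\Q_p)$. This $\mathfrak{h}$ is invariant under $\mathrm{Ad}(\overline{H}_\Gamma)$, hence under $\mathrm{Ad}(\SL^\pm_2)$ by Zariski density. Since $\sll_2$ is a simple Lie algebra, only $\mathfrak{h}=0$ or $\mathfrak{h}=\sll_2(\Q_p)$ is possible; the former would make $\overline{H}_\Gamma$ discrete and, being closed in a compact group, finite, contradicting Zariski density. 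Therefore $\mathfrak{h}=\sll_2(\Q_p)$, which is equivalent to $\overline{H}_\Gamma$ being open in $\SL^\pm_2(\Z_p)$.

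The delicate step is the last one, which invokes the correspondence between closed subgroups of a $p$-adic Lie group and Lie subalgebras of its ambient Lie algebra, together with $\mathrm{Ad}$-invariance arguments; this is classical (see, e.g., Bourbaki's \emph{Groupes et alg\`ebres de Lie}), but it is the essential tool that converts an algebro-geometric density statement into a topological openness statement.
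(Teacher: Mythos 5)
Your proof is correct in outline, but it reaches the conclusion by a genuinely different mechanism from the paper's. The paper's argument (sketched in a footnote, and attributed to Lubotzky--Segal) works purely group-theoretically: it notes that a non-abelian free group inside $\Gamma$ maps injectively to $H_\Gamma$, passes to a finite-index subgroup whose generators are $\equiv\Id\pmod{p^2}$ so that they lie on one-parameter subgroups $t\mapsto\exp(t\log A)$, and then uses freeness to conclude that the resulting logarithms generate all of $\sll_2$. You instead establish Zariski density of $H_\Gamma$ in $\SL_2$ by dynamical considerations (hyperbolic matrices with distinct axes cannot all sit inside a proper algebraic subgroup), then leverage the $\mathrm{Ad}$-invariance of $\mathfrak h=\Lie(\overline{H}_\Gamma)$ and simplicity of $\sll_2$ to force $\mathfrak h=\sll_2$. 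Your route is more conceptual (Zariski density plus standard $p$-adic Lie theory, with no appeal to the Bell--Poonen-style exponential machinery), and both the density step and the $\mathrm{Ad}$-invariance step generalize verbatim to higher rank; the paper's approach is shorter here because it piggybacks on the one-parameter-subgroup formalism already set up in Section 2. One small inaccuracy worth noting: the equivalence \emph{loxodromic $\Leftrightarrow$ $\vert\tr M_f\vert>2$} is correct only for $\det M_f=+1$; for $\det M_f=-1$ the right condition is $\tr M_f\neq 0$. This does not affect your argument, since all you need is to produce two loxodromic generators of a free subgroup, which non-elementarity supplies.
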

\begin{proof}
The group $\Gamma$ contains a non-abelian free group, hence so do $\Gamma_0$ and $H_\Gamma$. 
This implies that $\overline{H}_\Gamma$ is open in $\SL_2^\pm(\Z_p)$ (see~\cite[Window 9, Thm. 2]{Lubotzky_Segal:book}).(\footnote{Let us sketch a proof of this last fact. Pick two elements $A$ and $B$ in $\SL_2(\Z_p)$ generating a non-abelian free group. Let $H$ be the closure of $\langle A, B\rangle$ in $\SL_2(\Z_p)$. Taking iterates, we can assume that $A=\Id$ and $B=\Id$ modulo $p^2$. Then, $A^\Z$ and $B^\Z$ are contained in $1$-parameter subgroups parametrized by $\Z_p$. Taking derivatives and using that $\langle A, B\rangle$ is free, one sees that the Lie algebra of $H$ is equal to ${\mathfrak{sl}}_2$; hence $H$ contains an open subgroup of $\SL_2(\Z_p)$. }) 
\end{proof}

So, we assume that $\overline{H}_\Gamma$ is an open subgroup of $\SL_2^\pm(\Z_p)$. There are two cases, in each of them we prove a refined version of Theorem A:

\subsubsection{} Assume that   $\Gamma_0$ is contained in $\SL_2^\pm(\Z_p)$. 
Its action on $(\Z_p^m)^2$, as given in Equation~\eqref{eq:action_on_Zp_m_d}, coincides with its diagonal, linear action on $(\Z_p^2)^m$. 

The action of $\Gamma_0$ on $\Z_p^2$ fixes the origin $o=(0,0)$, and if $u\in \Z_p^2\setminus \set{o}$ its orbit closure is  open. So, if $m=1$, $\Gamma_0$ has one fixed point and infinitely many open orbit closures. 
If $m\geq 2$, the function $\det(v_1,v_2)$ is invariant, hence no orbit closure contains an open set. (If $m\geq 3$, any $m$-tuple of vectors $(v_i)$ satisfies linear relations of length $3$, such as $b_1v_1+b_2v_2+b_3v_3=0$ for some non-zero vector $(b_1,b_2,b_3)\in \Z_p^3$. This shows also that no orbit closure is open). Thus, one sees that
\begin{enumerate}[\rm (1)]
\item the general orbit closure of $\Gamma$ on $V(\VR)$ is open if and only if $m=1$, that is, if and only if $\bfK=\Q_p$; 
\item $\Gamma$ has a finite number of finite orbits (corresponding to the elements of $F\times\set{o}$); 
\item there are infinitely many distinct orbit closures.
\end{enumerate}

\subsubsection{} Suppose that $\Gamma_0$ is not conjugate to a subgroup of $\SL_2^\pm(\Z_p)$. Since $\overline{H}_\Gamma$ contains an open set, the closure of $\Gamma_0$ in the affine group $\SL_2^\pm(\Z_p)\ltimes \Z_p^2$ contains an open neighborhood of the origin in $\Z_p^2$.  This implies that every orbit closure is open when $m = 1$ or $2$. On the other hand, when $m\geq 3$, the interior of every orbit closure is empty because the function $\det(v_2-v_1, v_3-v_1)$ is invariant.

\subsection{An example: the Cayley cubic}\label{par:cayley_cubic} Consider the surface $V=\Gm\times \Gm$ with the monomial action of $\GL_2(\Z)$: each element $M$ of $\GL_2(\Z)$ determines an automorphism $f_M$ of $V$. The involution $\eta:=f_{-\Id}$ is a central element, and the quotient $V/\langle \eta\rangle$ is the Cayley cubic. It is an affine cubic with four isolated singularities (the maximal number for a cubic). More precisely, the map $(v_1,v_2)\in V\mapsto -(v_1+1/v_1, v_2+1/v_2, v_1v_2+1/(v_1v_2))\in \A^3$ identifies $V/\langle \eta\rangle$ to the affine surface of equation $\vx^2+\vy^2+\vz^2+\vx\vy\vz=4$ (it is a member of the Markov family described below).
Since $-\Id$ is central in $\GL_2(\Z)$, $\PGL_2(\Z)$ acts by automorphisms on $V/\langle \eta\rangle$. On $V$ and $V/\langle \eta\rangle$, $f_M$ is loxodromic if and only $\Tr(M)^2>4$, and $f_M$ is a Jonquières twist if and only if $M$ has infinite order and $\Tr(M)^2=4$. If $f_M$ is elliptic, then $f_M$ has finite order. In particular, there are Jonquières twists on singular affine surfaces, a phenomenon that appears on every singular Markov surface (see \S\ref{par:examples_markov}). 
 
\section{Decomposition into orbit closures}\label{par:proof_of_thmA_part1}

In this section we prove Theorem~A, except for the statement concerning stationary measures. So, let $X$ be an affine surface defined over $\Z_p$, and let $\Gamma$ 
be a subgroup of $\Aut(X_{\Z_p})$ such that 
\begin{enumerate}[(a)]
\item $\Gamma$ contains a loxodromic element $g$;
\item $\Gamma$ contains a non loxodromic element $f$ of infinite order.
\end{enumerate}

From the results of Section~\ref{par:case_of_GmxGm}, we can assume that $X$ is not isomorphic to $\Gm\times \Gm$ over $\bfKbar$. 
Thus, by Corollary~\ref{cor:application2_of_thm_C}, if $f$ is parabolic, it is a Jonquières twist acting by a finite order automorphism on the base of its invariant fibration $\pi\colon X\to B$. 
Changing $f$ into a positive iterate, we assume that it preserves each fiber of $\pi$, i.e.\ $f_B=\Id_B$.
 
 We denote by $\Gamma_0$ a normal, finite index subgroup of $\Gamma$ that satisfies the conclusion of Theorem~\ref{thm:good_finite_cover}. Then, $\Gamma_0$ is torsion free (see Remark~\ref{rem:no_torsion}).
 As in Section~\ref{par:lie_dimension}, we denote by $s_\Gamma(x)$ 
 the dimension of $L_\Gamma(x)$ for $x\in X(\Z_p)$. 
 
 \medskip
  
\noindent{\bf{Step 1}}.-- {\emph{There are only finitely many finite orbits in $X(\Z_p)$.}} 

\smallskip

This finiteness result holds over any $p$-adic local field:

\medskip

\begin{thm-D}
Let $\bfK$ be a $p$-adic local field, with valuation ring $\VR$. Let $X$ be an affine surface defined over $\VR$, and let $\Gamma$ be a non-elementary subgroup of $\Aut(X_\VR)$ containing a non-loxodromic element of infinite order. Then, $\Gamma$ has at most finitely many finite orbits in 
$X(\VR)$. \end{thm-D}

\begin{proof} Let $\Per_\Gamma(\VR) \subset X(\VR)$ be the set of points with a finite $\Gamma$-orbit. 
Such a point is periodic under the action of $f$ and all its conjugates $hfh^{-1}$, for $h\in \Gamma$. 
Thus, 
\begin{equation}
\Per_\Gamma(\VR)\subset \bigcap_{h\in \Gamma} P_{hfh^{-1}}(\VR)
\end{equation}
where $P_{hfh^{-1}}=h(P_f)$ and $P_f$
is defined in Lemma~\ref{lem:finite_orbits_of_elliptic} when $f$ is elliptic or in Theorem~\ref{thm:parabolic_periodic_algebraic} when $f$ is parabolic. Thus, $ \cap_{h\in \Gamma} P_{hfh^{-1}}$ is an intersection of Zariski closed subsets of dimension $\leq 1$, and, as such, it 
is Zariski closed of dimension $\leq 1$. Since it is $\Gamma$-invariant, it must be finite, because the loxodromic automorphism element $g$ does not preserve any curve (Theorem~\ref{thm:Abboud_invariant_curve}). Thus, 
$\Per_\Gamma(\VR)$ is finite and coincides with the set of $\VR$-points in  $ \cap_{h\in \Gamma} P_{hfh^{-1}}$. \end{proof}


\noindent{\bf{Step 2}}.-- {\emph{For $x\in X(\Z_p)$, we have $s_\Gamma(x)=2$ if the orbit of $x$ is infinite and $s_\Gamma(x)=0$ otherwise.}}

\smallskip

Indeed, consider a point $x$ of $X(\Z_p)$ and a neighborhood $\U(x)\simeq \Z_p^2$ of $x$, as well as the subgroup $\Gamma_0\subset\Gamma$, as in Theorem~\ref{thm:good_finite_cover}. If the orbit of $x$ is finite, it is fixed by the flow $\Phi_h\colon \Z_p\times\U(x)\to \U(x)$
associated with any $h\in \Gamma_0$. Hence, $s_{\Gamma_0}(x)=s_\Gamma(x)=0$. Conversely, if $s_\Gamma(x)=0$, then the vector field $\Theta_h$ vanishes at $x$ for every $h\in \Gamma_0$, which implies that $h(x)=\Phi_h^1(x)=x$. Therefore, $\Gamma_0$ fixes
$x$ and the $\Gamma$-orbit has cardinality $\vert\Gamma(x)\vert\leq [\Gamma : \Gamma_0]$.

Now, consider the set $S=\set{y\in X(\Z_p)\; ;\; s_\Gamma(y)\leq 1}$. Let us prove that $S$ is the set of $\Z_p$-points of a Zariski closed, $\Gamma$-invariant set. For this, we consider the distribution $L_{f}$: by definition, if $f$ is elliptic, then $L_f$ is the distribution of lines determined by 
an algebraic vector field $a_f$ (see Section~\ref{par:bell_poonen_elliptic}), and if $f$ is parabolic, $L_f$ is the distribution of lines tangent to the $f$-invariant fibration $\pi\colon X\to B$ (see Remark~\ref{rem:L_for_parabolic}). Thus, given any $h\in \Gamma$, 
$L_{hfh^{-1}}=h_*L_f$ is a globally defined algebraic distribution of lines in the tangent space $TX$. This implies that the tangency 
locus 
\begin{equation}
T_\Gamma=\set{ y\in X\; ;\; \dim {\mathrm{Vect}} (L_{hfh^{-1}}(x); h\in \Gamma)\leq 1 }
\end{equation}
is a $\Gamma$-invariant algebraic subset (\footnote{Indeed, the tangency locus between two algebraic distributions of lines is an algebraic subset of $X$, and an intersection of algebraic subsets is algebraic. 
}). 
Since $\Gamma$ contains loxodromic elements, $T_\Gamma$ coincides with $X$ or is a finite set. 

\begin{lem}
The algebraic set $T_\Gamma$ is a finite subset of $X$.
\end{lem}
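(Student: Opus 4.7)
My plan is to proceed by contradiction: if $T_\Gamma\neq X$, then, being a proper $\Gamma$-invariant algebraic subset of $X$, it must be finite, since the loxodromic $g\in\Gamma$ preserves no curve (Theorem~\ref{thm:Abboud_invariant_curve}). So I assume $T_\Gamma=X$ and derive a contradiction. Under this hypothesis the algebraic line distribution $L_f$ is globally $g$-invariant, and I will exploit this separately in the two cases for the non-loxodromic element $f$ of infinite order.

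When $f$ is parabolic, Remark~\ref{rem:L_for_parabolic} identifies $L_f$ with the tangent distribution to the fibres of the $f$-invariant Jonqui\`eres fibration $\pi_0\colon X\to B$. The $g$-invariance of $L_f$ then forces $g$ to permute those fibres, i.e.\ to preserve the pencil defined by $\pi_0$, contradicting the loxodromicity of $g$ (case (c) of \S\ref{sec:elliptic-parabolic-loxodromic}).

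When $f$ is elliptic, let $A^\circ$ denote the identity component of the algebraic completion of $f^\Z$, so $1\leq\dim A^\circ\leq 2$ (\S\ref{par:structure_algebraic_completion}). If $\dim A^\circ=1$, the orbits of $A^\circ$ form a rational fibration $\pi_0\colon X\dasharrow B$ by \S\ref{par:structure_algebraic_completion}(3), $L_f$ is tangent to it, and the parabolic argument repeats. If $\dim A^\circ=2$, I will reuse the argument from the proof of Theorem~\ref{thm:GmGm_and_invariant_analytic_curve}: from $g_\ast L_f=L_f$ I extract a rational function $\xi$ on $X$ with $g_\ast a_f=\xi\cdot a_f$; Proposition~\ref{pro:Pf_is_finite_if_dimA_is_2} makes the zero locus $\{a_f=0\}$ finite, so by normality of $X$ the function $\xi$ extends to a regular function; since $g$ is an automorphism, $\xi$ cannot vanish on a divisor, and hence $\xi$ is a regular non-vanishing function on $X$. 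Then Theorem~\ref{thm:Iitaka-Abboud}, together with our standing assumption $X\not\simeq\Gm\times\Gm$, forces $\xi$ to be constant, and the remainder of the proof of Theorem~\ref{thm:GmGm_and_invariant_analytic_curve} applies verbatim to yield $X\simeq\Gm\times\Gm$ over $\bfKbar$, the desired contradiction.

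The delicate point is this last case, $f$ elliptic with $\dim A^\circ=2$. Theorem~\ref{thm:GmGm_and_invariant_analytic_curve} is stated with the hypothesis of a Zariski-dense analytic $f$-trajectory locally preserved by $g$, and I want to avoid having to exhibit such a trajectory in $X(\Z_p)$. The crucial observation that makes the plan succeed is that in the proof of Theorem~\ref{thm:GmGm_and_invariant_analytic_curve} this hypothesis intervenes only through Lemma~\ref{lem:invariance_of_Lf}, and only in order to establish the $g$-invariance of $L_f$; since $T_\Gamma=X$ already grants me that invariance as an algebraic statement, the rest of the argument in Theorem~\ref{thm:GmGm_and_invariant_analytic_curve} goes through without modification.
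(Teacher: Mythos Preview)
Your argument is correct and coincides with the paper's own proof: the paper also assumes $T_\Gamma=X$, observes that an invariant line field tangent to a fibration would force the loxodromic $g$ to preserve that fibration (thereby excluding the parabolic case and the elliptic case with $\dim A^\circ=1$), and then, for $\dim A^\circ=2$, runs exactly the $g_\ast a_f=\xi\cdot a_f$ argument you describe (this is the paper's ``second argument''; it also offers an alternative via the Cantat--Favre classification of foliations with infinite birational symmetry). Your explicit three-case split is merely a cosmetic reorganization of the same reasoning.
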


\begin{proof} We must show that $T_\Gamma$ does not coincide with $X$. Otherwise, the distribution of lines $L_f$ is invariant under the action of $\Gamma$. This invariant line field is not tangent to a fibration, because $\Gamma$ contains loxodromic elements, and such automorphisms do not preserve any fibration. 
Thus, $f$ is elliptic, and 
\begin{enumerate}[-- ]
\item $L_f$ is determined by an algebraic vector field $a_f$ on $X$, as in Section~\ref{par:bell_poonen_elliptic}; 
\item the algebraic group $A$ in which $f^\Z$ is Zariski dense has dimension $2$, since otherwise its orbits would determine an invariant fibration tangent to $L_f$, but $g$ does not preserve any fibration;
\item  $A^\circ$ is isomorphic to $\Ga\times \Gm$ or to $\Gm\times \Gm$, has an open orbit, and  almost all orbits of $f$ are Zariski dense in $X$.
\end{enumerate}
We shall give two arguments to reach a contradiction. The first one is based on the classification of birational symmetries of foliations; the second one is a simple variation on Theorem~\ref{thm:GmGm_and_invariant_analytic_curve}.

\smallskip
 
{\emph{First argument}}.-- Let $\overline{X}$ 
be a completion of $X$, smooth at infinity.  Since $\Gamma$ contains loxodromic elements and the pair
$(X,\Gamma)$ is defined over a field of characteristic $0$, we can apply~\cite[Cor.\ 1.3]{Cantat-Favre} to 
the triple $(\overline{X}, \Gamma, L_f)$. This gives four possibilities. In the first case $\overline{X}$ is 
birationally equivalent to an abelian surface $A$. Since the group $\Bir(A)$ coincides with $\Aut(A)$, Proposition~\ref{pro:not_conjugate_automorphism_projective} provides a contradiction. The same argument applies when $\overline{X}$ is birationally equivalent to the quotient of such an abelian surface by a finite group action. The third case leads to $X=\Gm\times \Gm$, which is excluded by hypothesis. And the fourth one leads to $X$ being the quotient of $ \Gm\times \Gm$ by $(u,v)\mapsto (1/u,1/v)$: this is excluded too, because this surfaces does not have infinite order elliptic elements. 

\smallskip
 
{\emph{Second argument}}.-- If $g$ is an element of $\Gamma$, then $g_*a_f$ is a new vector field and is proportional to $a_f$. This means that there is a rational function $\xi_g$ on $X$ such that $g_*a_f=\xi_g\cdot a_f$. This rational function may have poles on $\{a_f=0\}$, but according to Proposition \ref{pro:Pf_is_finite_if_dimA_is_2}, this set is finite; thus, $\xi_g$ is actually a regular function on $X$. Moreover, $\xi_g$ cannot vanish because $g$ is an automorphism. Thus, by Theorem~\ref{thm:Iitaka-Abboud}, $\xi_g$ is a constant. This implies that $\Gamma$ normalizes the algebraic group $A^\circ$ associated with the elliptic element $f$. The automorphisms of $\Ga\times \Gm$ preserve the fibration onto $\Gm$, hence if $A^\circ \simeq \Ga\times \Gm$, there would be a $\Gamma$-invariant fibration, a contradiction. Thus $A^\circ \simeq \Gm\times \Gm$, and as in the proof of Theorem~\ref{thm:GmGm_and_invariant_analytic_curve}, we see that 
$X$ is isomorphic to $\Gm\times \Gm$ over $\bfKbar$, a contradiction.
\end{proof}

 By noetherianity, there is a finite number of conjugates $f_i=h_ifh_i^{-1}$, $1\leq i\leq q$, such that $T_\Gamma$ 
coincides with $\set{ y\in X\; ;\; \dim {\mathrm{Vect}} (L_{f_i}(y); 1\leq i\leq q)\leq 1 }$.
Let $x$ be a point with an infinite orbit. 
As $T_\Gamma$ is finite, one can find a point $x'$ in the orbit of $x$ that is not on $T_\Gamma$. For such a point, $\Theta_{f_i}(x')$ generate $T_{x'}X$, and thus $s_\Gamma(x')=2$.
But the set $\set{s_\Gamma=2}$ is $\Gamma$-invariant, thus $s_\Gamma(x)=2$. This concludes Step~2.

 \medskip

\noindent{\bf{Step 3}}.-- To conclude, we apply Theorem~B  and Remark~\ref{rem:orbit_closure} from Section~\ref{par:general_finiteness_result}. These results show that in the complement of the finite set $\Per_\Gamma(\Z_p)$, every orbit closure is open. Being open, the orbit closures intersect if and only if they coincide, so they form a partition of $X(\VR)$, as desired. 

%
%

\section{Stationary measures}

The goal of this section is to prove Proposition~\ref{pro:stationary_measure_Isom}. This proposition completes the proof of Theorem~A.

\subsection{Compact groups}\label{par:compact_groups}

Let $G$ be a compact topological group, and let $\omega_G$ be its Haar measure, normalized so that $\omega_G(G)=1$. Let $S$ be a closed subgroup of $G$, and  let $q\colon G\to G/S$ be the quotient map. The group $G$ acts by left translations on $G/S$, and the push-forward measure $\omega_{G/S}=q_*\omega_G$ is a $G$-invariant probability measure on $G/S$. Now, let $\mu$ be a probability measure on $G$ whose support generates a dense subgroup of $G$. If $\nu$ is a $\mu$-stationary measure on $G/S$, then it is automatically $G$-invariant (this follows from the maximum principle for continuous functions on $G$, see also~\cite[Thm. 3.5]{furstenberg:stiffness}), and the uniqueness of the Haar measure implies that $\nu$ coincides with $\omega_{G/S}$.

\begin{rem} Let $\mu$ be a probability measure on $G$ such that (i) the subgroup generated by the support $\Supp(\mu)$ is dense in $G$, and (ii) $\Supp(\mu)$ is not contained in a coset of a closed normal subgroup of $G$. Then, the convolutions $\mu^{\star n}$ converge towards $\omega_G$ as $n$ goes to $+\infty$ (see~\cite[Main Thm. 3.3.5]{stromberg:1960}). Both (i) and (ii) are necessary, but (ii) can be dropped if one is interested only in the classification of stationary measures. \end{rem}

\subsection{Isometry groups}  Let $\bfK$ be a non-archimedian local field. 
As above, we endow the affine space $\A^N_\bfK$ with the distance given by the sup-norm: 
\begin{equation}
\dist(x,y)=\max_{i=1, \ldots, N}\av{x_i-y_i}.
\end{equation}
Let $V\subset \A^N$ be a subvariety defined over $\VR$. If $f$ is an element of $\Aut(V_\VR)$, then 
$f$ is the restriction to $V$ of an endomorphim $\tilde{f}$ of $\A^N$ defined over $\VR$. As explained in~\cite[\S{2.1.2}]{CantatXie2018}, $f$ induces a $1$-Lipschitz homeomorphism of $V(\VR)$ with respect to $\dist(\cdot, \cdot)$. Since $f^{-1}$ is also $1$-Lipschitz, $f$ acts by isometry on $V(\VR)$. This provides a
homomorphism 
\begin{equation}
\Aut(V_\VR)\to \Isom(V(\VR)).
\end{equation}
Moreover, with the topology induced by uniform convergence, $\Isom(V(\VR))$ is a compact group. 
 
\begin{pro}\label{pro:stationary_measure_Isom}
Let $\bfK$ be a non-archimedian local field. Let $V$ be an affine variety defined over $\VR$.
Let $\Gamma$ be a subgroup of $\Aut(V_\VR)$ and let $G$ be the closure of $\Gamma$ in $\Isom(V(\VR))$. Let $\mu$ be a probability measure on $G$ whose support generates a dense subgroup of $G$. Then, every orbit closure $\overline{\Gamma(x)}\subset V(\VR)$ supports a unique $\mu$-stationary probability measure. This measure is the unique $G$-invariant probability measure on $\overline{\Gamma(x)}$.
\end{pro}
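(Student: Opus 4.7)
The plan is to exploit the compactness of $\Isom(V(\VR))$ and reduce the proposition to the Haar-theoretic uniqueness of invariant measures on compact homogeneous spaces, as sketched in Section~\ref{par:compact_groups}. First, $V(\VR)$ is a compact ultrametric space, every element of $\Aut(V_\VR)$ acts on it as an isometry (as recalled just before the statement), and $\Isom(V(\VR))$ equipped with the uniform topology is itself compact. Hence its closed subgroup $G$ is a compact topological group. Since $\Gamma$ is dense in $G$ and the orbit map $g\mapsto gx$ is continuous, one has
\begin{equation}
\overline{\Gamma(x)}=\overline{Gx}=Gx,
\end{equation}
the last equality holding because $Gx$ is the continuous image of the compact set $G$. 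Setting $S=\{g\in G\;;\;gx=x\}$, a closed subgroup of $G$, the map $gS\mapsto gx$ is a $G$-equivariant homeomorphism $G/S\to Gx$, so the orbit closure is a compact homogeneous space for $G$.

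Second, I would transport the Haar picture from $G/S$ to $Gx$. The normalized Haar measure $\omega_G$ projects under the quotient map $G\to G/S$ to a $G$-invariant probability measure $\omega_{G/S}$, and its image on $Gx$ is $G$-invariant, in particular $\mu$-stationary. This produces a $\mu$-stationary measure supported on $\overline{\Gamma(x)}$. Uniqueness of a $G$-invariant probability measure on the compact homogeneous space $G/S$ is classical.

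Third, and this is the key point, I would argue that any $\mu$-stationary probability measure $\nu$ on $Gx$ is automatically $G$-invariant. Stationarity reads $\mu\star\nu=\nu$, and iterating gives $\mu^{\star n}\star\nu=\nu$ for every $n\geq 1$. The Ces\`aro averages $\sigma_N=\frac{1}{N}\sum_{n=1}^N\mu^{\star n}$ are probability measures on the compact group $G$; by weak-$\star$ compactness, some subsequence converges to a probability measure $\sigma$ on $G$ satisfying $\sigma\star\nu=\nu$ and $\mu\star\sigma=\sigma\star\mu=\sigma$. The latter identity forces $\sigma$ to be left-invariant under convolution by every element of the closed semigroup generated by $\Supp(\mu)$, which in a compact group coincides with the closed subgroup generated by $\Supp(\mu)$; by hypothesis this subgroup is all of $G$. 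Hence $\sigma=\omega_G$, so $\nu=\omega_G\star\nu$ is $G$-invariant, and therefore $\nu$ coincides with the unique $G$-invariant probability measure on $Gx$ constructed in the previous paragraph.

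The main obstacle is really only the last step: showing that a $\mu$-stationary measure on a $G$-space is $G$-invariant when $\Supp(\mu)$ generates a dense subgroup of the compact group $G$. This is essentially the compact-group instance of Furstenberg's maximum principle, already invoked in Section~\ref{par:compact_groups}, and its verification reduces to identifying weak-$\star$ limits of $\sigma_N$ as two-sided convolution idempotents on $G$, forcing them to equal $\omega_G$.
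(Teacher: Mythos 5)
Your proof is correct and follows the same strategy as the paper's: identify $\overline{\Gamma(x)}$ with the compact homogeneous space $G/S_x$, push forward the Haar measure $\omega_G$ to get a $G$-invariant measure, and then argue that every $\mu$-stationary measure on a compact $G$-space is automatically $G$-invariant when $\Supp(\mu)$ generates a dense subgroup. The only difference is in how that last auxiliary fact is justified: the paper appeals to the maximum principle for continuous $\mu$-harmonic functions (citing Furstenberg), while you give the classical Ces\`aro-average argument, producing a weak-$\star$ limit $\sigma$ of $\sigma_N=\frac1N\sum_{n\le N}\mu^{\star n}$ with $\sigma\star\nu=\nu$. One small compression there: the phrase ``the latter identity forces $\sigma$ to be left-invariant under convolution by every element of the closed semigroup generated by $\Supp(\mu)$'' deserves a line of justification --- from $\mu\star\sigma=\sigma\star\mu=\sigma$ you first deduce $\sigma\star\mu^{\star n}=\sigma$ for all $n$, hence $\sigma\star\sigma=\sigma$, i.e.\ $\sigma$ is an idempotent probability measure; by the Kawada--It\^o/Wendel theorem, such a $\sigma$ is the normalized Haar measure $\omega_H$ of a compact subgroup $H\le G$; then $\mu\star\omega_H=\omega_H$ forces $\Supp(\mu)\subset H$, so $H=G$ by the density hypothesis. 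You note this in your closing paragraph, so no gap --- just worth writing the idempotent step explicitly rather than passing it off as immediate.
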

\begin{proof}
The orbit closure $\overline{\Gamma(x)}$ is equal to $G(x)$ and can be identified (as a topological $G$-space) to $G/S_x$, where $S_x\subset G$ is the stabilizer of $x$ in $G$. Let $\omega_x$ be the push-forward of $\omega_G$ on $G(x)$ by the map $g\mapsto g(x)$. Then, the result follows from Section~\ref{par:compact_groups}.
\end{proof}

As an example, consider a countable group $\Gamma \subset\Aut(V_\VR)$ and a probability measure $\mu=\sum_{f\in \Gamma}\mu(f)\delta_f$ such that $\set{ f\in \Gamma\; ; \; \mu(f)\neq 0}$ generates $\Gamma$(\footnote{The reason for choosing $\mu$ as a probability measure on $G$ (rather than on $\Gamma$) in Proposition~\ref{pro:stationary_measure_Isom} is because otherwise we should introduce a $\sigma$-algebra on $\Aut(V_\VR)$ for which the map $f\in \Aut(V_\VR)\mapsto f_{\vert V(\VR)}\in \Isom(V(\VR))$ is measurable.}).  In the example of Markov surfaces, $\mu$ will be symmetric, with finite support.

\section{The Markov surfaces}\label{par:markov_surfaces}

In this section, we study the family of surfaces $S_{A,B,C,D}\subset \A^3$ defined by the equation 
\begin{equation}
\vx^2+\vy^2+\vz^2+\vx \vy \vz = A\vx + B\vy +C\vz+D.
\end{equation}
Here, we suppose that the parameters $A$, $B$, $C$, and $D$ are in the valuation ring $\VR$ of a $p$-adic local field $\bfK$. For simplicity, we denote $S_{A,B,C,D}$ by $S$. 
The group $\Gamma$ that we shall study is the one generated by the Vieta involutions $s_1(\vx, \vy, \vz)=(-\vx+A-\vy\vz, \vy, \vz)$, $s_2(\vx, \vy, \vz)=(\vx, - \vy+B-\vz\vx, \vz)$, and $s_3(\vx, \vy, \vz)=(\vx, \vy, -\vz+C-\vx\vy)$. It is a finite index subgroup of $\Aut(S)$, defined over the ring $\Z[A,B,C,D]$, and hence on $\VR$. 

The surface $S$, if smooth, supports a regular $2$-form $\Omega$, defined locally by 
\begin{equation}
\Omega = \frac{d\vx\wedge d\vy}{2\vz-C+\vx\vy}=\frac{d\vy\wedge d\vz}{2\vx-A+\vy\vz}=\frac{d\vz\wedge d\vx}{2\vy-B+\vz\vx}.
\end{equation}
By $p$-adic integration, this gives a measure on $S(\VR)$ (see Igusa's book~\cite[Chap. 7.4]{igusa}, or~\cite{popa:notes}). If $U$ is a non-empty open subset of $S(\VR)$ (with respect to the $p$-adic topology), we shall refer to the measure 
\begin{equation}
\frac{1}{\int_U\Omega }\Omega_{\vert U}
\end{equation}
as the {\bf{normalized symplectic measure}} on $U$. 

\subsection{Parabolic automorphisms and bounded orbits}  The composition $f=s_1\circ s_2$ is an automorphism that preserves the fibration defined by $\pi_\vz\colon (\vx, \vy, \vz)\mapsto \vz$. If we cut $S$ by a horizontal 
plane $\set{\vz=z}$, for some $z\in \bfK$, we get a conic $C_z$, with the equation 
\begin{equation}
\vx^2+\vy^2+z \vx \vy  = A\vx + B\vy +(Cz+D-z^2).
\end{equation}
The action of $f$ on this conic section is given by the restriction of an affine map $A_z(\vx,\vy)=M_z(\vx,\vy)+T_z$ where 
\begin{equation}
M_z=\left( \begin{array}{cc} z^2-1 & z \\ -z & -1 \end{array}\right) \quad {\text{and}} \quad T_z=\left( \begin{array}{c} A-Bz  \\ B \end{array}\right).
\end{equation}
The determinant of $M_z$ is $1$, its trace is $z^2-2$, so its eigenvalues are $\alpha$ and $1/\alpha$ 
with $\alpha+1/\alpha = z^2-2$. Extracting a root, we get $2\alpha(z)=(z^2-2)\pm z\sqrt{z^2-4}$. Thus, 
over a quadratic extension of $\bfK$, $A_z$ is conjugate to the linear map $(\vx,\vy)\mapsto (\alpha(z)\vx, \vy/\alpha(z))$. This implies that $\deg(f^n)$ grows like $2n$ as $n$ goes to $+\infty$, so $f$ is a Jonquières twist. Also, if $\av{z}>1$, we see that one of the two eigenvalues, say $\alpha(z)$, satisfies $\av{\alpha(z)}>1$.

 The next lemma follows from a direct computation. 

\begin{lem}
If $z^2\neq 4$, the affine map $A_z$ has a fixed point in $\bfK^2$; this fixed point is on $C_z$ (equivalently on $S$) if and only if $z=0$, in which case $A_z\circ A_z=\Id$.

If $z=2\epsilon$, with $\epsilon = \pm 1$, then $A_z$ has either no fixed point in $\bfK^2$, or a line of fixed point defined by the equation $\epsilon x=y+B/2$; this last case occurs if and only if $\epsilon A+B=0$.
\end{lem}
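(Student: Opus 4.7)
The proof is a direct linear-algebra exercise on the affine map $A_z(\vx,\vy)=M_z(\vx,\vy)^T+T_z$, organized around the dichotomy provided by $\det(I-M_z)=4-z^2$. My plan is to solve the fixed-point system $(I-M_z)(\vx,\vy)^T=T_z$ in each regime and, in the non-degenerate case, to substitute the unique solution into the defining equation of $C_z$.

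When $z^2\neq 4$, the matrix $I-M_z$ is invertible, so the system has a unique solution $p(z)\in\bfK^2$. I would compute $p(z)$ by Cramer's rule as a rational vector with denominator $4-z^2$, substitute it into the polynomial $\vx^2+\vy^2+z\vx\vy-A\vx-B\vy-(Cz+D-z^2)$ cutting out $C_z$, and expand. Clearing the denominator, the resulting polynomial identity in $z$ should factor in a way that isolates the distinguished value $z=0$, thanks to the symmetric shape of the quadratic part of the Markov equation (which makes the contribution of $p(z)$ collapse into terms proportional to $A^2+B^2-ABz$). The auxiliary claim $A_z\circ A_z=\Id$ at $z=0$ is then immediate: $M_0=-\Id$ makes $A_0$ a point symmetry (a translation composed with negation), which is manifestly involutive.

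When $z=2\epsilon$ with $\epsilon=\pm 1$, the matrix $I-M_z$ has rank one: its two rows are proportional, and both its kernel and its column space coincide as a single one-dimensional subspace of $\bfK^2$. By the standard consistency criterion for linear systems, the fixed-point system is solvable iff $T_z$ lies in this column space, which translates into a single linear relation between the parameters $A$ and $B$; if this relation fails, there is no fixed point, while if it holds the solution set is a line parallel to the kernel, whose equation can be read off directly from one row of the reduced system. The main obstacle is the arithmetic bookkeeping of Case~1: verifying that the substitution of $p(z)$ into the equation of $C_z$ simplifies to the claimed iff-condition and reconciling signs between the computation and the statement. Beyond this, no structural idea is needed past the dichotomy imposed by the determinant $4-z^2$.
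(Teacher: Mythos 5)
Your structural plan---organize everything around $\det(I-M_z)=4-z^2$, solve by Cramer's rule in the invertible case, and use the rank-one consistency criterion at $z=2\epsilon$---is exactly what the paper's terse ``direct computation'' indicates, and the quantity $A^2+B^2-ABz$ you anticipate really is what the quadratic part collapses to. But the step you describe as ``verifying that the substitution simplifies to the claimed iff-condition'' will not go through. Carrying out the substitution gives
\[
Q\bigl(p(z)\bigr)=-\frac{A^2+B^2-ABz}{4-z^2}-\bigl(Cz+D-z^2\bigr),
\]
so the fixed point lies on $C_z$ precisely when the quartic $(Cz+D-z^2)(4-z^2)+A^2+B^2-ABz$ vanishes. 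Its constant term is $A^2+B^2+4D$, so $z=0$ is generically \emph{not} a root, and the quartic can vanish at nonzero $z$ (take $A=B=D=0$, $C=1$: then $Q(p(z))=z^2-z$, which also vanishes at $z=1$). The ``if and only if $z=0$'' in the printed statement therefore cannot be certified by the computation you set up, and you should be prepared to notice this when your expansion refuses to be divisible by $z$.

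What survives, and what the paper actually uses in the sentence following the lemma, is only the case $|z|>1$: there $|4-z^2|=|z|^2$ makes the first summand of $Q(p(z))$ have norm at most $1/|z|<1$, while $|Cz+D-z^2|=|z|^2>1$, so $|Q(p(z))|=|z|^2\ne 0$ by the ultrametric inequality and $p(z)\notin C_z$. A smaller but symptomatic problem is in your Case~2: the rank-one criterion is the right tool, but completing it gives solvability iff $\epsilon A=B$, with fixed line $\epsilon x+y=B/2$, not $\epsilon A+B=0$ and $\epsilon x=y+B/2$. So the ``reconciling of signs'' you anticipate is not a mere reconciliation---both sign discrepancies are real, and together with the $z=0$ issue they signal that the statement as printed should be read critically before you try to prove it.
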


Thus, if $(x,y,z)$ is a point of $S(\bfK)$ with $\av{z}>1$, the eigenvalues of $M_z$ satisfy 
$\av{\alpha(z)}>1>\av{1/\alpha(z)}$, and the unique fixed point of $A_f$ in $\bfK$ is outside $C_z$. This implies that the orbit of $(x,y,z)$ under the action of $f^\Z$ is not bounded. 

\begin{pro}
A point $(x,y,z)\in S(\bfK)$ has a bounded $\Gamma$-orbit if and only if $(x,y,z)\in S(\VR)$. 
Thus, $S(\VR)$ is the unique maximal, compact, invariant subset of $S(\bfK)$.
\end{pro}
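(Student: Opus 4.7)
The plan is to establish both containments via the two natural directions, using the parabolic map $f=s_1\circ s_2$ and the symmetry of the Markov equation under permutations of $(\vx,\vy,\vz)$ (and simultaneously of $(A,B,C)$).

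For the easy direction, suppose $(x,y,z)\in S(\VR)$. Each Vieta involution $s_i$ is defined by a formula with coefficients in $\Z[A,B,C,D]\subset \VR$, so $s_i$ maps $S(\VR)\to S(\VR)$. Hence $\Gamma$ stabilizes the compact set $S(\VR)$, and in particular every $\Gamma$-orbit in $S(\VR)$ is bounded.

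For the converse, suppose $(x,y,z)\in S(\bfK)\setminus S(\VR)$, so at least one coordinate has absolute value $>1$. The Markov equation and the generators $s_1,s_2,s_3$ are symmetric under any permutation of $(\vx,\vy,\vz)$ coupled with the corresponding permutation of $(A,B,C)$. Since $\Gamma$ is generated by the three Vieta involutions, we may therefore assume $\av{z}>1$. Then the analysis carried out just before the proposition applies to $f=s_1\circ s_2$: on the fiber $C_z$, $f$ acts as the affine map $A_z$ whose linear part $M_z$ has eigenvalues $\alpha(z),\alpha(z)^{-1}$ with $\av{\alpha(z)}>1$, and whose unique fixed point in $\bfK^2$ (which exists since $z^2\neq 4$) lies \emph{off} the conic $C_z$ because the condition $z=0$ fails. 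Passing to an eigenbasis over a quadratic extension $\bfL/\bfK$ and translating the fixed point of $A_z$ to the origin, $A_z$ becomes $(u,v)\mapsto (\alpha(z)u,\alpha(z)^{-1}v)$; our point has $(u,v)\neq (0,0)$, so either $u\neq 0$ (forcing $\av{f^n(u,v)}\to\infty$ as $n\to +\infty$) or $v\neq 0$ (forcing divergence as $n\to -\infty$). In either case the $\langle f\rangle$-orbit, and a fortiori the $\Gamma$-orbit, of $(x,y,z)$ is unbounded.

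Combining the two directions shows that the set of points with bounded $\Gamma$-orbit is exactly $S(\VR)$. The second assertion is then immediate: any compact $\Gamma$-invariant subset $K\subset S(\bfK)$ consists entirely of points with bounded orbit, hence $K\subset S(\VR)$; conversely $S(\VR)$ is itself compact and $\Gamma$-invariant. I do not anticipate a serious obstacle here, since the eigenvalue computation has already been done; the only point requiring care is to verify that the permutation symmetry of the equation genuinely lets one reduce to the case $\av{z}>1$ (rather than merely, say, $\av{z}\geq\max(\av{x},\av{y})$), and to check that translating by the fixed point of $A_z$ is legitimate over a quadratic extension where that fixed point lives---both of which are routine.
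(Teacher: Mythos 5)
Your proof is correct and follows essentially the same route as the paper: the easy inclusion is invariance of the compact set $S(\VR)$ under the Vieta involutions, and the converse reduces (by the coordinate-and-parameter permutation symmetry you invoke, which is equivalent to what the paper does by switching from $f=s_1\circ s_2$ to $s_2\circ s_3$ or $s_3\circ s_1$) to the case $\av{z}>1$, where the eigenvalue computation for $A_z$ preceding the proposition yields unboundedness. Your explicit verification via the eigenbasis and the $\bfK$-rational fixed point of $A_z$, as well as the deduction of the final sentence, are all sound.
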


\begin{proof}
The compact set $S(\VR)$ is $\Gamma$-invariant. If $(x,y,z)$ is not in $S(\VR)$, then one of the coordinates has an absolute value $>1$. If $\av{z}>1$, then we know that $f^\Z(x,y,z)$ is not bounded. 
Otherwise, we may apply $s_3\circ s_1$ or $s_2\circ s_3$ to conclude in the same way. 
\end{proof}

\subsection{General parameters}\label{par:markov_general_parameter} The finite orbits of $\Gamma$ in $S_{A,B,C,D}(\bfKbar)$ have been classified in~\cite{Lisovyy-Tykhyy} (see also \cite[\S{3.1}]{cantat-dupont-martinbaillon}). For a general parameter $(A,B,C,D)$, every $\Gamma$-orbit in $S=S_{A,B,C,D}$
is infinite. 

\smallskip

\begin{thm-E}
Assume that $A$, $B$, $C$, and $D$ are in $\Z_p$, and that $S(\Z_p)$ does not contain any finite orbit. 
Let $\mu$ be a probability measure on $\set{s_1,s_2,s_3}$ with $\mu(s_1)\mu(s_2)\mu(s_3)>0$. 
Let $\nu$ be an ergodic $\mu$-stationary probability measure on $S(\Q_p)$. Then, 
\begin{enumerate}[\rm(1)]
\item $\nu$ is supported on an orbit closure $\overline{\Gamma(x)}$, for some $x\in S(\Z_p)$;
\item this orbit closure is a clopen subset;
\item $\nu$ is equal to the normalized symplectic measure on $\overline{\Gamma(x)}$.
\end{enumerate}
Moreover, there are only finitely many orbit closures in $S(\Z_p)$, and the convex set of $\mu$-stationary measures on $S(\Q_p)$ is a finite dimensional simplex.
\end{thm-E}


\begin{proof}
According to~\cite[Pro. 6.1, Rem. 6.2]{cantat-dupont-martinbaillon}, $\nu$ is supported on a compact invariant subset. 
Thus, it is supported in $S(\VR)$, and by Theorem~A, ergodicity, and Proposition~\ref{pro:stationary_measure_Isom}, it is the unique invariant measure on an orbit closure $\overline{\Gamma(x)}$. But $\Omega_{\overline{\Gamma(x)}}$ is such an invariant measure, thus $\nu=\Omega_{\overline{\Gamma(x)}}$.
\end{proof}

\subsection{Examples}\label{par:examples_markov} Assume that $S(\VR)$ contains a finite orbit $F$, and that $F$ is not an isolated subset in $S(\VR)$. Since the action of $\Gamma$ is isometric, there are infinitely orbit closures accumulating to $F$, and this implies that $S(\VR)$ supports infinitely many ergodic stationary measures. 

For instance, the origin $o=(0,0,0)$ is a singularity of $S_o=S_{0,0,0,0}$, it is not isolated in $S_o(\Z_p)$, and it is fixed by $\Gamma$. On the other hand, if we focus on the subset $S'=\set{(x,y,z)\in S_o\; ; \; \dist(o, (x,y,z))=1}$, then the work of Bourgain, Gamburd and Sarnak and of Chen shows that, for $p$ large, the group generated by $\Gamma$ and the permutations of coordinates act topologically transitively on $S'$: 
every orbit in $S'$ is dense in $S'$ (see~\cite{Bourgain-Gamburd-Sarnak:Markov, Chen:Markoff}). Thus, $S'$ supports a unique stationary measure.

\section{An open question} 

\subsection{Simple examples} In the following examples, $f$ and $g$ are automorphisms of $\A^2$, both defined over $\Z$, and $\Gamma=\langle f, g\rangle$. In the first set of examples, $\Gamma$ has only finitely many orbits in $\A^2(\Z)$, but the third example has infinitely many orbits, and the orbits of $\Gamma$ in $\A^2(\Z)$ for the example in \S~\ref{par:henon_pure} are ``sparsed'' in terms of height. In each case, we are interested in the number of orbits modulo $p$ as $p$ increases.

\subsubsection{} This first example can be handled directly, without any reference to Theorem~A. Let $t$ be a positive integer. Let $Q(\vx)$ be a polynomial function of degree $d\geq 2$ with coefficients in $\Z$. Then, we take
 $g(\vx, \vy)=(\vy+Q(\vx), \vx)$  and $f(\vx,\vy)=(\vx+t,\vy)$. 
Suppose that $p$ does not divide $t$. Let $z=(x,y)$ be an element of $\A^2(\Z_p)$ and let $U$ be the closure of the $\Gamma$-orbit of $z$. Since $t\Z$ is dense in $\Z_p$, $U$ is the preimage of a subset $U_0$ of 
$\A^1(\Z_p)$ under the second projection $\pi(\vx,\vy)=\vy$. On the other hand, 
$g\circ f\circ g^{-1}(\vx,\vy)=(x+Q(\vy+t)-Q(\vy), \vy+t)$ 
is in $\Gamma$. Thus $\pi(U)$ is invariant under the translation $\vy\mapsto \vy+t$. This implies that $U=\A^2(\Z_p)$, so every $\Gamma$-orbit is dense in $\A^2(\Z_p)$. 
On the other hand, if $t$ and $Q$ are divisible by $p$, then the action of $\Gamma$ on $\A^2(\Z/p\Z)$ has at least $p(p+1)/2$ orbits, and no orbit is dense in $\A^2(\Z_p)$. 
This argument also shows that when $t=\pm 1$, then $\Gamma$ acts transtively on $\A^2(\Z)$.

\subsubsection{} Consider a translation $f(\vx,\vy)=(\vx+t,\vy)$, as in the previous example, and any loxodromic 
automorphism $g\in \Aut(\A^2_\Z)$.  If $p$ does not divide $t$, every orbit of $\Gamma$ modulo $p$ is the preimage of a subset of $\A^1(\Z/p\Z)$ under the second projection. The automorphism $h=g\circ f\circ g^{-1}$ acts by translations along the fibers of the function $g_2:=g^*(\vy)$ and every orbit of $h$ contains $p$ points. Let $d$ be the degree of $g_2$ with respect to the $\vy$ variable. Then, one can easily verify that $\Gamma$ has at most $d$ orbits modulo $p$.




\subsubsection{} Another interesting case is when $f\in \GL_2(\Z)$ is a linear diagonalizable automorphism and $g\in \Aut(\A^2_\Z)$ is any loxodromic automorphism. For instance, one can take $f(\vx,\vy)=(2\vx+\vy, \vx+\vy)$. Numerical simulations suggest again that the number of orbits of $\Gamma$ modulo $p$ is uniformly bounded, independently of $p$. It would be interesting to adapt the techniques of~\cite{Bourgain-Gamburd-Sarnak:Markov} to this example. 

\subsubsection{}\label{par:henon_pure}
\iftrue
    Let $h_0$ and $g$ be the elements of $\Aut(\A^2_\Z)$ defined by
    \begin{align*}
        h_0(\vx,\vy) & = (2\vx+\vy,\vx+\vy), \\
        g(\vx,\vy) &= (\vy+\vx^2+5,-\vx).
    \end{align*}
    Let $p<10^4$ be a prime. Then, our numerical simulations show that 
    if $p\neq 5$, the group $\langle g, h_0\circ g\circ h_0^{-1}\rangle$ acts transitively on $\A^2(\Z/p\Z)$; 
    if $p=5$, the origin is fixed and the group acts transitively on $\A^2(\Z/p\Z)\setminus\{(0,0\}$.
    On the other hand, each orbit of $g$ alone contains at most $c(g) p\log(p)$ points, for $p$ in this range, with $c(g)=2$ (for other Hénon maps, $c(g)$ can be larger).
    We have no explanation yet for these phenomena, and we do not know whether the range $2\leq p \leq 10^4$ is big enough to derive precise conjectures. 

\begin{figure}[h]\label{fig:3Henon}
\includegraphics[width=1.0\textwidth]{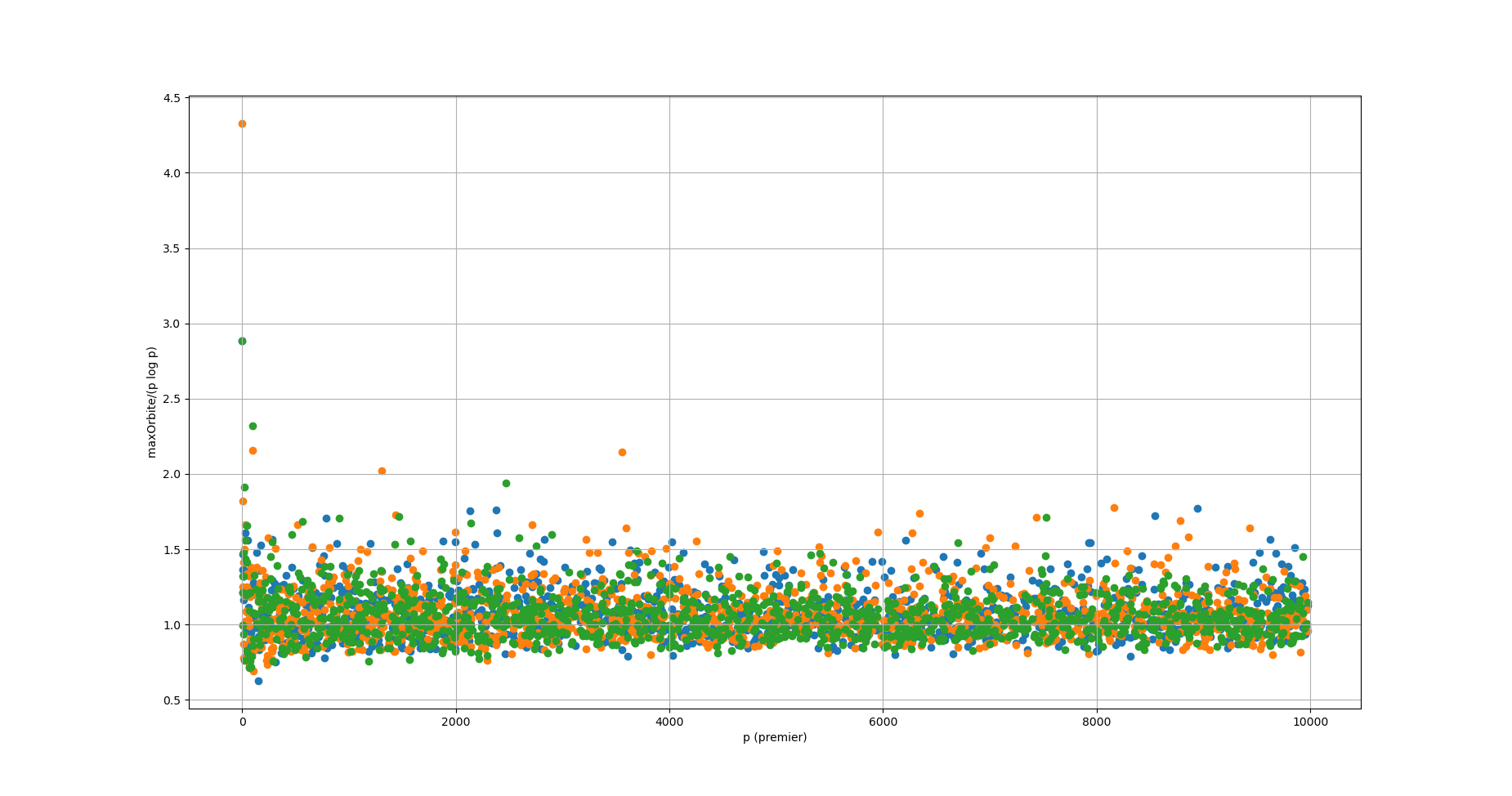}
\caption{This picture represents, for a given $p$, the maximal length of an orbit of $g$ modulo $p$ divided by $p\log(p)$. Blue  points correspond to $g_1(\vx,\vy)=(\vy+\vx^2+5,-\vx)$, orange points to $g_2(\vx,\vy)=(-\vy, \vx+\vy^3+2)$, and green points to $g_3=g_2\circ h_0\circ g_1$ where $h_0(\vx,\vy) = (2\vx+\vy,\vx+\vy)$ is as above. } 
\end{figure}
    
\else
    Let $f$ and $g$ be the elements of $\Aut(\A^2_\Z)$ defined by
    \begin{align*}
        f(\vx,\vy) & = (2\vx+\vy,\vx+\vy), \\
        g(\vx,\vy) &= (\vy+\vx^2+5,-\vx).
    \end{align*}
    Let $p<10^4$ be a prime. Then, our numerical simulations show that 
    if $p\neq 5$, the group $\langle g, f\circ g\circ f^{-1}\rangle$ acts transitively on $\A^2(\Z/p\Z)$; 
    if $p=5$, the origin is fixed and the group acts transitively on $\A^2(\Z/p\Z)\setminus\{(0,0\}$.
\fi

%
%
\subsection{Uniform bounds}

Though our numerical simulations concern only small primes (say $p <10^4$), 
the examples from the previous section suggest the following question. 

\begin{que}
{\sl{Let $\Gamma$ be a non-elementary subgroup of $\Aut(\A^2_\Z)$. Is the number of orbit closures of $\Gamma$ in $\A^2(\Z_p)$ bounded by a uniform constant $b(\Gamma)$ that does not depend on $p$?}}
\end{que}

This question is already interesting when $\Gamma$ contains an element $f$ of infinite order that is not loxodromic (as in Theorem~A). 
This problem is also related to the following one: {\sl{under which conditions can we ensure that transitivity modulo $p$ (resp. some $p^k$) implies topological transitivity over $\Z_p$ (i.e. modulo $p^l$ for all $l\geq 1$)?}} 



\appendix
\section{A proof of the Bell-Poonen Theorem}

In this appendix, we present an alternative proof of the Bell-Poonen theorem (see Theorem \ref{thm:bell_poonen} above); we refer to~\cite{Bell, Bell:corrigendum, Poonen2014} for the orginial proofs. The method naturally provides an explicit formula for the vector field $\Theta_f$ defined in Equation~\eqref{eqn:champ-vectoriel-definition}: see Equation~\eqref{eqn:formule-de-champ-vectoriel} below.

\subsection{Functional Analytic Ingredients}
We use the notations $R=\VR$, $\U=R^m$, and $R\langle\vx\rangle$ from Section \ref{par:analytic_diffeomorphism_polydisk}. On $\U$, we use the $\ell^\infty$-norm $|(x_1,\ldots,x_m)|=\max_{1\leq i\leq m}|x_i|$, and on the ring $R\langle\vx\rangle$, we use the Gauss norm $\parallel{}\cdot{}\parallel$ 
defined by
\begin{equation}
\left\|g\right\|=\sup_I|a_I|
\end{equation}
for every $g=\sum_Ia_I\vx^I\in R\langle\vx\rangle$. 
If we view $g\in R\langle\vx\rangle$ as a Tate analytic function $g\colon\U\to R$, the norm is an \emph{upper bound} of the function values: i.e., we have $|g(z)|\leq\|g\|$ for all $z\in\U$ \cite[Prop. 5.1.4/2]{bosch-guntzer-remmert}. The function values may not attain the Gauss norm: see Remark \ref{rem:ceci-nest-pas-une-norme} above. 
Note that our definition of $f\equiv\Id\pmod{p^c}$ in \S{\ref{par:reduction_and_bell_poonen}} asserts $\|f-\Id\|\leq p^{-c}$.

By this norm, $R\langle\vx\rangle$ is a complete (ultra)metric space \cite[Prop. 1.4.1/3]{bosch-guntzer-remmert}. Although $R$ is not a field, we extend the notion of (faithfully) normed $R$-modules from \cite[Def. 2.1.1/1]{bosch-guntzer-remmert} and say that $R\langle\vx\rangle$ is an $R$-Banach space with respect to the Gauss norm $\parallel{}\cdot{}\parallel$. 
We define the norm on $R\langle\vx\rangle^m$ by the supremum of the Gauss norms on each component.

%
%
%
%

\subsection{Construction of the Flow}
Let $A$ denote the $R$-algebra of bounded linear endomorphisms $R\langle\vx\rangle^m\to R\langle\vx\rangle^m$, with norm given by the operator norm $\|{}\cdot{}\|_{op}$. This algebra is \emph{normal} in the sense that within the extension of scalars $A\otimes_R\bfK$, the ball of radius $1$ centered at the origin is precisely $A$.

Let $f\colon\U\to\U$ be a Tate analytic map such that $f\equiv\Id\pmod{p^c}$ for some real number $c>1/(p-1)$. By precomposition, we obtain an operator 
\begin{equation}
T_f\colon R\langle\vx\rangle^m\to R\langle\vx\rangle^m, \quad g\mapsto g\circ f. 
\end{equation}
By \cite[Lem. 2.1(3)]{CantatXie2018}, denoting by $\Idop$ the identity operator on $R\langle\vx\rangle^m$, 
we have 
\begin{equation}
\|T_f-\Idop\|_{op} \leq \|f-\Id \| \leq p^{-c}.
\end{equation}

In what follows, we denote by $\exp$ and $\log$ the usual (formal) power series 
\begin{align}
\exp(\mathbf{a}) &= \sum_{k=0}^\infty\frac{\mathbf{a}^k}{k!}, \label{eqn:exp-series}\\
\log(1+\mathbf{a}) &= \sum_{k=1}^\infty\frac{(-1)^{k-1}}{k}\mathbf{a}^k. \label{eqn:log-series}
\end{align}

The proof given in Chapter IV of  \cite{koblitz:p-adic}, and in particular the Proposition on page 81 of this book, generalizes to normal $R$-Banach algebras (with unit denoted by $1$, or by $\Idop$ in the case of endomorphisms of $R\langle\vx\rangle^m$). This establishes the following properties.
\begin{lem}
Let $A$ be a normal $R$-Banach algebra. 
\begin{enumerate}[\rm (1)]
\item The  series $\exp(\mathbf{a})$ converges on the ball $\{a\in A \; ; \;  \|a\|<p^{-1/(p-1)} \}$. 
\item The series $\log(1+\mathbf{a})$ converges on the open unit ball  $\{a\in A \; ; \;  \|a\| < 1 \}$.
\item The exponential function $\exp(\mathbf{a})$ defines a $1$-to-$1$ analytic map from the ball of radius 
$p^{-1/(p-1)}$ centered at the origin to the ball of radius $p^{-1/(p-1)}$ centered at the identity element $\Idop$. Its inverse is given by $\log(\mathbf{a})$. 
\item In these balls, $\exp(a+b)=\exp(a) \exp(b)$ and $\log(ab)=\log(a)+\log(b)$ if $a$ and $b$ commute.
\end{enumerate}
\end{lem}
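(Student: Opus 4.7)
The plan is to follow the classical proof in Chapter IV of \cite{koblitz:p-adic}, namely the Proposition on page~81, and verify that nothing in the scalar case uses more than the ring structure, the submultiplicativity of the norm, the ultrametric triangle inequality, and completeness. The hypothesis that $A$ is \emph{normal} is invoked only at the end, to guarantee that whenever a Cauchy sequence of partial sums has norm bounded by~$1$, its limit (which a priori lies in the extension of scalars $A\otimes_R \bfK$) actually belongs to~$A$.

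The two key $p$-adic estimates are Legendre's formula
\begin{equation*}
v_p(k!)=\frac{k-s_p(k)}{p-1},\qquad \text{so that}\qquad |k!|_p \geq p^{-k/(p-1)},
\end{equation*}
with strict inequality for $k\geq 1$ since $s_p(k)\geq 1$, together with the elementary bound $|k|_p \geq 1/k$ coming from $p^{v_p(k)}\mid k$. For~(1), one then has for every $k\geq 1$
\begin{equation*}
\left\| \frac{\mathbf{a}^k}{k!} \right\| \;\leq\; \|\mathbf{a}\|^k\,p^{(k-1)/(p-1)} \;=\; \|\mathbf{a}\|\cdot\bigl(\|\mathbf{a}\|\,p^{1/(p-1)}\bigr)^{k-1},
\end{equation*}
so when $\|\mathbf{a}\|<p^{-1/(p-1)}$ every term has norm strictly less than one and the norms tend to zero geometrically; by the ultrametric inequality, the partial sums are Cauchy in the unit ball of $A$, hence converge in $A$ by completeness. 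For~(2), the analogous bound $\|\mathbf{a}^k/k\|\leq k\|\mathbf{a}\|^k$ tends to $0$ as soon as $\|\mathbf{a}\|<1$, so the partial sums form a Cauchy sequence and converge in $A\otimes_R\bfK$.

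For~(3), the same estimate shows that on the small ball $B(0,p^{-1/(p-1)})$ the $k=1$ term strictly dominates all higher terms of both series, so by the ultrametric equality $\|\exp(\mathbf{a})-\Idop\|=\|\mathbf{a}\|$ and $\|\log(\Idop+\mathbf{b})\|=\|\mathbf{b}\|$. Hence $\exp$ sends $B(0,p^{-1/(p-1)})$ into $\Idop+B(0,p^{-1/(p-1)})$ and $\log$ sends the latter back to the former. The identities $\exp\circ\log=\log\circ\exp=\Id$ are formal identities of power series with rational coefficients, and the corresponding rearrangements (Cauchy products, substitution of one series into another) transfer to our setting because every series involved converges absolutely in the ultrametric sense, with partial sums in $A$. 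Assertion~(4) follows by the same substitution principle: the classical formal identities $\exp(X+Y)=\exp(X)\exp(Y)$ and $\log((1+X)(1+Y))=\log(1+X)+\log(1+Y)$ in $\Q[[X,Y]]$ specialize to commuting elements of the relevant balls in $A$ because the underlying Cauchy-product rearrangement is always licit in the ultrametric setting.

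The main technical point is the refined estimate underpinning~(3); once it is in hand, normality of $A$ together with completeness reduces everything to formal manipulations of power series that are already standard in the scalar case.
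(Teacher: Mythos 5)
Your plan coincides with what the paper actually does: the paper's entire proof is the citation of Koblitz's Proposition on p.~81 together with the remark that the argument generalizes to normal $R$-Banach algebras, and your write-up is precisely a correct unpacking of that remark. The estimates (Legendre's formula giving $\|\mathbf{a}^k/k!\|\leq\|\mathbf{a}\|\,(\|\mathbf{a}\|p^{1/(p-1)})^{k-1}$ on the small ball, $\|\mathbf{a}^k/k\|\leq k\|\mathbf{a}\|^k$ on the open unit ball), the observation that on the ball of radius $p^{-1/(p-1)}$ the $k=1$ term strictly dominates so that $\|\exp(\mathbf{a})-\Idop\|=\|\mathbf{a}\|$ and $\|\log(\Idop+\mathbf{b})\|=\|\mathbf{b}\|$, and the reduction of the functional identities to formal power-series identities via ultrametric rearrangement are all exactly the content of the Koblitz argument, and you correctly identify that the only extra ingredient is normality of $A$, used to guarantee that each term and each partial sum (having norm $\leq 1$) lies in $A$ rather than merely in $A\otimes_R\bfK$, so completeness of the $R$-Banach space $A$ applies. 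This is essentially the same route as the paper.
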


These basic properties imply that
\begin{equation}
(1+a)^n = \exp(n\log(1+a))
\end{equation}
for any $n\in\N$ and any $a\in A$ with $\|a\|<p^{-1/(p-1)}$. 
In particular, if $\|a\|<p^{-1/(p-1)}$, the map $n\mapsto(1+a)^n$ extends to a Tate analytic map 
\begin{equation}
t\in R\mapsto (1+a)^t :=\exp(t\log(1+a)).
\end{equation}
Since $\|T_f-\Idop\|_{op}\leq p^{-c}$, we can apply this discussion to $a=(T_f-\Idop)\in A$. Doing so, we see that the map $n\mapsto T_f^n$ extends to a Tate analytic map $t\in R\to T_f^t\in A$. Thus, defining
\begin{equation}
\Phi^t = T_f^t(\Id)
\end{equation}
 we obtain a Tate analytic map 
$\Phi\colon R \to R\langle\vx\rangle^m$ that satisfies 
\begin{equation}
\Phi^n=f^n
\end{equation} 
for every $n\in \N$. 
Furthermore, to have $\Phi^t(\vx)\equiv\vx\pmod{p^{c}}$,
we first need to estimate $\exp$ and $\log$:
\begin{lem} \label{lem:estim-des-exp-log}
Let $a\in A$ be an element of a normal $R$-Banach algebra $A$. If $\|a\|<p^{-1/(p-1)}$, then $\|\exp(a)-1\|=\|a\|$ and $\|\log(1+a)\|=\|a\|$. Furthermore, for $t\in R$, we have $\|(1+a)^t-1\|=|t|\cdot\|a\|$.
\end{lem}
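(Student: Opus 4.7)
The plan is to verify the three norm equalities via the strong (ultrametric) triangle inequality, by showing in each case that the linear term of the defining power series dominates all higher-order terms under the hypothesis $\|a\| < p^{-1/(p-1)}$. Throughout, I set $\alpha := -\log_p\|a\|$, so the hypothesis becomes $\alpha > 1/(p-1)$.

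First I would handle $\|\exp(a)-1\| = \|a\|$. Writing $\exp(a) - 1 = a + \sum_{k\geq 2} a^k/k!$ from \eqref{eqn:exp-series}, it suffices to show $\|a^k/k!\| < \|a\|$ for every $k\geq 2$. Since $A$ is normal, $\|a^k/k!\| \leq \|a\|^k \cdot p^{v_p(k!)}$, so the required inequality reduces to $(k-1)\alpha > v_p(k!)$. The Legendre-type formula $v_p(k!) = (k-s_p(k))/(p-1)$, where $s_p(k)$ is the base-$p$ digit sum, gives $v_p(k!) \leq (k-1)/(p-1)$ for $k\geq 2$ since $s_p(k)\geq 1$. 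Combined with the strict bound $\alpha > 1/(p-1)$, one gets $(k-1)\alpha > (k-1)/(p-1) \geq v_p(k!)$, as desired.

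For $\|\log(1+a)\| = \|a\|$ the same strategy applies to the series \eqref{eqn:log-series}: one needs $\|a^k/k\| < \|a\|$ for $k\geq 2$, which is the inequality $(k-1)\alpha > v_p(k)$. The relevant elementary bound here is $v_p(k) \leq (k-1)/(p-1)$ for all $k\geq 2$, which follows from $p^{v_p(k)}\leq k$: if $v_p(k) = m$, then $k\geq p^m$, and hence $(k-1)/(p-1) \geq 1 + p + \cdots + p^{m-1} \geq m$. Combined with $\alpha > 1/(p-1)$ this yields the strict inequality $(k-1)\alpha > v_p(k)$.

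The third equality then follows by combining the first two: setting $b := t\log(1+a)$, the log estimate just established gives $\|b\| = |t|\cdot\|a\|$, and since $|t|\leq 1$ we still have $\|b\| < p^{-1/(p-1)}$; applying the exponential estimate to $b$ yields $\|(1+a)^t - 1\| = \|\exp(b) - 1\| = \|b\| = |t|\cdot\|a\|$. The main (and only minor) obstacle is the careful handling of the equality case $k=p$ in the valuation inequality $v_p(k) \leq (k-1)/(p-1)$, where the bound is tight; this is precisely where the strict hypothesis $\|a\| < p^{-1/(p-1)}$ is needed, as opposed to the weaker $\leq$.
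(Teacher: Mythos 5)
Your proof is correct, but it takes a genuinely different route from the paper. The paper handles the $\exp$ estimate by citation (Gouv\^ea, \S 5.7, Problem 182) and then derives the $\log$ estimate indirectly: since $\mathbf{a}\mapsto\exp(\mathbf{a})-1$ is a norm-preserving self-bijection of the ball of radius $p^{-1/(p-1)}$, its inverse $\mathbf{a}\mapsto\log(1+\mathbf{a})$ must preserve norms too. You instead re-derive the $\exp$ estimate directly from Legendre's formula $v_p(k!)=(k-s_p(k))/(p-1)\leq(k-1)/(p-1)$, and prove the $\log$ estimate by the same term-by-term method, using the elementary bound $v_p(k)\leq(k-1)/(p-1)$ for $k\geq 2$ obtained from $p^{v_p(k)}\leq k$; in each case the strict inequality $\alpha>1/(p-1)$ forces every term of index $k\geq 2$ to have norm strictly below $\|a\|$, and the isosceles property of the ultrametric yields equality. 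Your approach is more self-contained (it re-proves the cited exercise rather than invoking it) and treats the two series symmetrically; the paper's inverse-map argument is slicker for $\log$ once the $\exp$ estimate is granted, and avoids a separate elementary valuation lemma. The final step, $(1+a)^t=\exp(t\log(1+a))$ combined with $\|t\,b\|=|t|\,\|b\|$, is handled identically in both. Your remark about the tight case $k=p$ in $v_p(k)\leq(k-1)/(p-1)$ is accurate and correctly pinpoints where the strict hypothesis $\|a\|<p^{-1/(p-1)}$ is essential.
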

\begin{proof}
For the estimate on $\exp$, see \cite[\S{5.7}, Problem 182]{gouvea:p-adic}, which generalizes to normal $R$-Banach algebras. 
The estimate implies that the self-bijection $\mathbf{a}\mapsto\exp(\mathbf{a})-1$ on the ball of radius $p^{-1/(p-1)}$ centered at the origin preserves the norm. Its inverse $\mathbf{a}\mapsto\log(1+\mathbf{a})$ must have the same property. The last claim follows from $(1+a)^t=\exp(t\log(1+a))$.
\end{proof}
Based on this, we have
\begin{align*}
\|\Phi^t-\Id\| &= \|(T^t_f)(\Id)-\Idop(\Id)\| \\
&\leq \|T^t_f-\Idop\|_{op}\|\Id\| \\
&=|t|\cdot\|T_f-\Idop\|_{op} \\
&\leq p^{-c}.
\end{align*}
This proves the theorem of Bell and Poonen. 

\subsection{Formula of the Vector Field} The proof suggested above gives rise to a formula of the vector field. To see how, first note that, on the operator level,
\[\frac1h(T_f^{h}-\Idop)\longrightarrow \log(\Idop+(T_f-\Idop))\]
as $h\to 0$. This follows from the definition of $T_f^h=\exp(h\cdot\log(\Idop+(T_f-\Idop)))$.

Hence if we differentiate $\vt\mapsto\Phi^\vt(\vx)$ at $\vt=0$, we get 
\begin{align}
\Theta_f(\vx) &=\lim_{h\to 0}\frac1h(\Phi^h-\Phi^0)(\vx) \\
&= \lim_{h\to 0}\left(\frac1h(T_f^h-\Idop)\Id\right)(\vx) \\
&= (\log(\Idop+(T_f-\Idop))\Id)(\vx).
\end{align}
This formula can be further understood as
\begin{align}
\Theta_f(\vx) &= (\log(\Idop+(T_f-\Idop))\Id)(\vx) \\
&= \sum_{k=1}^\infty\frac{(-1)^{k-1}}{k}((T_f-\Idop)^k\Id)(\vx) \\
&= \sum_{k=1}^\infty\frac{(-1)^{k-1}}{k}\sum_{j=0}^k\binom{k}{j}(-1)^{k-j}(T_f^j\Id)(\vx) 
\end{align}
which gives 
\begin{align}
\Theta_f(\vx) &= \sum_{k=1}^\infty\sum_{j=0}^k\frac{(-1)^{j-1}}{k}\binom{k}{j}f^j(\vx).\label{eqn:formule-de-champ-vectoriel}
\end{align}

%
%
%
%

\bibliographystyle{plain}
\bibliography{referencesMarkov}

\end{document}